\documentclass[a4paper,12pt]{amsart}
\usepackage{amsmath,amssymb,amsfonts,amsthm}

\usepackage[alphabetic]{amsrefs}
\usepackage{bm}

\usepackage[dvipdfmx]{graphicx}
\usepackage {seqsplit}
\usepackage{ascmac}
\usepackage[all]{xy}
\usepackage{amsthm}
\usepackage[top=25mm,bottom=25mm,left=25mm,right=25mm]{geometry}
\usepackage{tikz}
\usetikzlibrary{cd,arrows,matrix,backgrounds,shapes,positioning,calc,decorations.markings,decorations.pathmorphing,decorations.pathreplacing}
\tikzset{blackv/.style={circle,fill=black,inner sep=3pt,outer sep=3pt},
         whitev/.style={circle,fill=white,draw=black,inner sep=3pt,outer sep=3pt},
         blabel/.style={circle,draw=black,inner sep=1.5pt,outer sep=0pt},
         redv/.style={circle,fill=red,inner sep=3pt,outer sep=3pt},
         block/.style={draw,rectangle split,rectangle split horizontal,rectangle split parts=#1},
         symbol/.style={
           draw=none,
           every to/.append style={
             edge node={node [sloped, allow upside down, auto=false]{$#1$}}}}
}
\usepackage{multirow}
\newtheorem{theorem}{Theorem}[section]
\usepackage{mathtools,leftidx}

\newtheorem{proposition}[theorem]{Proposition}

\newtheorem{conjecture}[theorem]{Conjecture}
\newtheorem{corollary}[theorem]{Corollary}
\newtheorem{lemma}[theorem]{Lemma}

\theoremstyle{definition}
\newtheorem{remark}[theorem]{Remark}
\newtheorem{example}[theorem]{Example}
\newtheorem{definition}[theorem]{Definition}

\newtheorem{question}[theorem]{Question}

\makeatletter
 
 \@addtoreset{equation}{section}
\makeatother

\def\TT{\mathbb{T}}

\def\ZZ{\mathbb{Z}}
\def\QQ{\mathbb{Q}}

\title[Uniqueness theorem of generalized Markov numbers that are prime powers]{Uniqueness theorem of generalized Markov numbers that are prime powers}
\author{Yasuaki Gyoda}
\author{Shuhei Maruyama}
\keywords{Cohn triple, Cohn tree, generalized Markov equation, Markov number, Uniqueness conjecture}
\subjclass[2020]{11D25, 11J60}
\address{Graduate School of Mathematical Sciences, The University of Tokyo, 3-8-1 Komaba Meguro-ku Tokyo 153-8914, Japan}
\email{gyoda-yasuaki@g.ecc.u-tokyo.ac.jp}
\address{School of Mathematics and Physics, College of Science and Engineering, Kanazawa University, Kakuma-machi Kanazawa-shi Ishikawa 920-1192, Japan}
\email{smaruyama@se.kanazawa-u.ac.jp}
\begin{document}
\begin{abstract}In this paper, we study positive integer solutions to a generalized form of the Markov equation, given as $x^2 + y^2 + z^2 + k(yz + zx + xy) = (3 + 3k)xyz$. This equation extends the classical Markov equation $x^2 + y^2 + z^2 = 3xyz$.  We generalize the concept of Cohn triples for the classical Markov equation to the generalized Markov equations. Using this, we provide a generalization of the uniqueness theorem of Markov numbers that are prime powers.
\end{abstract}
\maketitle
%\tableofcontents
\section{introduction}
\subsection{Backgrounds}
The \emph{Markov equation}, 
\[x^2+y^2+z^2=3xyz,\] was studied by Markov in the late 1870s in the context of the Diophantine approximation \cites{mar1,mar2}. Every positive integer solution (up to order) of this equation, which is called a $\emph{Markov triple}$, can be obtained by applying the $\emph{Vieta jumpings}$
\[(a,b,c)\mapsto \left(a,\dfrac{a^2+b^2}{c},b\right)\quad \text{or} \quad (a,b,c)\mapsto \left(b,\dfrac{b^2+c^2}{a},c\right)\] to $(1,1,1)$ repeatedly. A number appearing in Markov triples is called a \emph{Markov number}. Due to the discoveries of various mathematicians, this combinatorial structure of Markov triples is known today to appear in a variety of theories, including combinatorics, continued fractions, hyperbolic geometry and cluster algebra theory (in detail, see \cite{aig}). For this equation, the following conjecture was proposed by Frobenius in 1913: 

 \begin{conjecture}[the uniqueness conjecture for Markov numbers, \cite{frobenius}]\label{conj:markov}
 For any Markov number $b$, there is a unique Markov triple $(a,b,c)$ up to order such that $\max\{a,b,c\}=b$.
 \end{conjecture}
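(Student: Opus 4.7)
My plan is to prove Conjecture~\ref{conj:markov} by reducing it to an injectivity statement for a map defined on the Markov tree. The infinite binary tree of Markov triples is rooted at $(1,1,1)$; the two Vieta jumps generate its edges, and every Markov triple with maximum $b$ sits at a unique vertex. What must be shown is that two distinct vertices never yield the same maximum coordinate.

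First, I would set up the Cohn--Markov matrix correspondence, associating to each Markov triple $(a,b,c)$ a triple of matrices $(A,B,C)$ in $SL_2(\ZZ)$ with $C = BA$ and $\text{tr}(A) = 3a$, $\text{tr}(B) = 3b$, $\text{tr}(C) = 3c$. The Markov triple is determined by the simultaneous $SL_2(\ZZ)$-conjugacy class of $(A,B,C)$, so the uniqueness conjecture becomes the statement: among Cohn triples whose middle matrix has trace $3b$ and whose other two matrices have strictly smaller traces, there is exactly one $SL_2(\ZZ)$-conjugacy class.

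Next, I would argue by descent along the tree. Suppose $(a_1, b, c_1)$ and $(a_2, b, c_2)$ both realize $b$ as their maximum. The two roots of the quadratic $x^2 - 3bc_i x + (b^2 + c_i^2) = 0$ in $x$ are $a_i$ and $3bc_i - a_i$, so the Vieta involutions at the outer coordinates produce ``parent'' triples sitting lower in the tree. I would try to match the parents of the two triples and invoke an inductive hypothesis. The care here is that the inverse Vieta jump $b \mapsto (a_i^2 + c_i^2)/b$ does not automatically preserve the ``maximum in the middle'' property, so one must use that the two triples share the same $b$ \emph{simultaneously} and transfer the comparison to the matrix-theoretic invariants of Step~1, where conjugacy is a rigid relation.

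The decisive step is then arithmetic. $SL_2(\ZZ)$-conjugacy classes of matrices with given trace $3b$ correspond to ideal classes in the order $\ZZ\!\left[(3b + \sqrt{9b^2 - 4})/2\right]$, and the Cohn constraints (the Fricke trace relations together with $\text{tr}(BA) = 3c$ for some $c < b$) cut out a subset of this class group. Uniqueness would follow if this subset were always a singleton. The main obstacle, and the reason Frobenius's conjecture has stood open since 1913, is precisely that the narrow class number of $\ZZ[\sqrt{9b^2 - 4}]$ grows without control as $b$ varies, and no presently known Markov-specific invariant pins down a single class for every $b$. A genuine attack must either construct such an invariant---perhaps from the Christoffel word combinatorics of the tree, from cluster-algebraic positivity, or from the hyperbolic geometry of the once-punctured torus---or prove directly that the spurious conjugacy classes fail some subtler integrality condition built into the Cohn factorization. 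This final step is where I expect the entire difficulty to concentrate.
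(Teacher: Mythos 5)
The statement you were given is Conjecture~\ref{conj:markov}, the Frobenius uniqueness conjecture from 1913. The paper does not prove it --- it explicitly records that the conjecture ``has not been fully solved yet'' and only establishes special cases (e.g.\ Theorem~\ref{cor:markov-p^m} for $b=p^m$ or $2p^m$, via the congruence criterion of Theorem~\ref{thm:Markov-criterion-intro}). Your proposal is likewise not a proof, and you say so yourself: after translating the problem into the Cohn-matrix language and then into ideal classes of the order $\ZZ\!\left[(3b+\sqrt{9b^2-4})/2\right]$, you write that uniqueness ``would follow if this subset were always a singleton'' and concede that no known invariant establishes this. That concession \emph{is} the conjecture; everything preceding it is a (standard, essentially correct) reformulation, not progress toward a proof. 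So the gap is total: the decisive step is absent.

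Two concrete problems with the intermediate steps as well. First, the descent argument does not close: if $(a_1,b,c_1)$ and $(a_2,b,c_2)$ both have maximum $b$, their parents under the inverse Vieta jump have maxima $\max\{a_1,c_1\}$ and $\max\{a_2,c_2\}$ respectively, and these need not be equal, so there is no shared quantity on which to run the induction; the tree structure (the $k=0$ case of Proposition~\ref{prop:all-markov} and Corollary~\ref{cor:all-markov}) places each triple at a unique vertex but says nothing about two distinct vertices sharing a maximum. Second, replacing ``unique Markov triple with maximum $b$'' by ``unique $SL(2,\ZZ)$-conjugacy class of Cohn triples with middle trace $3b$'' is not a reduction in difficulty: the existence and tree structure of Cohn triples is a theorem (the $k=0$ case of Theorem~\ref{thm:cohn-intro}), but counting the admissible classes is exactly the original question in different clothing. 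If you want to prove what is actually provable, the paper's route is the one to follow: show that the relevant congruence $x^2+1\equiv 0 \bmod b$ (or $x^2+kx+1\equiv 0\bmod b$ in the generalized setting) has at most two solutions, which succeeds for $b$ a prime power or twice one, and leave the general case as the open problem it remains.
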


 Conjecture \ref{conj:markov} has been solved affirmatively in special cases, but it has not been fully solved yet. One known approach to Conjecture \ref{conj:markov} is to use the following criterion:

\begin{theorem}[see \cite{aig}*{Corollary 3.17}]\label{thm:Markov-criterion-intro}
For a Markov number $b\neq 1$ or $2$, if the number of solutions to $x^2+ 1\equiv 0 \mod b$ is at most two, then Conjecture \ref{conj:markov} is true for $b$.
\end{theorem}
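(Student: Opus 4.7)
The plan is to translate the uniqueness of Markov triples with maximum $b$ into a uniqueness statement about residues modulo $b$, and then exploit the solution-counting hypothesis.

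First, I would attach to every Markov triple $(a,b,c)$ with $b=\max\{a,b,c\}$ an unordered pair of residues. Using the standard fact that the entries of a Markov triple are pairwise coprime, both $a$ and $c$ are units modulo $b$. Reducing $a^{2}+b^{2}+c^{2}=3abc$ modulo $b$ gives $a^{2}+c^{2}\equiv 0\pmod{b}$, so $u:=ac^{-1}\pmod{b}$ satisfies $u^{2}\equiv -1\pmod{b}$. Exchanging $a$ and $c$ (which does not alter the unordered triple) replaces $u$ by $u^{-1}\equiv -u$, so the unordered pair $\{u,-u\}$ is a well-defined invariant of the unordered triple. This gives a map
\[
\Phi\colon \{\text{Markov triples with max }b\}/\text{order} \longrightarrow \bigl\{\{u,-u\}\subset \ZZ/b\ZZ : u^{2}\equiv -1\bigr\}.
\]

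Second, I would use the hypothesis to make the codomain tiny. Since $b\neq 1,2$ is a Markov number, it is the maximum of at least one triple (the triple in which $b$ first appears on the Markov tree), so $\Phi$ is non-empty and in particular $x^{2}+1\equiv 0\pmod{b}$ has a solution. Because solutions come in genuine pairs $\{u,-u\}$ with $u\neq -u$ as soon as $b\geq 3$, the assumption of at most two solutions forces the codomain of $\Phi$ to be a single point $\{u_{0},-u_{0}\}$. The conjecture for $b$ is therefore equivalent to the injectivity of $\Phi$.

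The main obstacle is this injectivity step. Given $(a,b,c)$ with $a,c<b$, the identity $a^{2}+c^{2}=b(3ac-b)$ combined with $a^{2}+c^{2}<2b^{2}$ forces $ac<b$; so each unordered triple produces a lattice point $(a,c)$ with $0<a,c$, $ac<b$, and $c\equiv \pm u_{0}a\pmod{b}$. To conclude, I must show that in each of the two sublattices $L_{\pm}=\{(x,y)\in\ZZ^{2}: y\equiv \pm u_{0}x\pmod{b}\}$ (both of covolume $b$), the region $0<xy<b$, $x,y>0$ contains at most one point arising from a Markov triple. I would attack this by recognising $(a,c)$ as the output of the Euclidean algorithm applied to $(b,u_{0})$, equivalently via the continued-fraction expansion of $u_{0}/b$: the step at which the Euclidean remainders first cross $\sqrt{b}$ produces a distinguished candidate, and the bound $ac<b$ should pin it down uniquely. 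The delicate point, and where the structural theory of Markov triples (e.g.\ Cohn matrices) must enter, is to certify that every lattice point in $L_{+}$ or $L_{-}$ inside the hyperbolic region that satisfies the full Markov equation is indeed produced by that canonical step, ruling out any rival pair $(a',c')$ in the same residue class.
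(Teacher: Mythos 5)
Your first two steps are sound and in fact coincide with the strategy the paper uses for its generalization (Theorem \ref{thm:criterion-intro}, whose $k=0$ case is the present statement): the residue $u\equiv ac^{-1}\pmod{b}$ is exactly the paper's \emph{characteristic number} $u_t$ (normalized to lie in $(0,b/2)$), it satisfies $u^2+1\equiv 0\pmod{b}$ by the same reduction of the Markov equation modulo $b$, and the at-most-two-solutions hypothesis pins the unordered pair $\{u,-u\}$ down uniquely. But at that point all you have shown is that every triple with maximum $b$ carries the \emph{same} residue pair; the entire content of the theorem is the injectivity of your map $\Phi$, and that is precisely the step you leave open. Your Euclidean-algorithm sketch singles out one candidate $(a,c)$ in the lattice $L_{\pm}$, but the region $\{x,y>0,\ xy<b\}$ has infinite area, so a covolume-$b$ lattice meets it in infinitely many points, and nothing in the proposal excludes a second lattice point in the same residue class that also satisfies the full Markov equation. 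You acknowledge this yourself (``the delicate point \dots is to certify \dots ruling out any rival pair''), but naming the gap does not close it.

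The paper closes exactly this gap with the Cohn-matrix machinery of Sections 3 and 4. Concretely: Lemma \ref{lem:index-of-Ct} identifies $u_t$ with the $(1,1)$-entry of the Cohn matrix $C_t$ attached to the Farey label $t$ of the triple, so that $u_t$ together with $b$ determines $C_t$ completely (the trace condition fixes the $(2,2)$-entry and $\det=1$ then fixes the $(2,1)$-entry); Theorem \ref{thm:Cohn-distinct}, proved via the strict monotonicity of the index $m_{11}/m_{12}$ along the Farey order, shows that $t\mapsto C_t$ is injective; and Proposition \ref{prop:equivalent-condition-each} converts uniqueness of the label $t$ back into uniqueness of the triple. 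Some structural input of this kind --- a rigid object in bijection with Markov triples and determined by the pair $(u,b)$ alone --- is unavoidable here, and your proposal does not supply it, so the argument as written does not prove the theorem.
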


By using this criterion, the following theorem is proved:

\begin{theorem}[see \cite{aig}*{Corollary 3.20}]\label{cor:markov-p^m}
For a Markov number $b$, if there exists an odd prime $p$ and $m\in \mathbb Z_{\geq 1}$ such that $b=p^m$ or $2p^m$, then Conjecture \ref{conj:markov} is true for $b$.
\end{theorem}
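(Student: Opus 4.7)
The plan is to deduce Theorem~\ref{cor:markov-p^m} from the criterion provided by Theorem~\ref{thm:Markov-criterion-intro}. Since $m\geq 1$ and $p$ is an odd prime, we have $b\geq 3$ in the first case and $b\geq 6$ in the second, so the hypothesis $b\neq 1,2$ of Theorem~\ref{thm:Markov-criterion-intro} is automatic, and it suffices to show that the congruence $x^2+1\equiv 0\pmod b$ admits at most two solutions in $\ZZ/b\ZZ$.

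For $b=p^m$ with $p$ odd, I would invoke the standard fact that $(\ZZ/p^m\ZZ)^\times$ is cyclic of order $\varphi(p^m)=p^{m-1}(p-1)$, which is even precisely because $p$ is odd. In any finite cyclic group the squaring endomorphism is at most two-to-one, so the preimage $\{x\in (\ZZ/p^m\ZZ)^\times : x^2=-1\}$ has cardinality either $0$ or $2$; in particular it contains at most two elements, as required.

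For $b=2p^m$ I would apply the Chinese Remainder Theorem to write $\ZZ/2p^m\ZZ\cong \ZZ/2\ZZ\times \ZZ/p^m\ZZ$. A class $x$ with $x^2\equiv -1\pmod{2p^m}$ corresponds to a pair $(x_1,x_2)$ satisfying $x_1^2\equiv 1\pmod 2$ and $x_2^2\equiv -1\pmod{p^m}$. The first condition forces $x_1\equiv 1\pmod 2$, a single choice, while the previous paragraph bounds the number of admissible $x_2$ by $2$, yielding at most two solutions altogether. Feeding each case into Theorem~\ref{thm:Markov-criterion-intro} then completes the proof. I do not anticipate a real obstacle: the only delicate point is the parity of $\varphi(p^m)$, which is exactly where the assumption that $p$ is odd is used, and the only bookkeeping is to confirm that $b\neq 1,2$ so that the criterion applies.
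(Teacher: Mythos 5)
Your proposal is correct. Note first that the paper does not actually prove this statement: it is quoted from the literature (Aigner's Corollary 3.20), and the paper's original content is the generalization to arbitrary $k$ via Theorem \ref{thm:criterion-intro} together with Lemmas \ref{lem:at-most-2-solution-p} and \ref{lem:at-most-2-solutions-pm-even}. Your reduction is exactly the one the paper uses in that generalized setting: check $b\neq 1,2$, bound the number of solutions of the relevant quadratic congruence modulo $b$ by two, handle $b=2p^m$ by the Chinese Remainder Theorem, and feed the result into the criterion. The only genuine difference is in how the counting bound for $x^2+1\equiv 0 \bmod p^m$ is obtained. You appeal to the cyclicity of $(\mathbb{Z}/p^m\mathbb{Z})^\times$, under which the squaring map is at most two-to-one, so $-1$ has at most two square roots; the paper's lemmas instead run an elementary factorization argument, writing $(x_1-x_2)(x_1+x_2+k)\equiv 0$ for two solutions and ruling out the case where $p$ divides both factors (which for $k=0$ would force $x_1\equiv 0$, contradicting $x_1^2\equiv -1$). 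Both are complete; your group-theoretic argument is slicker for $k=0$, while the paper's factorization argument is what survives the passage to general $k$, where it isolates the hypothesis $k^2-4\not\equiv 0 \bmod p$ needed for $m\geq 2$ (for $k=0$ this is $-4\not\equiv 0$, automatic for odd $p$, which is why no extra hypothesis appears in the classical statement).
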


Theorem \ref{cor:markov-p^m} has been (partially or fully) proved by some mathematicians using various methods other than the criterion, for example, Baragar \cite{baragar}, Button \cite{button}, Schmutz \cite{schmutz}, Zhang \cite{zhang}, Lang--Tan \cite{lang-tan} and so on.

Recently, a generalization of the Markov equation endowed with the combinatorial structure of positive integer solutions was given in relation to cluster algebra theory by Banaian \cite{bana}, the first author \cite{gyo21}, and the first author and Matsushita \cite{gyomatsu}. In generalization in this sense, the most generalized class currently known is the following equation given by \cite{gyomatsu}:
 \begin{align}\label{gm-generalized-eq}
    x^2+y^2+z^2+k_1yz+k_2zx+k_3xy=(3+k_1+k_2+k_3)xyz,
 \end{align}
 where $k_1,k_2,k_3 \in \mathbb Z_{\geq 0}$. In papers \cites{gyo21,bansen,gyomatsu}, they attempted to extend the theory of classical Markov equation to the theory of these generalized equations. In this context, Conjecture \ref{conj:markov} gives the following question:

 \begin{question}\label{ques:intro}
 For any number $b$ appearing in a positive integer solution to \eqref{gm-generalized-eq}, is there a unique positive integer solution $(a,b,c)$ up to order such that $\max\{a,b,c\}=b$?
 \end{question}

The answer to this question is \emph{no} if $k_1,k_2,k_3$ are not the same number, and a counterexample is given by \cite{gyomatsu}. However, any counterexamples have not been found yet in the case where $k_1=k_2=k_3$.

 \subsection{Main results} 
 
 In this paper, we consider Question \ref{ques:intro} for the case where $k_1=k_2=k_3$. The equation 
 \[x^2+y^2+z^2+k(yz+zx+xy)=(3+3k)xyz\]
obtained by substituting $k=k_1=k_2=k_3$ to \eqref{gm-generalized-eq} is called the \emph{$k$-generalized Markov equation}. We abbreviate this equation as $\mathrm{GME}(k)$.
 
As is the classical case, a positive integer solution (up to order) of this equation is called the $\emph{$k$-generalized Markov triple}$. According to \cite{gyomatsu}, every $k$-generalized Markov number can be obtained by applying the $\emph{Vieta jumpings}$
\[(a,b,c)\mapsto \left(a,\dfrac{a^2+kab+b^2}{c},b\right)\quad \text{or} \quad (a,b,c)\mapsto \left(b,\dfrac{b^2+kbc+c^2}{a},c\right)\] to $(1,1,1)$ repeatedly. A number appearing in $k$-generalized Markov triples is called a \emph{$k$-generalized Markov number}.

 One of the main results in this paper is a generalization of Theorem \ref{thm:Markov-criterion-intro}:

\begin{theorem}\label{thm:criterion-intro}
Let $k\in \mathbb Z_{\geq 0}$. For a $k$-generalized Markov number $b\neq 1$ or $k+2$, if the number of solutions to $x^2+ kx+1\equiv 0 \mod b$ is at most two, then there is a unique $k$-generalized Markov triple $(a,b,c)$ up to order such that $\max\{a,b,c\}=b$.
\end{theorem}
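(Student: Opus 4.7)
The plan is to adapt the classical proof of Theorem~\ref{thm:Markov-criterion-intro} by replacing the Cohn congruence $u^{2}+1\equiv 0\pmod{b}$ throughout with its $k$-deformed analogue $u^{2}+ku+1\equiv 0\pmod{b}$, and by invoking the generalized Cohn triple machinery developed earlier in the paper wherever the classical argument uses Cohn matrices.

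First I would reduce $\mathrm{GME}(k)$ modulo $b$ for a triple $(a,b,c)$ with $b=\max\{a,b,c\}$ to obtain
\[
a^{2}+kac+c^{2}\equiv 0\pmod{b}.
\]
A short induction on the Vieta jumping tree, starting from $(1,1,1)$, shows that every $k$-generalized Markov triple has pairwise coprime entries, so $c$ is invertible modulo $b$. Dividing the congruence above by $c^{2}$, the residue $u:=ac^{-1}\bmod b$ is a root of $x^{2}+kx+1$, and so is $u^{-1}\equiv ca^{-1}$. Each admissible triple therefore produces an unordered pair $\{u,u^{-1}\}$ of residues modulo $b$. The hypothesis $b\neq 1,k+2$ is used to guarantee $u\neq u^{-1}$: otherwise $u^{2}\equiv 1$ combined with $u^{2}+ku+1\equiv 0$ forces $b\mid k+2$ or $b\mid k-2$, which an inspection of the bottom of the Cohn tree of $k$-generalized Markov numbers should exclude.

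The decisive step---and the one I expect to be the main obstacle---is showing that the assignment $(a,b,c)\mapsto\{u,u^{-1}\}$ is injective on unordered triples with $b$ maximal. Suppose $(a_{1},b,c_{1})$ and $(a_{2},b,c_{2})$ give the same pair: after relabeling, either $a_{1}c_{2}\equiv a_{2}c_{1}\pmod{b}$ or $a_{1}a_{2}\equiv c_{1}c_{2}\pmod{b}$, and the goal is to promote these congruences to equalities of unordered pairs $\{a_{i},c_{i}\}$. My plan is to lift each Markov triple to a generalized Cohn triple in which $a_{i}$ and $c_{i}$ appear (modulo $b$) as prescribed matrix entries, and then to appeal to the uniqueness of this lift---a generalization of the classical fact that a Markov triple determines its Cohn matrix up to conjugation. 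This is precisely the place where the generalized Cohn tree structure built up earlier in the paper is indispensable, and where most of the prior technical work should feed in.

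Finally, with injectivity in hand, the conclusion is a simple count. Each triple produces a pair $\{u,u^{-1}\}$ of two distinct residues, so under the hypothesis that $x^{2}+kx+1\equiv 0\pmod{b}$ admits at most two solutions there is at most one such pair, and hence at most one $k$-generalized Markov triple with $b$ as its maximal entry. Since $b$ is a $k$-generalized Markov number by assumption, at least one such triple exists, and uniqueness follows.
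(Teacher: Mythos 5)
Your overall strategy is the paper's: reduce $\mathrm{GME}(k)$ modulo $b$ to get $a^{2}+kac+c^{2}\equiv 0\pmod b$, observe that $u\equiv ac^{-1}$ and $u^{-1}\equiv -u-k$ are roots of $x^{2}+kx+1$ modulo $b$, and use the two-root hypothesis to pin down $u$. (Your residue $u$ is exactly the paper's characteristic number $u_t$, and your pair $\{u,u^{-1}\}$ is its pair $\{u_t,\,b-u_t-k\}$; the counting at the end is fine once one notes $b\geq 2k^{2}+6k+5>k+2$, so $b\nmid k\pm 2$ and the two roots in a pair are distinct.)

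The genuine gap is the step you yourself flag as the main obstacle: injectivity of $(a,b,c)\mapsto\{u,u^{-1}\}$. You propose to get it from ``uniqueness of the lift'' of a Markov triple to a Cohn triple ``up to conjugation,'' but that is not available as stated: by Proposition \ref{cohn-mat-with-111} and Remark \ref{rem:not-unique}, a given $k$-generalized Markov triple admits infinitely many $k$-generalized Cohn triples, one for each $\ell\in\mathbb{Z}$, so there is no canonical lift without fixing a normalization, and in any case what is needed is the converse direction — that the residue $u\bmod b$ recovers the triple, not that the triple determines a lift. The paper closes this by fixing $\ell=-k$, proving that the $(1,1)$-entry of $C_{t}(k,-k)$ is positive and lies in $(0,m_t/(k+2))$ (Lemma \ref{lem:1,1-positivity} together with the index bounds $-k<I<1/(k+2)$), concluding that this entry \emph{equals} $u_t$ (Lemma \ref{lem:index-of-Ct}); since the trace and determinant are then forced, $u_t$ and $b$ determine the whole matrix $C_t(k,-k)$, and the strict monotonicity of the index $I_M=m_{11}/m_{12}$ with respect to the Farey order (Theorem \ref{thm:Cohn-distinct}, Corollary \ref{fraction-labeling-inj}) yields $t=\tau$, hence a unique triple via Proposition \ref{prop:equivalent-condition-each}. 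None of this is routine bookkeeping — the positivity and the index estimates are where the hypothesis $b\neq 1,k+2$ actually enters (it guarantees $t\in(0,1)$ and $m_t\geq 2k^{2}+6k+5$), rather than in ruling out $u=u^{-1}$ as you suggest. Without an argument of this kind your proof does not go through.
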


 By using this criterion, we will prove the following theorem: 

 \begin{theorem}\label{thm:intro1}
For any $k\in \mathbb Z_{\geq 0}$, if $b=p$ or $2p$, where $p$ is a prime, then there a unique $k$-generalized Markov triple $(a,b,c)$ up to order such that $\max\{a,b,c\}=b$.
 \end{theorem}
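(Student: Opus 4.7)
The plan is to invoke Theorem \ref{thm:criterion-intro} as the main reduction and to treat its exceptional value $b = k+2$ by a direct structural argument (the other exceptional value $b = 1$ is neither $p$ nor $2p$ and poses no issue). So I split into two cases: (a) $b = p$ or $b = 2p$ with $b \neq k+2$, where the criterion reduces the statement to counting solutions of $x^2 + kx + 1 \equiv 0 \pmod{b}$; and (b) $b = k+2$, treated directly.

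In case (a), if $b = p$ is an odd prime, then $\mathbb{Z}/p\mathbb{Z}$ is a field, so $x^2 + kx + 1$ has at most two roots modulo $p$ and the criterion gives uniqueness. For $b = p = 2$ (which forces $k \geq 1$), a one-line enumeration of residues shows the congruence has at most one solution. For $b = 2p$ with $p$ an odd prime, the Chinese Remainder Theorem identifies the solutions modulo $2p$ with pairs of solutions modulo $2$ and modulo $p$, yielding at most $1 \cdot 2 = 2$ solutions. The lone remaining sub-case is $b = 4 = 2 \cdot 2$; inspecting the first few levels of the Vieta tree rooted at $(1,1,1)$ shows that $4$ is a $k$-generalized Markov number only when $k = 2$, in which situation $b = 4 = k+2$, so this sub-case is absorbed by case (b).

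In case (b), I would show directly that $(1, 1, k+2)$ is the unique triple with maximum $k+2$. The key intermediate claim is that the only $k$-generalized Markov numbers in $[1, k+2]$ are $1$ and $k+2$ itself; granted this, the two remaining entries of any such triple must lie in $\{1, k+2\}$, and a short substitution into $\mathrm{GME}(k)$ eliminates every option but $(1, 1, k+2)$. The intermediate claim follows from a monotonicity property of the Vieta tree: starting from $(1,1,1)$, every non-backward jump produces a new entry $(a^2 + kab + b^2)/c$ strictly exceeding the current maximum, hence every entry introduced from the second level of the tree onward is $> k+2$. The main obstacle I anticipate is precisely this monotonicity lemma: while it is analogous to the classical Markov case and intuitively clear from Vieta's formulas applied to a quadratic in one variable, it must be established uniformly in $k$ and carefully enough to subsume the degenerate small-$k$ situation $b = 4$, $k = 2$. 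The congruence counting in case (a) is otherwise routine once the field structure of $\mathbb{Z}/p\mathbb{Z}$ and the Chinese Remainder Theorem are brought to bear.
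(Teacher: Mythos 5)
Your proposal is correct and follows essentially the same route as the paper: reduce to Theorem \ref{thm:criterion-intro}, bound the number of roots of $x^2+kx+1$ modulo $p$ by two (you invoke the field structure of $\mathbb Z/p\mathbb Z$; the paper proves the same bound directly in Lemma \ref{lem:at-most-2-solution-p}) and modulo $2p$ via the Chinese Remainder Theorem, and dispose of $b=4$ by observing it forces $k=2$. Your explicit treatment of the excluded value $b=k+2$ via monotonicity of the Vieta tree is if anything slightly more careful than the paper, which handles that case only implicitly through Proposition \ref{prop:equivalent-condition-each}.
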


 This is the $k$-generalized version of Theorem \ref{cor:markov-p^m} for the case where $m=1$. For the case where $m>1$, we can extend Theorem \ref{cor:markov-p^m} to the following form:
 
 \begin{theorem}\label{thm:intro2}
For a $k$-generalized Markov number $b$, if there exists a prime $p$ and $m\in \mathbb Z_{\geq 2}$ such that $b=p^m$ or $2p^m$ and either of the following three conditions about $k$ is satisfied, then there is a unique $k$-generalized Markov triple $(a,b,c)$ up to order such that $\max\{a,b,c\}=b$:  
\begin{itemize}
\item [(1)] $k=2$, 
\item [(2)] $k\geq 4$ is even, and both $\dfrac{k}{2}+1$ and $\dfrac{k}{2}-1$ are not divided by $p^2$, and
\item [(3)] $k$ is odd, and both $k+2$ and $k-2$ are not divided by $p^2$.
\end{itemize}
 \end{theorem}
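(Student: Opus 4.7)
The plan is to apply the criterion of Theorem~\ref{thm:criterion-intro} in cases~(2) and~(3) after controlling the number of roots of $x^2+kx+1$ modulo $b$, and to dispose of case~(1) separately by a bijective reduction to the classical Markov equation. Throughout, the Chinese Remainder Theorem splits the count of solutions across the prime-power factors of $b$.

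In case~(1), $\mathrm{GME}(2)$ takes the form $(x+y+z)^2=9xyz$, and the substitution $x=u^2$, $y=v^2$, $z=w^2$ turns it into the classical Markov equation $u^2+v^2+w^2=3uvw$. A direct induction along the Vieta tree rooted at $(1,1,1)$ shows that every $2$-generalized Markov triple is the coordinatewise square of a unique classical Markov triple; in particular the $2$-generalized Markov numbers are precisely the squares of classical Markov numbers, and the bijection preserves the ``max-entry'' relation. Consequently, if $b=p^m$ is a $2$-generalized Markov number with $m\ge 2$, then $m$ is even, the associated classical Markov number is $M=p^{m/2}$ (a prime power), and Theorem~\ref{cor:markov-p^m} applied to $M$ transports back to uniqueness for $b$. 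The companion case $b=2p^m$ forces $2p^m$ to be a perfect square, which demands $p=2$ and $m$ odd; then $M=2^{(m+1)/2}$ must be a classical Markov number that is a power of two, but the only such are $1$ and $2$, giving $m\le 1$ and contradicting $m\ge 2$, so this subcase is vacuous.

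For cases~(2) and~(3) it suffices, by Theorem~\ref{thm:criterion-intro}, to show that $x^2+kx+1\equiv 0\pmod{b}$ has at most two solutions. Modulo an odd prime power $p^m$, completing the square via $y=2x+k$ recasts it as $y^2\equiv D\pmod{p^m}$ with $D=k^2-4$. In case~(2) one has $D=4(k/2-1)(k/2+1)$ and $\gcd(k/2-1,k/2+1)\mid 2$, so for odd $p$ only one of the two factors can be divisible by $p$, whence the hypothesis $p^2\nmid k/2\pm 1$ gives $v_p(D)\le 1$. In case~(3), $D=(k-2)(k+2)$ with $\gcd(k-2,k+2)=1$ since $k$ is odd, and the analogous hypothesis again yields $v_p(D)\le 1$. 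If $v_p(D)=0$ then Hensel's lemma gives $0$ or $2$ solutions; if $v_p(D)=1$ an elementary valuation check rules out any solution when $m\ge 2$. The factor of $2$ in $b=2p^m$ is handled separately: for $k$ odd, parity propagation of Vieta jumping starting at $(1,1,1)$ shows every $k$-generalized Markov number is odd, so $b=2p^m$ never occurs under condition~(3); for $k$ even, $x^2+1\equiv 0\pmod{2}$ admits the unique root $x\equiv 1$, multiplying the total count by one. The residual subcase $p=2$ under condition~(2) follows by noting that $4\nmid k/2\pm 1$ forces $k/2$ to be even, hence $k\equiv 0\pmod{4}$, and then $x^2+kx+1\equiv x^2+1\pmod{4}$ is nowhere zero, yielding no solutions modulo $2^m$ for $m\ge 2$.

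The principal obstacle is case~(1): the congruence $(x+1)^2\equiv 0\pmod{p^m}$ has $p^{\lfloor m/2\rfloor}$ roots, which for $p\ge 3$ and $m\ge 2$ is strictly larger than two, so Theorem~\ref{thm:criterion-intro} is simply too weak to conclude uniqueness directly. The bijective reduction to classical Markov triples bypasses the criterion entirely, routing the proof through Theorem~\ref{cor:markov-p^m}. A secondary technical subtlety lies in the $2$-adic analysis under condition~(2), where the hypothesis must be unpacked to force $k\equiv 0\pmod{4}$ before the criterion can be applied.
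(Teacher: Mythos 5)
Your argument is correct, and for conditions (2)--(3) it takes a genuinely different route from the paper. The paper first proves (Theorem \ref{thm:k-p-markov-conj}, via Lemma \ref{lem:at-most-2-solutions-pm-even}) that $k^2-4\not\equiv 0 \bmod p$ suffices, and then shows in Proposition \ref{prop:sufficient-condition-p^m} that the hypotheses of (2)--(3) force $k^2-4\not\equiv 0\bmod p$ --- but that implication is not purely arithmetic: its proof invokes the existence of a $k$-generalized Cohn matrix with $(1,2)$-entry $p^m$ and computes its determinant modulo $p^2$. Likewise the paper disposes of $p=2$ through a Cohn-matrix determinant argument (Proposition \ref{prop:4-Markov}). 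You instead bound the number of roots of $x^2+kx+1$ modulo $p^m$ directly: completing the square, the hypotheses give $v_p(k^2-4)\le 1$ (since $k/2\pm 1$, resp.\ $k\pm 2$, cannot both be divisible by an odd $p$), so Hensel and a valuation check yield $0$ or $2$ roots; the subcase $v_p(k^2-4)=1$, with $0$ roots, is still covered by the criterion of Theorem \ref{thm:criterion-intro} (and is in fact vacuous, since the characteristic number of Lemma \ref{lem:solution-ut-ut'} would be a root). This is more elementary and self-contained, needing no structural input beyond the criterion itself; what the paper's route buys is the intermediate Theorem \ref{thm:k-p-markov-conj}, a clean sufficient condition of independent interest. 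Case (1) is handled the same way in both (the squaring bijection with classical Markov triples --- the paper's Proposition \ref{squre-markov} --- followed by Theorem \ref{cor:markov-p^m}). Two harmless loose ends in your write-up: Theorem \ref{thm:criterion-intro} excludes $b=k+2$, but one checks that $b=k+2$ of the form $p^m$ or $2p^m$ with $m\ge 2$ always violates (2) and (3), so nothing is lost; and in case (1) the appeal to Theorem \ref{cor:markov-p^m} for $\sqrt{b}=p^{m/2}$ requires the trivial side remark that the only classical Markov numbers that are powers of $2$ are $1$ and $2$ when $p=2$.
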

Of the non-negative integers less than $100$, \begin{align*}
k=&1, 2,  3, 4, 5, 8, 9, 12, 13, 15, 17, 19, 21, 24, 28, 31, 32, 33, 35, 36, 37, 39, 40, 41, 44, 45, 49, 53, \\
&55, 57, 59, 60, 63, 64, 67, 68, 69, 71, 72, 75, 76, 80, 81, 84, 85, 87, 89, 91, 93, 95, 99
\end{align*} satisfy the assumption of Theorem \ref{thm:intro2} for every $p$.

From the above theorems, it seems that Question \ref{ques:intro} is a problem worth considering when $k=k_1=k_2=k_3$.

 \begin{conjecture}\label{conj:intro}
 Let $k\in \mathbb Z_{\geq 0}$. For any $k$-generalized Markov number $b$, there is a unique $k$-generalized Markov triple $(a,b,c)$ up to order such that $\max\{a,b,c\}=b$.
 \end{conjecture}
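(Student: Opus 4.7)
Since Conjecture \ref{conj:intro} specialises at $k=0$ to Frobenius's classical uniqueness conjecture, which is still open, I will not attempt a complete proof; instead I will outline a strategy that extends the criterion of Theorem \ref{thm:criterion-intro} as far as current techniques allow and then indicate where the essential difficulty lies. The overall plan is to reduce the uniqueness statement, prime by prime, to a purely arithmetic question about the quadratic $x^2+kx+1$ modulo $b$, and to control that question using the combinatorial structure of the $k$-generalized Markov/Cohn tree developed earlier in the paper.

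The first step is to factor an arbitrary $k$-generalized Markov number as $b=\prod_{i} p_i^{m_i}$ and apply the Chinese Remainder Theorem, so that the solution count of $x^2+kx+1\equiv 0\pmod b$ equals the product of the solution counts modulo each $p_i^{m_i}$. When $p_i\nmid k^2-4$ and $p_i$ is odd (or $p_i=2$ together with $k$ odd), Hensel's lemma pins the local count at most $2$; combined with Theorem \ref{thm:criterion-intro} this already slightly enlarges Theorems \ref{thm:intro1} and \ref{thm:intro2} to handle any $b$ with at most one ``bad'' prime factor. The second step, needed when $b$ has several prime factors or when $p^2\mid k^2-4$ for some $p\mid b$, is to argue that, even though the congruence may have $2^{\omega(b)}$ roots, only two of them can actually arise as Vieta companions of $b$. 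I would model each $k$-generalized Markov triple as a walk on the $k$-Cohn tree as in \cite{gyomatsu} and try to prove by induction on the distance to the root $(1,1,1)$ that any two distinct walks terminating at the vertex labelled $b$ produce the same unordered pair of companions; this is the natural $k$-analogue of the snake-graph arguments available in the classical case.

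The principal obstacle is exactly the one that blocks Frobenius's conjecture classically. The criterion-plus-CRT route only constrains the companion pair modulo $b$, so as soon as $\omega(b)$ grows the ambiguity overwhelms the bound of Theorem \ref{thm:criterion-intro}; the combinatorial route requires a new invariant on the $k$-Cohn tree that separates walks ending at the same $b$, and no such invariant is presently known even in the classical setting. I therefore expect the present techniques to verify Conjecture \ref{conj:intro} for any fixed $b$ whose factorisation is \emph{simple enough} relative to $k^2-4$, but a fully general proof should require a new arithmetic input, for instance an upper bound on $\omega(b)$ for $k$-generalized Markov numbers $b$ or a non-trivial constraint on their $p$-adic valuations.
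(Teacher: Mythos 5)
You are right not to attempt a proof: the statement is labelled a conjecture in the paper, it specialises at $k=0$ to Frobenius's open uniqueness conjecture, and the paper itself proves no such theorem --- it only establishes the partial results of Theorems \ref{thm:intro1}, \ref{thm:intro2} and \ref{thm:k-p-markov-conj}. Your outlined strategy (reduce to counting roots of $x^2+kx+1\equiv 0 \bmod b$ via Theorem \ref{thm:criterion-intro}, then localise with CRT and Hensel-type arguments at each prime power) is exactly the route the paper takes for its partial results, and your identification of the obstruction --- that the congruence criterion only sees the companion pair modulo $b$ and degrades as $\omega(b)$ grows --- matches the paper's final theorem, which for $b$ with $n$ odd prime factors only bounds the number of triples by $2^{n-1}$ rather than by $1$.

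One quantitative slip in your sketch: you claim the CRT-plus-Hensel step handles ``any $b$ with at most one bad prime factor.'' This is too strong. Even when every odd prime $p_i\mid b$ satisfies $p_i\nmid k^2-4$, each such prime power contributes up to two roots, so a $b$ with two distinct odd prime factors can already have four roots of $x^2+kx+1\equiv 0\bmod b$, which exceeds the bound of two required by Theorem \ref{thm:criterion-intro}. The criterion therefore only covers $b=p^m$ or $2p^m$ (one odd prime factor, with the factor $2$ contributing at most one root), which is precisely the scope of the paper's Theorem \ref{thm:intro2}; no enlargement to ``one bad prime among several good ones'' follows. The genuinely new input you would need is the one you name in your last paragraph, and it is not supplied by the paper either.
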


 \subsection{Generalization of Cohn triple}
In 1950s, the \emph{(classical) Cohn triple} was introduced by Cohn \cite{cohn} in his study of the Markov equation. In the present paper, we will introduce a generalization of it. They play an important role on the proof of Theorems \ref{thm:criterion-intro}--\ref{thm:intro2}. 

A classical Cohn triple $(P,Q,R)=\left(\begin{bmatrix}    p_{11} &p_{12}\\p_{21}&p_{22}
\end{bmatrix},\begin{bmatrix}    q_{11} &q_{12}\\q_{21}&q_{22}
\end{bmatrix},\begin{bmatrix}    r_{11} &r_{12}\\r_{21}&r_{22}
\end{bmatrix}\right)$ is a triple of elements of $SL(2,\mathbb Z)$ which satisfies
 \begin{itemize}
     \item $(p_{12},q_{12},r_{12})$ is a Markov triple,
     \item $Q=PR$,
     \item $(\mathrm{tr} (P),\mathrm{tr}(Q),\mathrm{tr}(R))=(3p_{12},3q_{12},3r_{12})$.
\end{itemize}
The Cohn triple is a ``matrixization" of the Markov triple. Indeed, the following theorem holds:
\begin{theorem}[see \cite{aig}*{Theorem 4.7}]
Every Cohn triple $(a,b,c)$ with $b\geq \max\{a,c\}$ can be obtained by applying 
\begin{equation}\label{eq:cohn-jumping}
(P,Q,R)\mapsto (P,PQ,Q)\quad \text{or} \quad(P,Q,R)\mapsto (Q,QR,R)
\end{equation}
successively to a Cohn triple with $(p_{12},q_{12},r_{12})=(1,1,1)$. Moreover, the transformations of $(1,2)$-entries in \eqref{eq:cohn-jumping} coincide with the Vieta jumpings of the Markov triples. 
\end{theorem}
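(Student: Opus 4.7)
The plan is to verify the two assertions in reverse order: identifying the $(1,2)$-entry transformation with the Vieta jump yields, as a byproduct, that jumps preserve the Cohn property, after which reachability follows by descent on the Markov tree.

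First, for the jump $(P,Q,R)\mapsto(P,PQ,Q)$, I would compute $(PQ)_{12}$ via traces. Using the Fricke identity $\mathrm{tr}(AB)+\mathrm{tr}(AB^{-1})=\mathrm{tr}(A)\mathrm{tr}(B)$ in $SL(2,\mathbb{Z})$, and noting that $Q=PR$ gives $PQ^{-1}=PR^{-1}P^{-1}$, which is conjugate to $R^{-1}$ and therefore has trace $\mathrm{tr}(R)=3r_{12}$, I obtain $\mathrm{tr}(PQ)=9p_{12}q_{12}-3r_{12}$. Imposing the prospective Cohn trace condition $\mathrm{tr}(PQ)=3(PQ)_{12}$ gives $(PQ)_{12}=3p_{12}q_{12}-r_{12}$, and combining with the Markov equation $p_{12}^{2}+q_{12}^{2}+r_{12}^{2}=3p_{12}q_{12}r_{12}$ rewrites this as $(p_{12}^{2}+q_{12}^{2})/r_{12}$, exactly the Vieta jump. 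The other jump $(P,Q,R)\mapsto(Q,QR,R)$ is handled symmetrically.

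Next I would confirm that each jump preserves the full Cohn property. The product relation $Q'=P'R'$ is transparent ($PQ=P\cdot Q$); the $(1,2)$-triple remains Markov because it transforms by a Vieta jump; the trace conditions on the outer entries are unchanged; and the trace condition on the new middle is precisely the content of the Fricke calculation above. For reachability, I would invoke the classical description of the Markov tree (Aigner, Chapter 3): every Markov triple with $b\geq\max\{a,c\}$ is obtained from $(1,1,1)$ by a finite sequence of Vieta jumps. Given a target Cohn triple, iterating the inverse matrix jump---for example, $(P,Q,R)\mapsto(P,R,P^{-1}R)$, which stays in $SL(2,\mathbb{Z})$ and is Cohn-preserving by the very same trace computation---descends in lockstep with the Markov tree of $(1,2)$-entries and terminates at a Cohn triple with $(p_{12},q_{12},r_{12})=(1,1,1)$; reversing this descent is the required forward orbit.

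The hard part, as I see it, is keeping the Fricke bookkeeping clean: once one notices that $Q=PR$ forces $PQ^{-1}$ and $R^{-1}$ to be conjugate, the entire first step collapses to a one-line computation, but the temptation is to attack $(PQ)_{12}$ directly by expanding entries, which becomes unwieldy. Conceptually, the only delicate point in the reachability argument is confirming that the sequence of inverse jumps is \emph{forced} by the Markov-tree structure, so that no branching leads to an unexpected base triple; this is automatic because the Markov tree itself is a tree rooted at $(1,1,1)$ under the ``middle is max'' convention, and the matrices are determined step by step by the previous triple and the jump chosen.
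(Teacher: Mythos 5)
Your overall strategy is the same as the one the paper uses for its generalized version (Theorem 3.5 together with Lemma 3.8 and Proposition 3.9, specialized to $k=0$): compute the trace of the new middle matrix via the Fricke identity, check that the $(1,2)$-entries undergo a Vieta jump, and then descend to the base triple by the inverse jumps. The Fricke computation itself is right: $Q=PR$ gives $PQ^{-1}=PR^{-1}P^{-1}$, so $\mathrm{tr}(PQ)=\mathrm{tr}(P)\mathrm{tr}(Q)-\mathrm{tr}(R)=9p_{12}q_{12}-3r_{12}$, and your inverse jump $(P,Q,R)\mapsto(P,R,P^{-1}R)$ is the correct parent map.

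However, there is a genuine gap at the central step. You write that ``imposing the prospective Cohn trace condition $\mathrm{tr}(PQ)=3(PQ)_{12}$ gives $(PQ)_{12}=3p_{12}q_{12}-r_{12}$,'' and later that ``the trace condition on the new middle is precisely the content of the Fricke calculation above.'' It is not: the Fricke identity only computes $\mathrm{tr}(PQ)$; it says nothing about the $(1,2)$-entry of $PQ$. The assertion $\mathrm{tr}(PQ)=3(PQ)_{12}$ is exactly the new Cohn condition you are trying to establish, so as written the argument assumes its key conclusion. This is precisely the point to which the paper devotes the bulk of its proof of Theorem 3.5, via the matrix identity of Lemma 3.7 relating $M\bigl[\begin{smallmatrix}0&0\\3&0\end{smallmatrix}\bigr]M$ to $\mathrm{tr}(M)M$. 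In the classical case the gap can be closed by a short direct computation: from $\mathrm{tr}(Q)=3q_{12}$ one gets $(PQ)_{12}=p_{11}q_{12}+p_{12}q_{22}=3p_{12}q_{12}-(p_{12}q_{11}-p_{11}q_{12})$, and $R=P^{-1}Q$ gives $r_{12}=p_{12}q_{11}-p_{11}q_{12}$, whence $(PQ)_{12}=3p_{12}q_{12}-r_{12}$ and then $\mathrm{tr}(PQ)=3(PQ)_{12}$ follows by comparison with the Fricke value. You need to include some such verification; without it the preservation of the Cohn property under the jumps, and hence the whole induction, is unproved. The rest of your outline (descent on the Markov tree, termination at $(1,1,1)$) is sound once this is repaired.
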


We will explain how to generalize the Cohn triples to triples which are compatible with the $k$-generalized Markov equation. It is known that a triple $(p_{12},q_{12},r_{12})$ is a Markov triple if and only if $(3p_{12},3q_{12},3r_{12})$ is a positive integer solution to the \emph{second Markov equation},
\[x^2+y^2+z^2=xyz\]
(see \cite{aig}*{Chapter 2}).
We introduce the \emph{$k$-generalized second Markov equation},
\[x^2+y^2+z^2+(k^2+2k)(x+y+z)+2k^3+3k^2=xyz.\]
We abbreviate the equation as $\textrm{GSME}(k)$. 
If a triple $(p_{12},q_{12},r_{12})$ is a $k$-generalized Markov triple, then \[((3+3k)p_{12}-k,(3+3k)q_{12}-k,(3+3k)r_{12}-k)\] is a positive integer solution to $\textrm{GSME}(k)$ by a straightforward calculation. Based on this, we define the \emph{$k$-generalized Cohn triple} as a triple $(P,Q,R)$ which satisfies
\begin{itemize}
     \item $(p_{12},q_{12},r_{12})$ is a $k$-generalized Markov triple,
     \item $Q=PR-S$, where $S=\begin{bmatrix}
         k&0\\3k^2+3k&k
     \end{bmatrix}$,
     \item $(\mathrm{tr} (P),\mathrm{tr}(Q),\mathrm{tr}(R))=((3+3k)p_{12}-k,(3+3k)q_{12}-k,(3+3k)r_{12}-k)$,
\end{itemize}
and we prove the following theorem:
\begin{theorem}\label{thm:cohn-intro}
Every $k$-generalized Cohn triple $(a,b,c)$ with $b\geq \max\{a,c\}$ is obtained by applying 
\begin{equation}\label{eq:k-gen-cohn-jumping}
    (P,Q,R)\mapsto (P,PQ-S,Q)\quad \text{or} \quad (P,Q,R)\mapsto (Q,QR-S,R)
\end{equation}
successively to a $k$-generalized Cohn triple with $(p_{12},q_{12},r_{12})=(1,1,1)$. Moreover, the transformations of $(1,2)$-entries in \eqref{eq:k-gen-cohn-jumping} coincide with the Vieta jumpings of the $k$-generalized Markov triples. 
\end{theorem}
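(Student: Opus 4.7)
The plan is to induct on the depth of $(p_{12},q_{12},r_{12})$ in the tree of $k$-generalized Markov triples rooted at $(1,1,1)$, whose combinatorial structure is the content of \cite{gyomatsu}.

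For the base case, I would write down an explicit $k$-generalized Cohn triple over $(1,1,1)$. A matrix $P\in SL(2,\mathbb{Z})$ with $p_{12}=1$ and $\mathrm{tr}(P)=3+2k$ is determined by the single integer parameter $\alpha=p_{11}$, so three such matrices $P,Q,R$ carry three free parameters. Imposing the three non-trivial entry-equations in $Q=PR-S$ (the fourth follows from $\det=1$) pins these parameters down and yields an explicit base family, which in the special case $k=0$ recovers Cohn's original construction.

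The heart of the proof is the following compatibility lemma: if $(P,Q,R)$ is a $k$-generalized Cohn triple then so is $(P,PQ-S,Q)$, and the $(1,2)$-entry of $PQ-S$ equals the Vieta partner $(p_{12}^2+kp_{12}q_{12}+q_{12}^2)/r_{12}$. The defining identity $Q'=P'R'-S$ for the new triple is built in. For the $(1,2)$-entry and the trace, I would combine Cayley--Hamilton ($P^{-1}=\mathrm{tr}(P)\,I-P$) with the hypothesis $Q=PR-S$ to obtain the closed expansion
\[
PQ-S \;=\; \mathrm{tr}(P)\,Q - R + (\mathrm{tr}(P)-1)\,S - PS.
\]
Extracting the $(1,2)$-entry (noting $S_{12}=0$ and $(PS)_{12}=kp_{12}$) gives
\[
(PQ-S)_{12} \;=\; \mathrm{tr}(P)\,q_{12} - r_{12} - k\,p_{12} \;=\; (3+3k)\,p_{12}q_{12} - k(p_{12}+q_{12}) - r_{12},
\]
which, after multiplication by $r_{12}$ and use of $\mathrm{GME}(k)$, collapses to $(p_{12}^2+kp_{12}q_{12}+q_{12}^2)/r_{12}$, as required. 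Taking traces of the same expansion, the key cancellation comes from $\mathrm{tr}(PS)=k\,\mathrm{tr}(P)+3k(k+1)\,p_{12}$: the entry $S_{21}=3k(k+1)$ is tuned precisely so that $3k(k+1)\,p_{12}$ cancels against the corresponding term from $k\,\mathrm{tr}(P)=3k(k+1)p_{12}-k^2$, leaving $\mathrm{tr}(PQ-S)=(3+3k)(PQ-S)_{12}-k$. The argument for the other jump $(P,Q,R)\mapsto(Q,QR-S,R)$ is symmetric.

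With the compatibility lemma established the induction closes in both directions. Starting from a base Cohn triple and running the matrix Vieta jumps along the unique path in the Markov tree from $(1,1,1)$ to a prescribed $(a,b,c)$ with $b$ maximal produces a $k$-generalized Cohn triple over $(a,b,c)$; conversely, an arbitrary Cohn triple $(P,Q,R)$ with $q_{12}$ maximal admits a forced inverse jump, since $Q=PR-S$ determines $R=P^{-1}(Q+S)$ uniquely, so peeling jumps off one at a time reduces the underlying Markov triple step by step until the base case is reached. The main technical obstacle is the trace bookkeeping in the compatibility lemma, where the specific values of the entries of $S$ are essential; without the particular choice $S_{21}=3k(k+1)$ the cancellation above would fail and the new triple would violate the Cohn trace condition.
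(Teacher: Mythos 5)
Your overall architecture is the same as the paper's: classify the Cohn triples over $(1,1,1)$ (Proposition \ref{cohn-mat-with-111}), prove a forward compatibility lemma showing that the matrix jumps preserve Cohn-ness and project to the Vieta jumpings (Theorem \ref{thm:cohn-markov}), and then descend an arbitrary Cohn triple back to the base case (Proposition \ref{pr:all-cohn-triple}). Your forward computation is correct and slightly cleaner than the paper's: the expansion $PQ-S=\mathrm{tr}(P)Q-R+(\mathrm{tr}(P)-1)S-PS$ together with $\mathrm{tr}(PS)=k\,\mathrm{tr}(P)+3k(k+1)p_{12}$ reproduces exactly the cancellations the paper obtains via Lemma \ref{lem:M3+3kM}, and your reading of the $(1,2)$-entry agrees with the alternative form $(3+3k)ab-c-k(a+b)$ of the Vieta partner. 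One omission in this half: a Cohn matrix must lie in $SL(2,\mathbb Z)$, and $\det(PQ-S)=1$ is not automatic; the paper checks it via $\det(PQ-S)=\det(PQ)-k\,\mathrm{tr}(PQ)+k^2+(3k^2+3k)x_{12}$ with $\mathrm{tr}(PQ)=(3+3k)x_{12}+k$, where $x_{12}$ is the $(1,2)$-entry of $PQ$. This is short but must be said.

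More seriously, the converse direction is not ``forced'' in the sense you claim. The relation $R=P^{-1}(Q+S)$ you cite merely recovers $R$ from $P$ and $Q$ inside the given triple; the parent candidate under the left inverse jump is $(P,R,P^{-1}(R+S))$, and the whole point is to show that this is again a $k$-generalized Cohn triple --- that $P^{-1}(R+S)$ has integer entries and determinant one, satisfies the trace condition, and that the $(1,2)$-entries form the parent Markov triple. Without this (the paper's Lemma \ref{Cohn-Markov2}), the descent cannot be iterated and does not terminate inside the classified family of Proposition \ref{cohn-mat-with-111}: uniqueness of the candidate parent is not the issue, its membership in the set of Cohn triples is. The required verification is parallel to your forward one (using $\mathrm{tr}(P^{-1}S)=-k^2$ and $\det(R+S)=1$), so the gap is fillable with the tools you already have, but as written the converse half of the theorem is asserted rather than proved.
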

The generalized second Markov equation is derived from  the following identity for any matrices $A, B, C$ in $SL(2,\mathbb C)$, which have been found by Luo \cite{luo} and N\"a\"at\"anen--Nakanishi \cite{nana}: for $a:=-\mathrm{tr}(A), b:=-\mathrm{tr}(B),c:=-\mathrm{tr}(C), d:=-\mathrm{tr}(ABC), x:=-\mathrm{tr}(AB), y:=-\mathrm{tr}(BC), z:=-\mathrm{tr}(CA)$, the equality
\[x^2+y^2+z^2+(ad+bc)x+(bd+ca)y+(cd+ab)z+a^2+b^2+c^2+d^2+abcd-4=xyz\]holds.
For triples $(A,B,C)$ and $(P,Q,R)$ which satisfy 
\begin{itemize}
    \item $\mathrm{tr}(A)=\mathrm{tr}(B)=\mathrm{tr}(C)=-k$,
    \item $\mathrm{tr}(ABC)=-2$, 
    \item $\mathrm{tr}(P)=-\mathrm{tr}(BC),\ \mathrm{tr}(Q)=-\mathrm{tr}(CA),\ \mathrm{tr}(R)=-\mathrm{tr}(AB),$
\end{itemize}$(\mathrm{tr}(P),\mathrm{tr}(Q),\mathrm{tr}(R))$ is a solution to the $k$-generalized second Markov equation.  It turns out that a triple $(A,B,C)$ of $SL(2,\mathbb Z)$ satisfying the above conditions can be taken for each $k$-generalized Cohn triple $(P,Q,R)$. Relations between this triple $(A,B,C)$ and the $k$-generalized Cohn triple $(P,Q,R)$ will be studied in a forthcoming paper.
\subsection*{Acknowledgements}
The authors would like to thank Toshiki Matsusaka for helpful comments. The first author is supported by JSPS KAKENHI Grant Number 22KJ0731, and the second author is partially supported by JSPS KAKENHI Grant Number 23KJ1938, 23K12971.
\section{Generalized Markov equation}
We consider the following binary tree $\mathrm{WM}\mathbb T(k)$:
\begin{itemize}
\item [(1)] the root vertex is $(1,1,1)$,
\item [(2)] every vertex $(a,b,c)$ has following two children;
\[\begin{xy}(0,0)*+{(a,b,c)}="1",(30,-15)*+{\left(b,\dfrac{b^2+kbc+c^2}{a},c\right).}="2",(-30,-15)*+{\left(a,\dfrac{a^2+kab+b^2}{c},b\right)}="3", \ar@{-}"1";"2"\ar@{-}"1";"3"
\end{xy}\]
\end{itemize}
The tree $\mathrm{WM}\mathbb T(k)$ is called the \emph{wide $k$-generalized Markov tree}. The transformations
\[(a,b,c)\mapsto \left(a,\dfrac{a^2+kab+b^2}{c},b\right) \quad \text{and}\quad (a,b,c)\mapsto \left(a,\dfrac{a^2+kab+b^2}{c},b\right)\] are called the \emph{left Vieta jumping} and the \emph{right Vieta jumping}, respectively. 
Note that for every $(a,b,c) \in \mathrm{WM}\mathbb T(k)$, $b$ is the unique maximal number in $a,b$ and $c$ if $(a,b,c)$ is not the root vertex. The following  proposition is the special case of \cite{gyomatsu}*{Theorem 1}:
\begin{proposition}\label{prop:all-markov}
All vertices in $\mathrm{WM}\mathbb{T}(k)$ are $k$-generalized Markov triples, and each $k$-generalized Markov triple $(a,b,c)$  with $b> \max\{a,c\}$ appears exactly twice in $\mathrm{WM}\mathbb T(k)$. Moreover, one of them appears in the left descendants of the root, and the other does in the right descendants of the root.
\end{proposition}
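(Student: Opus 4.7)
The plan has two parts. First, a routine induction on depth shows every vertex in $\mathrm{WM}\mathbb T(k)$ is a $k$-generalized Markov triple. The root $(1,1,1)$ satisfies $\mathrm{GME}(k)$ by inspection; for the inductive step, view $\mathrm{GME}(k)$ as a quadratic in the third variable,
\[
z^2 + \bigl(k(a+b)-(3+3k)ab\bigr)z + (a^2+kab+b^2) = 0,
\]
so that the other root $c' = (a^2+kab+b^2)/c$ satisfies $c+c' \in \mathbb{Z}$ and $c\cdot c' > 0$, hence $c' \in \mathbb{Z}_{\geq 1}$; full symmetry of $\mathrm{GME}(k)$ in its three variables then makes $(a,c',b)$ a solution, and the right jump is identical. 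A parallel induction shows every non-root vertex has $b > \max\{a,c\}$, since $c' > b$ reduces to $a^2+kab+b^2 > bc$, which holds from $b \geq c$ and $a \geq 1$.

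For the appearance count, I would invert the construction. For $(a,b,c)$ with $b > \max\{a,c\}$, define its \emph{predecessor} to have entries $\{a, b', c\}$, where $b' := (a^2+kac+c^2)/b$ is the Vieta partner of $b$ in the middle-variable quadratic. The key lemma is $b' < b$ for every non-root vertex, equivalently
\[
f(a,b,c) := 2b + k(a+c) - (3+3k)ac > 0.
\]
A direct substitution using the sum-of-roots identity $c+c' = (3+3k)ab-k(a+b)$ gives $f(a,c',b) = c'-c$ after a left jump, and symmetrically $f(b,a',c) = a'-a$ after a right jump. Since $c' > c$ and $a' > a$ (both from the growth $c', a' > b$ established above) and $f(1,k+2,1) = k+1 > 0$ by direct check, induction along the tree yields $f > 0$, hence $b' < b$, at every non-root vertex.

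With $b' < b$ secured, the two-appearance claim becomes bookkeeping. The tree structure uniquely determines, for any given node-label $T$ with $b > \max\{a,c\}$, both the label of the parent (the canonically-ordered predecessor, with max in the middle) and whether $T$ is a left or right child: if in the predecessor's canonical form the entry $c$ of $T$ plays the role of the max, then $T$ is a left child, while if $a$ is the max, then $T$ is a right child. Consequently, the path of labels from $T$ back to the subtree-root $(1, k+2, 1)$ is determined by $T$ alone, and each label appears at most once per subtree. Existence of at least one appearance on each side is obtained by iterating the predecessor map (which strictly shrinks the max) until one reaches the root $(1,1,1)$; the penultimate vertex is $(1,k+2,1)$, which sits as both the left and right child of $(1,1,1)$, so the forward path back down to $T$ can be executed through either subtree, giving exactly one occurrence in each.

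The principal obstacle is the lemma $b' < b$; the slickest proof uses the transformation identity for $f$ above, which is elementary but requires careful substitution into the Vieta relations.
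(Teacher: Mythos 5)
The paper does not actually prove this proposition; it is quoted as a special case of \cite{gyomatsu}*{Theorem 1}, so your proposal is being measured against the standard descent argument rather than an in-text proof. Your first half is fine: the Vieta/symmetry argument that every vertex is a positive integer solution, and the induction showing the middle entry of every non-root vertex is the strict maximum, are both correct. The ``at most once per subtree'' bookkeeping (parent label and left/right position are determined by the label, reading off which of $a$, $c$ equals the parent's maximum) is also essentially right, provided triples are tracked as ordered tuples.

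The genuine gap is in the existence direction, and it sits exactly where you flagged ``the principal obstacle.'' Your proof of the key lemma $b'<b$ is an induction \emph{along the tree}: the identities $f(a,c',b)=c'-c$ and $f(b,a',c)=a'-a$ only apply to a triple already exhibited as a child of another tree vertex. But to show that an \emph{arbitrary} $k$-generalized Markov triple $(a,b,c)$ with $b>\max\{a,c\}$ appears in the tree, you must run the predecessor map starting from a triple not yet known to be a vertex, and for such a triple your induction says nothing. Worse, the identity is circular there: $f(a,b,c)=2b+k(a+c)-(3+3k)ac=2b-(b+b')=b-b'$ holds for every solution, so ``$f>0$'' \emph{is} the descent inequality rather than a tool for proving it. The standard repair is a direct argument valid for all positive integer solutions: the middle-variable quadratic $g(z)=z^2-\bigl((3+3k)ac-k(a+c)\bigr)z+a^2+kac+c^2$ satisfies $g(\max\{a,c\})\leq 0$ (e.g.\ for $a\geq c\geq 1$ one checks $(3+3k)a^2c\geq(2+k)a^2+2kac+c^2$), whence $b'\leq\max\{a,c\}<b$. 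You also assert without proof that the descent terminates at $(1,1,1)$; this requires showing $(1,1,1)$ is the only solution lacking a unique strict maximum, which uses the pairwise coprimality of Proposition \ref{relatively-prime} (if $a=b\geq c$ then $a\mid c^2$ forces $a=1$). Without these two ingredients the ``at least once on each side'' half of the statement is unproven.
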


Since there are vertices in $\mathrm{WM}\mathbb{T}(k)$ assigned the same triple, we take the full subtree with the left child of $\mathrm{WM}\mathbb{T}(k)$ as the root to resolve this. This tree is called \emph{$k$-generalized Markov tree} and is denoted by $\mathrm{M}\mathbb{T}(k)$. By Proposition \ref{prop:all-markov}, we have the following corollary:

\begin{corollary}\label{cor:all-markov}
All vertices in $\mathrm{M}\mathbb{T}(k)$ are $k$-generalized Markov triples, and each $k$-generalized Markov triple $(a,b,c)$  with $b> \max\{a,c\}$ appears exactly once in $\mathrm{M}\mathbb T(k)$.
\end{corollary}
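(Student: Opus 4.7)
The plan is to derive the corollary directly from Proposition \ref{prop:all-markov}; there is essentially nothing to prove beyond unpacking the definition of $\mathrm{M}\mathbb{T}(k)$. The proposition already does all the combinatorial work, and the corollary is a bookkeeping statement that records what happens after restricting the wide tree to a single subtree.

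First I would recall from the definition just preceding the corollary that $\mathrm{M}\mathbb{T}(k)$ is the maximal subtree of $\mathrm{WM}\mathbb{T}(k)$ whose root is the left child of $(1,1,1)$, namely $(1,k+2,1)$. In particular, every vertex of $\mathrm{M}\mathbb{T}(k)$ is a vertex of $\mathrm{WM}\mathbb{T}(k)$, and the vertex set of $\mathrm{M}\mathbb{T}(k)$ coincides exactly with the set consisting of the left child of $(1,1,1)$ together with all of its descendants, i.e. with the ``left descendants of the root'' appearing in Proposition \ref{prop:all-markov}. The first clause of the corollary then follows immediately from the first clause of Proposition \ref{prop:all-markov}: each vertex of $\mathrm{M}\mathbb{T}(k)\subseteq \mathrm{WM}\mathbb{T}(k)$ is a $k$-generalized Markov triple.

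For the uniqueness clause, I would fix a $k$-generalized Markov triple $(a,b,c)$ with $b>\max\{a,c\}$ and apply the second assertion of Proposition \ref{prop:all-markov}: such a triple occurs exactly twice in $\mathrm{WM}\mathbb{T}(k)$, once among the left descendants of $(1,1,1)$ and once among the right descendants. Combining this with the identification of the vertex set of $\mathrm{M}\mathbb{T}(k)$ made in the previous paragraph, exactly one of these two occurrences lies in $\mathrm{M}\mathbb{T}(k)$, which is precisely the statement to be proved.

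The main obstacle here is purely notational rather than mathematical: one has to be careful that the phrase ``left descendants of the root'' in Proposition \ref{prop:all-markov} really coincides with the vertex set of $\mathrm{M}\mathbb{T}(k)$ as defined (including the left child $(1,k+2,1)$ itself as the root of $\mathrm{M}\mathbb{T}(k)$). Once this identification is made explicit, the corollary is a one-line consequence of the proposition, and no further argument is needed.
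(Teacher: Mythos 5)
Your proposal is correct and coincides with the paper's own (implicit) argument: the paper derives the corollary immediately from Proposition \ref{prop:all-markov} by the same observation that $\mathrm{M}\mathbb{T}(k)$ is the full subtree rooted at the left child, so exactly one of the two occurrences guaranteed by the proposition lies in it. Nothing is missing.
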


By using the $k$-generalized Markov equation, we have
\begin{align*}
\dfrac{a^2+kab+b^2}{c}&=(3+3k)ab-c-k(a+b),\\
\dfrac{b^2+kbc+c^2}{a}&=(3+3k)bc-a-k(b+c),
\end{align*}
and it is another presentation of switched elements.

The following is a basic property of the $k$-generalized Markov triple:

\begin{proposition}[\cite{gyomatsu}*{Corollary 8}]\label{relatively-prime}
For every $k$-generalized Markov triple $(a,b,c)$, all pairs in $a,b$ and $c$ are relatively prime.
\end{proposition}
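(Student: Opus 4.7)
The plan is to prove pairwise coprimeness by induction on the depth of a triple in the wide $k$-generalized Markov tree $\mathrm{WM}\mathbb{T}(k)$. By Proposition~\ref{prop:all-markov}, every $k$-generalized Markov triple appears in this tree, so it is enough to verify that (i) the root $(1,1,1)$ has pairwise coprime entries, and (ii) pairwise coprimeness is preserved under both the left and the right Vieta jumping. Item (i) is immediate.

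For (ii), suppose $(a,b,c)$ is pairwise coprime and consider its left child $(a, c', b)$, where $c' = (a^{2}+kab+b^{2})/c$; the right jumping case is symmetric under swapping $a \leftrightarrow c$. The pair $(a,b)$ is carried over from the parent, so only $\gcd(a,c')$ and $\gcd(b,c')$ need to be checked. Reading the $k$-generalized Markov equation as a monic quadratic in $c$ and applying Vieta's formulas gives the key identity $cc' = a^{2} + kab + b^{2}$. Now let $d$ be a common divisor of $a$ and $c'$. Then $d$ divides $cc' = a^{2}+kab+b^{2}$ and also divides $a^{2}+kab$, hence $d \mid b^{2}$; combined with $d \mid a$ and the inductive hypothesis $\gcd(a,b)=1$, this forces $d=1$. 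The argument for $\gcd(b,c')=1$ is identical after swapping the roles of $a$ and $b$, using $d \mid cc' - b^{2} - kab = a^{2}$.

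There is really no hard step here: once Proposition~\ref{prop:all-markov} licenses an induction on tree depth, the proof reduces to the algebraic identity $cc' = a^{2}+kab+b^{2}$ and two one-line divisibility arguments. The only point requiring minor care is clerical, namely to ensure that both the left and the right Vieta jumpings are treated, but by the full symmetry of $\mathrm{GME}(k)$ in its three variables the two cases are structurally identical, so a single argument suffices.
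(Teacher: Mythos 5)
Your proof is correct, but there is nothing in the paper to compare it against: Proposition \ref{relatively-prime} is imported verbatim from Corollary 8 of \cite{gyomatsu} and is stated here without proof. Your argument --- induction along $\mathrm{WM}\mathbb{T}(k)$, with the Vieta product identity $cc'=a^{2}+kab+b^{2}$ driving the inductive step --- is the standard way to establish this, and every step checks out: from $d\mid a$ and $d\mid c'$ you get $d\mid cc'$ and $d\mid a^{2}+kab$, hence $d\mid b^{2}$, and $\gcd(a,b)=1$ forces $d=1$; the pair $\gcd(b,c')$ and the right jumping are handled symmetrically, and the pair $(a,b)$ is inherited from the parent. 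The one dependency worth flagging is that your induction is licensed by Proposition \ref{prop:all-markov} (every solution occurs in the tree), which is itself quoted from the same source; within the present paper that is a legitimate reduction, and since completeness of the tree is normally proved by a descent argument that does not presuppose coprimality, no circularity arises.
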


In this subsection, we discuss the relation between $\mathrm{GME}(k)$ and $\mathrm{GSME}(k)$. By a straightforward calculation, we have the following proposition:
\begin{proposition}\label{pr:rationalsolution}
A triple $(a,b,c)$ is one of the rational solutions to $\mathrm{GME}(k)$ if and only if the triple
\[((3+3k)a-k,(3+3k)b-k,(3+3k)c-k)\] is one of the rational solutions to $\mathrm{GSME}(k)$.     
\end{proposition}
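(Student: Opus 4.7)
The plan is to verify the equivalence by a direct algebraic substitution. Set $X := (3+3k)a - k$, $Y := (3+3k)b - k$, $Z := (3+3k)c - k$, so that $(3+3k)a = X+k$ and analogously for $Y$ and $Z$. I would show that, after multiplying through by $(3+3k)^2$, the equation $\mathrm{GME}(k)$ evaluated at $(a,b,c)$ becomes precisely the equation $\mathrm{GSME}(k)$ evaluated at $(X,Y,Z)$. Since $(3+3k)^2 > 0$, this single polynomial identity yields both implications at once.

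First I would multiply both sides of $\mathrm{GME}(k)$ by $(3+3k)^2$ and use the identities
\begin{align*}
(3+3k)^2 a^2 &= (X+k)^2, \\
(3+3k)^2 ab &= (X+k)(Y+k), \\
(3+3k)^3 abc &= (X+k)(Y+k)(Z+k),
\end{align*}
together with their analogues in $Y$ and $Z$. This rewrites $\mathrm{GME}(k)$ in the shifted variables as
\[
(X+k)^2+(Y+k)^2+(Z+k)^2+k\bigl[(X+k)(Y+k)+(Y+k)(Z+k)+(Z+k)(X+k)\bigr]=(X+k)(Y+k)(Z+k).
\]

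Next I would expand each group: the sum of squares contributes $X^2+Y^2+Z^2 + 2k(X+Y+Z) + 3k^2$; the sum of pairwise products contributes $XY+YZ+ZX+2k(X+Y+Z)+3k^2$; and the triple product equals $XYZ+k(XY+YZ+ZX)+k^2(X+Y+Z)+k^3$. Substituting and cancelling the common term $k(XY+YZ+ZX)$ from both sides leaves
\[
X^2+Y^2+Z^2+(k^2+2k)(X+Y+Z)+2k^3+3k^2=XYZ,
\]
which is exactly $\mathrm{GSME}(k)$. There is no genuine obstacle; the argument is pure bookkeeping, and the only things to watch are that the three shift-induced contributions to the linear coefficient ($2k$ from the squares, $2k^2$ from the $k$-weighted pairwise products, and $-k^2$ from the triple product) sum to $k^2+2k$, and that the constant terms combine as $3k^2+3k^3-k^3=2k^3+3k^2$, after which the equivalence of rational solutions follows immediately.
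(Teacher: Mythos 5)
Your computation is correct and is exactly the ``straightforward calculation'' the paper alludes to (it gives no written proof of this proposition): multiply $\mathrm{GME}(k)$ by $(3+3k)^2$, substitute $(3+3k)a=X+k$ etc., and expand to recover $\mathrm{GSME}(k)$, with the invertibility of the affine change of variables over $\QQ$ giving both directions. All the bookkeeping checks out ($2k+2k^2-k^2=k^2+2k$ for the linear coefficient and $3k^2+3k^3-k^3=2k^3+3k^2$ for the constant), so nothing further is needed.
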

By Proposition \ref{pr:rationalsolution}, if $(a,b,c)$ is a positive integer solution to GME$(k)$, then $((3+3k)a-k,(3+3k)b-k,(3+3k)c-k)$ is a positive integer solution to GSME$(k)$. In the case where $k=0$, the converse holds (see \cite{aig}*{Proposition 2.2}), but in general, this does not hold. 

\begin{example}
We set $k=4$. Then $(9,9,22)$ is a positive integer solution to $\mathrm{GSME}(4)$, but the corresponding solution $\left(\dfrac{13}{15},\dfrac{13}{15},\dfrac{26}{15}\right)$ to $\mathrm{GME}(4)$ is not an integer triple.
\end{example}

In parallel with $\mathrm{GME}(k)$, there is an algorithm to obtain a new positive integer solution to $\mathrm{GSME}(k)$ from a given solution.

\begin{proposition}\label{pr:vietajumpGSME}
Let $(a,b,c)$ be an integer solution to $\mathrm{GSME}(k)$. Then
\[(a,ab-c-k^2-2k,b)\quad \text{and}\quad  (b,bc-a-k^2-2k,c)\]
are also integer solutions to $\mathrm{GSME}(k)$.
\end{proposition}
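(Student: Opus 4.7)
The plan is to run the standard Vieta jumping argument on $\mathrm{GSME}(k)$, exploiting the fact that the equation
\[x^2+y^2+z^2+(k^2+2k)(x+y+z)+2k^3+3k^2=xyz\]
is symmetric in $x,y,z$. Because of this symmetry, it suffices to prove that from a solution $(a,b,c)$ one obtains another solution by replacing a single coordinate; the two transformations in the statement then arise by permuting the entries.

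The first step is to fix $x=a$ and $y=b$ and read $\mathrm{GSME}(k)$ as a quadratic polynomial in $z$:
\[z^2-\bigl(ab-(k^2+2k)\bigr)z+\bigl(a^2+b^2+(k^2+2k)(a+b)+2k^3+3k^2\bigr)=0.\]
By hypothesis $c$ is one root of this quadratic. Denoting the other root by $c'$, Vieta's formula gives
\[c+c'=ab-(k^2+2k),\qquad\text{hence}\qquad c'=ab-c-k^2-2k.\]
In particular $c'$ is an integer, since $a,b,c,k$ are integers, and $(a,b,c')$ is a solution to $\mathrm{GSME}(k)$. Re-ordering the entries and using the symmetry of the equation, $(a,ab-c-k^2-2k,b)=(a,c',b)$ is also a solution, which is the first assertion.

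For the second assertion I would repeat the same argument with the roles reversed: fix $y=b$, $z=c$ and view $\mathrm{GSME}(k)$ as a quadratic in $x$. If $a$ is one root, Vieta's formula gives the other root as $a'=bc-a-k^2-2k$, so $(a',b,c)$ and therefore $(b,bc-a-k^2-2k,c)=(b,a',c)$ is a solution.

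There is essentially no obstacle: the whole content is the symmetry of $\mathrm{GSME}(k)$ together with one application of Vieta's formulas. The only thing worth checking carefully is the coefficient of the linear term in $z$, which determines that the sum of the two roots is precisely $ab-k^2-2k$; this matches the formula $ab-c-k^2-2k$ that appears in the statement.
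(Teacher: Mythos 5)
Your proof is correct, but it takes a different route from the paper's. You work directly with $\mathrm{GSME}(k)$: viewing the equation as a monic quadratic in $z$ with leading coefficient $1$ and linear coefficient $-(ab-k^2-2k)$, Vieta's formulas give the second root $c'=ab-c-k^2-2k$, which is manifestly an integer, and the symmetry of the equation in $x,y,z$ handles the reordering and the second transformation. The computation of the linear coefficient checks out, so the argument is complete and self-contained. The paper instead routes the claim through the correspondence with $\mathrm{GME}(k)$ (Proposition \ref{pr:rationalsolution}): it converts $(a,b,c)$ to the rational $\mathrm{GME}(k)$-solution $\left(\frac{a+k}{3+3k},\frac{b+k}{3+3k},\frac{c+k}{3+3k}\right)$, applies the already-established Vieta jumping for $\mathrm{GME}(k)$, and converts back, obtaining $ab-c-k^2-2k$ as $(3+3k)\cdot\frac{ab-c-k^2-k}{3+3k}-k$. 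Your approach is more elementary and avoids any dependence on the $\mathrm{GME}(k)$ machinery; the paper's approach is slightly less direct but reinforces the dictionary between the two equations that the paper uses throughout (e.g.\ it makes the terminology ``left/right Vieta jumping of $\mathrm{GSME}(k)$'' transparent as the transport of the $\mathrm{GME}(k)$ jumpings). Both are valid proofs of the proposition.
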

\begin{proof}
We only prove that $(a,ab-c-k^2-2k,b)$ is an integer solution. By Proposition \ref{pr:rationalsolution}, $\left(\dfrac{a+k}{3+3k},\dfrac{b+k}{3+3k},\dfrac{c+k}{3+3k}\right)$ is a rational solution to $\mathrm{GME}(k)$. Applying the left Vieta jumping of $\mathrm{GME}(k)$, we have a rational solution to $\mathrm{GME}(k)$,
$\left(\dfrac{a+k}{3+3k},\dfrac{ab-c-k^2-k}{3+3k},\dfrac{b+k}{3+3k}\right)$. By Proposition \ref{pr:rationalsolution} again, $(a,ab-c-k^2-2k,b)$ is a rational solution to $\mathrm{GSME}(k)$. Clearly, it is an integer solution. 
\end{proof}

A positive integer solution $(a,b,c)$ to $\mathrm{GSME}(k)$ is called an \emph{induced (positive) solution (from $\mathrm{GME}(k)$)} if $\left(\dfrac{a+k}{3+3k},\dfrac{b+k}{3+3k},\dfrac{c+k}{3+3k}\right)$ is a positive integer triple.
\begin{proposition}
Let $(a,b,c)$ be an induced solution to $\mathrm{GSME}(k)$. Then
\[(a,ab-c-k^2-2k,b)\quad \text{and}\quad  (b,bc-a-k^2-2k,c)\]
are also induced solutions to $\mathrm{GSME}(k)$.    
\end{proposition}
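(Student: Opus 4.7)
The plan is to transport the Vieta jumping along the correspondence of Proposition~\ref{pr:rationalsolution} and then verify by a short algebraic identity that the resulting formula matches the one in the statement. I only treat the first triple; the second is entirely symmetric.

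First I would unwind the induced hypothesis: by definition the numbers $a' := (a+k)/(3+3k)$, $b' := (b+k)/(3+3k)$, $c' := (c+k)/(3+3k)$ are positive integers, and Proposition~\ref{pr:rationalsolution} then promotes $(a',b',c')$ to a positive integer solution to $\mathrm{GME}(k)$. Next I would check that the left Vieta jumping of $\mathrm{GME}(k)$ produces another positive integer solution: using $\mathrm{GME}(k)$, the rational number
\[d' := \frac{(a')^2 + ka'b' + (b')^2}{c'}\]
rewrites as $d' = (3+3k)a'b' - c' - k(a'+b')$, hence is an integer; and since $a',b',c'>0$ and the numerator is positive, it is strictly positive. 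Thus $(a',d',b')$ is a positive integer solution to $\mathrm{GME}(k)$.

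Finally, applying Proposition~\ref{pr:rationalsolution} in the other direction shows that
\[\bigl((3+3k)a'-k,\ (3+3k)d'-k,\ (3+3k)b'-k\bigr) = \bigl(a,\ (3+3k)d'-k,\ b\bigr)\]
is a positive integer solution to $\mathrm{GSME}(k)$, and it is induced by construction since $d'$ is already a positive integer. All that remains is to verify $(3+3k)d' - k = ab - c - k^2 - 2k$; substituting $(3+3k)a' = a+k$, $(3+3k)b' = b+k$, $(3+3k)c' = c+k$ into the formula for $d'$ gives
\[(3+3k)d' = (a+k)(b+k) - (c+k) - k(a+k) - k(b+k) = ab - c - k - k^2,\]
which is exactly $(ab - c - k^2 - 2k) + k$, as needed. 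The main (and essentially only) obstacle is bookkeeping: tracking positivity through the jumping step for $\mathrm{GME}(k)$ and converting cleanly between the two equations. Conceptually the statement is nothing more than the transport of the already-discussed Vieta jumping for $\mathrm{GME}(k)$ through the bijection of Proposition~\ref{pr:rationalsolution}.
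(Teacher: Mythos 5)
Your proof is correct and follows essentially the same route as the paper: the paper's argument (for this proposition together with the immediately preceding one on integer solutions) likewise transports the Vieta jumping through the correspondence of Proposition~\ref{pr:rationalsolution}, observing that the middle entry equals $(3+3k)d'-k$ with $d'=\frac{(a')^2+ka'b'+(b')^2}{c'}$ a positive integer, whence $ab-c-k^2-2k\geq 3+2k>0$. Your write-up just makes explicit the bookkeeping that the paper leaves implicit.
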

\begin{proof}
We only prove the case where $(a, ab-c-k^2-2k,b)$. It suffices to show that $ab-c-k^2-2k$ is positive. Since $\dfrac{ab-c-k^2-k}{3+3k}\geq 1$, we have $ab-c-k^2-k\geq 3+3k$. Therefore, we have $ab-c-k^2-2k\geq 3+2k\geq 1$.
\end{proof}

In parallel with $\mathrm{GME}(k)$, the transformations
\[(a,b,c)\mapsto (a,ab-c-k^2-2k,b),\quad (a,b,c)\mapsto(b,bc-a-k^2-2k,c)\]
are called the \emph{left Vieta jumping} and the \emph{right Vieta jumping}, respectively.

We denote by $\mathrm{SM}\mathbb T(k)$ the tree obtained from $\mathrm{M}\mathbb T(k)$ by replacing $(a,b,c)$ with $((3+3k)a-k,(3+3k)b-k,(3+3k)c-k)$. From the above discussion, we have the corollary:

\begin{corollary}
All vertices in $\mathrm{SM}\mathbb{T}(k)$ are induced solutions to $\mathrm{GSME}(k)$, and each induced solution $(a,b,c)$ to $\mathrm{GSME}(k)$  with $b> \max\{a,c\}$ appears exactly once in $\mathrm{SM}\mathbb T(k)$. 
\end{corollary}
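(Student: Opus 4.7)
The plan is to transfer the statement from $\mathrm{M}\mathbb{T}(k)$ to $\mathrm{SM}\mathbb{T}(k)$ via the relabelling map $\varphi\colon (a,b,c) \mapsto ((3+3k)a-k,(3+3k)b-k,(3+3k)c-k)$, using Corollary \ref{cor:all-markov} as the analogous statement for $k$-generalized Markov triples and Proposition \ref{pr:rationalsolution} as the bridge between $\mathrm{GME}(k)$ and $\mathrm{GSME}(k)$.

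First I would establish that every vertex of $\mathrm{SM}\mathbb{T}(k)$ is an induced positive integer solution to $\mathrm{GSME}(k)$. By Corollary \ref{cor:all-markov}, each vertex of $\mathrm{M}\mathbb{T}(k)$ is a $k$-generalized Markov triple $(a',b',c')$, so in particular a positive integer triple. Proposition \ref{pr:rationalsolution} shows $\varphi(a',b',c')$ is a rational solution to $\mathrm{GSME}(k)$, and since $3+3k$ and $k$ are integers it is clearly an integer triple; positivity of the entries follows from $a',b',c'\geq 1$, giving $(3+3k)a'-k \geq 3+2k \geq 1$. By construction, the preimage under $\varphi$ of this vertex is $(a',b',c')$, a positive integer triple, so by definition the vertex is an induced solution.

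Next I would handle the appearance statement. Let $(a,b,c)$ be an induced solution to $\mathrm{GSME}(k)$ with $b>\max\{a,c\}$. By definition, $(a',b',c'):=\varphi^{-1}(a,b,c)$ is a positive integer triple, and Proposition \ref{pr:rationalsolution} guarantees it is a rational solution to $\mathrm{GME}(k)$; hence it is a $k$-generalized Markov triple. Since the scalar map $x\mapsto (3+3k)x-k$ is strictly increasing on positive reals, the inequality $b>\max\{a,c\}$ transfers to $b'>\max\{a',c'\}$. By Corollary \ref{cor:all-markov}, $(a',b',c')$ appears exactly once in $\mathrm{M}\mathbb{T}(k)$. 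Applying $\varphi$ vertex-wise, which is how $\mathrm{SM}\mathbb{T}(k)$ is constructed, we conclude that $(a,b,c)$ appears exactly once in $\mathrm{SM}\mathbb{T}(k)$.

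There is essentially no obstacle here: the corollary is a direct transport of Corollary \ref{cor:all-markov} across the bijection $\varphi$ between $k$-generalized Markov triples and induced solutions to $\mathrm{GSME}(k)$. The only point requiring any care is to remark that $\varphi$ is strictly monotone (so the ordering condition $b>\max\{a,c\}$ is preserved in both directions) and that the inverse image of an \emph{induced} solution is a positive integer triple by definition, ensuring we land inside the domain covered by Corollary \ref{cor:all-markov}.
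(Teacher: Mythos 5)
Your proposal is correct and matches the paper's (implicit) argument: the paper derives this corollary directly "from the above discussion," namely by transporting Corollary \ref{cor:all-markov} through the relabelling $(a,b,c)\mapsto((3+3k)a-k,(3+3k)b-k,(3+3k)c-k)$ using Proposition \ref{pr:rationalsolution} and the definition of induced solutions, exactly as you do. Your explicit checks of integrality, positivity, and monotonicity are the right (and only) details to verify.
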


\section{Generalized Cohn tree}
\subsection{$k$-generalized Cohn tree}
We begin with defining the $k$-generalized Cohn matrix.
\begin{definition}
Let $k\in \mathbb {Z}_{\geq 0}$. An element $P=\begin{bmatrix}p_{11}&p_{12}\\p_{21}&p_{22}\end{bmatrix}$ of $SL(2,\mathbb Z)$ is called a \emph{$k$-generalized Cohn matrix} if
\begin{itemize}
    \item [(1)] $p_{12}$ is a $k$-generalized Markov number, and
    \item [(2)] $\mathrm{tr}(P)=(3+3k)p_{12}-k$.
\end{itemize}
\end{definition}
The equality in the condition (2) has the following presentation:
\[\mathrm{tr}(P)=\begin{bmatrix}
    3+3k & 0
\end{bmatrix}P\begin{bmatrix}
    0 \\ 1
\end{bmatrix}-k.\]
We recall the definition of $k$-generalized Cohn triple:
\begin{definition}\label{def:gen-Cohn-triple}
For $k\in \mathbb {Z}_{\geq 0}$, a triple $(P,Q,R)$ is called a \emph{$k$-generalized Cohn triple} if
\begin{itemize}
    \item [(1)] $P,Q$ and $R$ are $k$-generalized Cohn matrices,
    \item[(2)] $Q=PR-S$, where $S=\begin{bmatrix}
        k&0\\3k^2+3k &k
    \end{bmatrix}$, and
    \item[(3)] $(p_{12},q_{12},r_{12})$ is a $k$-generalized Markov triple, where $p_{12},q_{12}$ and $r_{12}$ are the $(1,2)$-entries of $P,Q$ and $R$, respectively.
\end{itemize}
The triple $(P,Q,R)$ is said to be \emph{associated with} $(p_{12},q_{12},r_{12})$.
\end{definition}

\begin{remark}
 Let $(P,Q,R)$ be a $k$-generalized Markov triple associated with $(a,b,c)$. By the definition of $k$-generalized Cohn triple, $(\mathrm{tr}(P),\mathrm{tr}(Q),\mathrm{tr}(R))$ is an induced solution to $\mathrm{GSME}(k)$ corresponding to $(a,b,c)$.     
\end{remark}

The definition of $k$-generalized Cohn matrix does not refer to its existence. We will prove that for a positive integer solution $(a,b,c)$ to $\mathrm{GME}(k)$  with $b\geq \max\{a,c\}$, there exists a $k$-generalized Cohn triple $(P,Q,R)$ such that $a=p_{12},b=q_{12}$ and $c=r_{12}$.

First, we prove the case where $(a,b,c)=(1,1,1)$. For every $\ell\in \mathbb Z$, we set
\begin{align*}
    P_{1;\ell}&=\begin{bmatrix}
        \ell&1\\-\ell^2+2k\ell+3\ell-1&-\ell+2k+3
    \end{bmatrix},\\
    Q_{1;\ell}&=\begin{bmatrix}
        k+\ell+1&1\\k^2-\ell^2+3k+\ell+1&k-\ell+2
    \end{bmatrix}, \text{ and}\\
    R_{1;\ell}&=\begin{bmatrix}
        2k+\ell+2&1\\-\ell^2-2k\ell+2k-\ell+1&-\ell+1
    \end{bmatrix}.
\end{align*}
\begin{proposition}\label{cohn-mat-with-111}
For every $\ell\in \mathbb Z$, the triple $(P_{1;\ell},Q_{1;\ell},R_{1;\ell})$ is a $k$-generalized Cohn triple. Conversely, for a $k$-generalized Cohn triple $(P,Q,R)$ satisfying $(p_{12},q_{12},r_{12})=(1,1,1)$, there exists $\ell\in \mathbb Z$ such that $(P,Q,R)=(P_{1;\ell},Q_{1;\ell},R_{1;\ell})$.
\end{proposition}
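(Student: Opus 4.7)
The plan is to treat the two halves of the proposition separately. For the direct claim, I would first note that $(1,1,1)$ is a $k$-generalized Markov triple (it is the root of $\mathrm{WM}\mathbb T(k)$, or alternatively one checks $3 + 3k = (3+3k)\cdot 1$ directly). I would then verify from the explicit entries that each of $P_{1;\ell}, Q_{1;\ell}, R_{1;\ell}$ lies in $SL(2,\mathbb Z)$ (i.e.\ has determinant $1$), has $(1,2)$-entry equal to $1$, and has trace equal to $2k+3 = (3+3k)\cdot 1 - k$; all three checks are immediate polynomial identities in $k$ and $\ell$. It remains to verify the matrix identity $Q_{1;\ell} = P_{1;\ell}R_{1;\ell} - S$, which is done entry-by-entry by expanding the product $P_{1;\ell}R_{1;\ell}$ and comparing with $Q_{1;\ell} + S$.

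For the converse, the key preliminary observation is that any $P \in SL(2,\mathbb Z)$ with $p_{12}=1$ and $\mathrm{tr}(P)=2k+3$ is determined by the single integer $p_{11}$: the trace condition gives $p_{22}=2k+3-p_{11}$, and $\det P = 1$ with $p_{12}=1$ gives $p_{21} = p_{11}p_{22} - 1$. Hence, up to reindexing, the families $\{P_{1;u}\}$, $\{Q_{1;u}\}$, $\{R_{1;u}\}$ with $u \in \mathbb Z$ each enumerate all $k$-generalized Cohn matrices with $(1,2)$-entry equal to $1$. Setting $\ell := p_{11}$, I would then use $Q=PR-S$ to pin down $R$ and $Q$. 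The $(1,2)$-entry yields
\[1 = q_{12} = p_{11}\, r_{12} + p_{12}\, r_{22} = \ell + (2k+3-r_{11}),\]
forcing $r_{11} = \ell + 2k + 2$, so $R = R_{1;\ell}$. A short calculation of the $(1,1)$-entry of $P_{1;\ell}R_{1;\ell} - S$ then gives $q_{11} = \ell + k + 1$, so $Q = Q_{1;\ell}$; the remaining entries of $Q$ are automatic since $Q$ is a Cohn matrix whose $(1,2)$- and $(1,1)$-entries are already prescribed.

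The main obstacle is purely computational: the identity $Q_{1;\ell} = P_{1;\ell}R_{1;\ell} - S$ in the direct claim has a $(2,1)$-entry that requires multiplying two polynomials quadratic in $\ell$ (and carrying $k$ as a parameter), producing cubic cross-terms in $\ell$ which must cancel precisely against those produced by the other summand. No conceptual insight is needed beyond the trace-and-determinant parametrization of Cohn matrices with a fixed $(1,2)$-entry, but the bookkeeping in the $(2,1)$ computation must be executed carefully to avoid sign errors.
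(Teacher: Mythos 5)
Your proposal is correct and follows essentially the same route as the paper: the forward direction is a direct entrywise verification, and the converse rests on the observation that a $k$-generalized Cohn matrix with $(1,2)$-entry $1$ is determined by its $(1,1)$-entry via the trace and determinant conditions, so the whole triple is pinned down by $\ell=p_{11}$ together with $Q=PR-S$. Your computations (e.g.\ $r_{11}=\ell+2k+2$ and $q_{11}=\ell+k+1$) check out against the stated matrices.
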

\begin{proof}
The former statement can be checked directly. Since $(P,Q,R)$ is determined by $p_{11}$ and the conditions (1)--(3) of the $k$-generalized Cohn triple, the latter statement follows (see also the proof of \cite{aig}*{Theorem 4.8}, which is the special case $k=0$ of this proposition).  
\end{proof}
For a $k$-generalized Markov triple $(a,b,c)$ with $b> \max\{a,c\}$, we prove the existence of a $k$-generalized Cohn triple corresponding to $(a,b,c)$ by using induction on the distance from $(1,1,1)$ in $\mathrm{WM}\mathbb T(k)$.
To this end, we define another binary tree, the wide \emph{$k$-generalized Cohn tree} $\mathrm{WGC}\mathbb T(k,\ell)$ for $\ell\in \mathbb Z$ as follows:
\begin{itemize}
\item [(1)] the root vertex is $(P_{1;\ell},Q_{1;\ell},R_{1;\ell})$, and
\item[(2)]every vertex $(P,Q,R)$ has the following two children;
\[\begin{xy}(0,0)*+{(P,Q,R)}="1",(30,-15)*+{(Q,QR-S,R).}="2",(-30,-15)*+{(P,PQ-S,Q)}="3", \ar@{-}"1";"2"\ar@{-}"1";"3"
\end{xy}\]
\end{itemize}
We prove the following theorem:
\begin{theorem}\label{thm:cohn-markov}
Let $(a,b,c)$ be a $k$-generalized Markov triple. If $(P,Q,R)$ is a $k$-generalized Cohn triple associated with $(a,b,c)$, then $(P,PQ-S,Q)$ (resp. $(Q,QR-S,R)$) is a $k$-generalized Cohn triple associated with $\left(a,c',b\right)$ (resp. $\left(b,a',c\right)$), where $c'=\dfrac{a^2+kab+b^2}{c}$ and $a'=\dfrac{b^2+kbc+c^2}{a}$.    
\end{theorem}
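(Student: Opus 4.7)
The plan is to verify directly the three conditions of Definition~\ref{def:gen-Cohn-triple} for the new triple $(P, PQ - S, Q)$; the right case $(Q, QR - S, R)$ follows from an entirely analogous calculation (with the role of $P$ played by $R$, again using $PR = Q + S$ and $\det R = 1$). Condition (2) holds tautologically, since the middle term is $P \cdot Q - S$ by construction. For condition (3), the left Vieta jump of $(a, b, c)$ is the $k$-generalized Markov triple $(a, c', b)$ where $c' = (3+3k)ab - c - k(a+b) = (a^2 + kab + b^2)/c$, so it is enough to prove that $(PQ - S)_{12} = c'$. Condition (1) then reduces, since integrality is automatic, to the two identities $\det(PQ - S) = 1$ and $\mathrm{tr}(PQ - S) = (3+3k) c' - k$.

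For the $(1,2)$-entry, expand $(PQ - S)_{12} = p_{11} b + a q_{22}$. From $Q = PR - S$, substitute $q_{22} = p_{21} c + p_{22} r_{22} - k$; then apply $a p_{21} = p_{11} p_{22} - 1$ (from $\det P = 1$) together with $p_{11} c + a r_{22} = (PR)_{12} = (Q + S)_{12} = b$. The terms collapse to $b \cdot \mathrm{tr}(P) - c - ka$, and $\mathrm{tr}(P) = (3+3k) a - k$ then produces $(PQ - S)_{12} = (3+3k) ab - c - k(a+b) = c'$.

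For the trace, write $PQ = P(PR - S) = P^2 R - PS$ and apply Cayley--Hamilton $P^2 = \mathrm{tr}(P)\, P - I$ to get $\mathrm{tr}(PQ) = \mathrm{tr}(P)\,\mathrm{tr}(PR) - \mathrm{tr}(R) - \mathrm{tr}(PS)$. Using $\mathrm{tr}(PR) = \mathrm{tr}(Q) + 2k$, the direct evaluation $\mathrm{tr}(PS) = k\,\mathrm{tr}(P) + 3k(k+1) a$, and the Cohn trace formulas for $\mathrm{tr}(P), \mathrm{tr}(Q), \mathrm{tr}(R)$, the right-hand side simplifies, after factoring out $(3+3k)$, to $(3+3k) c' + k$; hence $\mathrm{tr}(PQ - S) = (3+3k) c' - k$.

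For the determinant, use the $2 \times 2$ identity $\det(M + N) = \det M + \det N + \mathrm{tr}(M)\,\mathrm{tr}(N) - \mathrm{tr}(MN)$ with $M = PQ$, $N = -S$ to obtain $\det(PQ - S) = 1 + k^2 - 2k\,\mathrm{tr}(PQ) + \mathrm{tr}(PQS)$. Decomposing $S = kI + N_{0}$ with $N_{0} = \begin{bmatrix} 0 & 0 \\ 3k^{2} + 3k & 0 \end{bmatrix}$ yields $\mathrm{tr}(PQS) = k\,\mathrm{tr}(PQ) + 3k(k+1)(PQ)_{12} = k\,\mathrm{tr}(PQ) + 3k(k+1) c'$, and substituting $\mathrm{tr}(PQ) = (3+3k) c' + k$ collapses everything to $1$. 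This final cancellation is the main obstacle: it depends critically on the lower-left entry of $S$ being exactly $3k^{2} + 3k$, which is precisely tuned to the Cohn trace formula and the $k$-generalized Markov equation. In essence, the content of the theorem is that this $S$ is the unique correction matrix making the $k$-generalized Cohn structure closed under Vieta jumping.
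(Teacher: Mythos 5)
Your proof is correct, and it reaches the conclusion by a genuinely different route from the paper's. The paper never computes $(PQ-S)_{12}$ from the matrix entries at all: it first shows that the traces transform by the $\mathrm{GSME}(k)$ Vieta jump, $\mathrm{tr}(PQ-S)=\mathrm{tr}(P)\mathrm{tr}(Q)-\mathrm{tr}(R)-k^2-2k$ (using $\mathrm{tr}(SP^{-1})=-k^2$), and separately establishes the structural identity $\mathrm{tr}(PQ-S)=\begin{bmatrix}3+3k&0\end{bmatrix}(PQ-S)\begin{bmatrix}0\\1\end{bmatrix}-k$ via the key Lemma~\ref{lem:M3+3kM} (the relation $MN_0M=(\mathrm{tr}(M)+k)M+N_0$ with $N_0=\begin{bmatrix}0&0\\3+3k&0\end{bmatrix}$); comparing the two then forces $(PQ-S)_{12}=c'$ indirectly. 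You instead extract $(PQ-S)_{12}=b\,\mathrm{tr}(P)-c-ka=c'$ by direct entry manipulation using $\det P=1$ and $(PR)_{12}=b$, bypassing Lemma~\ref{lem:M3+3kM} entirely, and then verify the trace condition by substituting the Cohn trace formulas into the Cayley--Hamilton expansion of $\mathrm{tr}(P^2R-PS)$. Your determinant step via the polarization identity $\det(M+N)=\det M+\det N+\mathrm{tr}(M)\mathrm{tr}(N)-\mathrm{tr}(MN)$ is equivalent to the paper's direct expansion. What your approach buys is elementarity and a shorter path to the $(1,2)$-entry; what the paper's buys is the explicit fact that the trace triples themselves evolve by the $\mathrm{GSME}(k)$ Vieta jumping, which feeds the paper's parallel narrative between $\mathrm{GME}(k)$ and $\mathrm{GSME}(k)$ and is reused in Lemma~\ref{Cohn-Markov2}. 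I verified your claimed symmetric computation for $(Q,QR-S,R)$ as well: $(QR-S)_{12}=b\,\mathrm{tr}(R)-a-kc=a'$ follows the same pattern, so the deferral is legitimate.
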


To prove Theorem \ref{thm:cohn-markov}, we introduce two lemmas:

\begin{lemma}[see \cite{aig}*{Lemma 4.2}]\label{lem:basic-property-trace}
Let $A$ and $B\in SL(2,\mathbb Z)$. The following equalities hold:
\begin{itemize}
    \item [(1)] $\mathrm{tr}(A)=\mathrm{tr}(A^{-1})$,
    \item[(2)] $\mathrm{tr}(AB)= \mathrm{tr}(A)\mathrm{tr}(B)-\mathrm{tr}(AB^{-1})$, and
    \item[(3)] $A^2=\mathrm{tr}(A)A-I$.
\end{itemize}
\end{lemma}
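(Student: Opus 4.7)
The plan is to verify the three identities by direct matrix manipulation, writing a generic $A = \begin{bmatrix} a & b \\ c & d \end{bmatrix} \in SL(2,\mathbb{Z})$ (so $ad-bc=1$) and then bootstrapping from the elementary identities to the multiplicative one.

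First I would dispatch identity (1) immediately from the closed formula $A^{-1} = \begin{bmatrix} d & -b \\ -c & a \end{bmatrix}$, whose trace is $d+a = \mathrm{tr}(A)$. For identity (3), I would invoke the Cayley--Hamilton theorem for $2\times 2$ matrices: the characteristic polynomial of $A$ is $\lambda^{2}-\mathrm{tr}(A)\lambda + \det(A)$, and $\det(A)=1$ yields $A^{2}-\mathrm{tr}(A)A+I=0$; equivalently, one can just square the $2\times 2$ matrix entrywise and use $ad-bc=1$ to check the four coordinate equalities if one prefers to avoid invoking Cayley--Hamilton.

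For identity (2), the cleanest route is to deduce it from (3) rather than compute entry by entry. Multiplying $A^{2}=\mathrm{tr}(A)A-I$ on the left by $A^{-1}$ gives the key auxiliary relation
\[
A + A^{-1} = \mathrm{tr}(A)\, I.
\]
Multiplying this identity on the right by $B$ and taking traces produces
\[
\mathrm{tr}(AB) + \mathrm{tr}(A^{-1}B) = \mathrm{tr}(A)\,\mathrm{tr}(B).
\]
It remains to convert $\mathrm{tr}(A^{-1}B)$ into $\mathrm{tr}(AB^{-1})$. Using cyclicity of the trace together with (1), and noting that $(AB^{-1})^{-1} = BA^{-1}$, one has $\mathrm{tr}(AB^{-1}) = \mathrm{tr}(BA^{-1}) = \mathrm{tr}(A^{-1}B)$, and substituting this into the previous display gives the claimed identity.

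No serious obstacle arises, since each of the three statements is a standard fact of linear algebra over $SL(2,\mathbb{Z})$ (indeed over $SL(2,R)$ for any commutative ring $R$). The only care one must exercise is bookkeeping the difference between $A^{-1}B$ and $AB^{-1}$ when proving (2), which is precisely what cyclicity of trace together with part (1) is designed to handle.
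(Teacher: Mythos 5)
Your proof is correct: part (1) follows from the adjugate formula for determinant-one matrices, part (3) is Cayley--Hamilton, and your derivation of (2) from the relation $A+A^{-1}=\mathrm{tr}(A)\,I$ together with $\mathrm{tr}(AB^{-1})=\mathrm{tr}\bigl((AB^{-1})^{-1}\bigr)=\mathrm{tr}(BA^{-1})=\mathrm{tr}(A^{-1}B)$ is valid. The paper does not prove this lemma at all --- it simply cites it as Lemma 4.2 of \cite{aig} --- so there is no in-paper argument to compare against; your derivation is the standard one.
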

\begin{lemma}\label{lem:M3+3kM}
For a $k$-generalized Cohn matrix $M$, the following equalities hold:
\begin{align*}
    M\begin{bmatrix}
    0&0\\ 3+3k& 0
\end{bmatrix}M&=(\mathrm{tr}(M)+k)M+\begin{bmatrix}
    0&0\\3+3k& 0
\end{bmatrix},\\
 M^{-1}\begin{bmatrix}
    0&0\\ 3+3k& 0
\end{bmatrix}M^{-1}&=-(\mathrm{tr}(M^{-1})+k)M^{-1}+\begin{bmatrix}
    0&0\\3+3k& 0
\end{bmatrix}.
\end{align*}
\end{lemma}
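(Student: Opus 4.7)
The plan is to prove both identities by a short direct matrix computation, using only the two defining properties of a $k$-generalized Cohn matrix: being in $SL(2,\mathbb{Z})$ (so $\det M = 1$) and the trace condition, which can be rewritten in the clean form $\mathrm{tr}(M)+k = (3+3k)p_{12}$.

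Write $M = \begin{bmatrix} p_{11} & p_{12} \\ p_{21} & p_{22} \end{bmatrix}$ and set $T := \begin{bmatrix} 0 & 0 \\ 3+3k & 0 \end{bmatrix}$; the first identity to verify is $MTM = (3+3k)p_{12}\cdot M + T$. Since $T$ has only one nonzero entry, a direct expansion gives
\begin{align*}
MTM &= \begin{bmatrix} (3+3k)p_{11}p_{12} & (3+3k)p_{12}^{\,2} \\ (3+3k)p_{11}p_{22} & (3+3k)p_{12}p_{22} \end{bmatrix}, \\
(3+3k)p_{12}\cdot M + T &= \begin{bmatrix} (3+3k)p_{11}p_{12} & (3+3k)p_{12}^{\,2} \\ (3+3k)p_{12}p_{21}+(3+3k) & (3+3k)p_{12}p_{22} \end{bmatrix}.
\end{align*}
The $(1,1)$, $(1,2)$, and $(2,2)$ entries match automatically, and the $(2,1)$-entries match precisely when $p_{11}p_{22} - p_{12}p_{21} = 1$, i.e., when $\det M = 1$.

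For the second identity, I would avoid redoing the computation by conjugating the first identity: multiplying $MTM = (\mathrm{tr}(M)+k)M + T$ on both the left and the right by $M^{-1}$ yields $T = (\mathrm{tr}(M)+k)M^{-1} + M^{-1}TM^{-1}$, and rearranging together with $\mathrm{tr}(M^{-1}) = \mathrm{tr}(M)$ from Lemma \ref{lem:basic-property-trace}(1) delivers the desired equality. There is no substantive obstacle here; the lemma is essentially a coincidence engineered by the trace normalization $\mathrm{tr}(M)+k = (3+3k)p_{12}$, and the one observation that streamlines the calculation is that $T$ has rank one, so $MTM$ is determined by the single column and single row of $M$ that $T$ picks out.
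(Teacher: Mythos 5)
Your proof is correct and follows essentially the same route as the paper: a direct entrywise computation of $MTM$ exploiting the rank-one structure of $T$, with $\det M=1$ accounting for the $(2,1)$-entry and the trace normalization $\mathrm{tr}(M)+k=(3+3k)p_{12}$ identifying the scalar. Your derivation of the second identity by conjugating the first with $M^{-1}$ is a clean touch (the paper simply omits that case as analogous), but it is not a substantively different argument.
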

\begin{proof}
We only prove the first equality. We set $M=\begin{bmatrix}
    m_{11}&m_{12}\\ m_{21}& m_{22}
\end{bmatrix}$. Then we have
\begin{align*}
    M\begin{bmatrix}
    0&0\\ 3+3k& 0
\end{bmatrix}M&=M\begin{bmatrix}
    0\\1
\end{bmatrix}
\begin{bmatrix}
    3+3k& 0
\end{bmatrix}M=\begin{bmatrix}
    m_{12}\\m_{22}
\end{bmatrix}
\begin{bmatrix}
    (3+3k)m_{11}&(3+3k)m_{12}
\end{bmatrix}\\
&=(3+3k)\begin{bmatrix}
    m_{11}m_{12}&m_{12}^2\\m_{11}m_{22}&m_{12}m_{22}
\end{bmatrix}=
(3+3k)m_{12}\begin{bmatrix}
    m_{11}&m_{12}\\m_{21}&m_{22}
\end{bmatrix}+
\begin{bmatrix}
    0&0\\ 3+3k& 0
\end{bmatrix}\\
&=(\mathrm{tr}M+k)M+\begin{bmatrix}
    0&0\\3+3k& 0\end{bmatrix}.
\end{align*}
Note that $m_{11}m_{22}-m_{21}m_{12}=1$ is used to show that the fourth equal sign is valid.
\end{proof}
\begin{proof}[Proof of Theorem \ref{thm:cohn-markov}]
We only prove the statement for $(P,PQ-S,Q)$. First, we prove that the transformation $(\mathrm{tr}(P),\mathrm{tr}(Q),\mathrm{tr}(R))\mapsto(\mathrm{tr}(P),\mathrm{tr}(PQ-S),\mathrm{tr}(Q))$ is the left Vieta jumping $(\alpha,\beta,\gamma)\mapsto(\alpha,\alpha\beta-\gamma-k^2-2k,\beta)$ of $\mathrm{GSME}(k)$. Since $(P,Q,R)$ is a $k$-generalized Cohn triple, it suffices to show
\[\mathrm{tr}(PQ-S)=\mathrm{tr}(P)\mathrm{tr}(Q)-\mathrm{tr}(R)-k^2-2k.\]
Since $Q=PR-S$, we have
\begin{align*}
    \mathrm{tr}(PQ-S)&=\mathrm{tr}(P(PR-S))-2k=\mathrm{tr}(P^2R)-\mathrm{tr}(PS)-2k\\
    &\overset{\text{Lemma \ref{lem:basic-property-trace} (2)}}{=}\mathrm{tr}(P)\mathrm{tr}(PR)-\mathrm{tr}(PR^{-1}P^{-1})-\mathrm{tr}(PS)-2k\\
    &=\mathrm{tr}(P)\mathrm{tr}(PR)-\mathrm{tr}(R)-\mathrm{tr}(PS)-2k.
\end{align*}
On the other hand, since 
\[\mathrm{tr}(SP^{-1})=\mathrm{tr}\left(\begin{bmatrix}
    p_{22}k& \ast\\\ast &p_{11}k-p_{12}(3k^2+3k) \end{bmatrix}\right)=k\cdot\mathrm{tr}(P)-(3k^2+3k)p_{12}=-k^2,\] 
we have
\begin{align*}
\mathrm{tr}(P)\mathrm{tr}(Q)-\mathrm{tr}(R)-k^2-2k&=\mathrm{tr}(P)\mathrm{tr}(PR)-\mathrm{tr}(P)\mathrm{tr}(S)-\mathrm{tr}(R)-k^2-2k\\
&\overset{\text{Lemma \ref{lem:basic-property-trace} (2)}}{=}\mathrm{tr}(P)\mathrm{tr}(PR)-\mathrm{tr}(SP)-\mathrm{tr}(SP^{-1})-\mathrm{tr}(R)-k^2-2k\\
&=\mathrm{tr}(P)\mathrm{tr}(PR)-\mathrm{tr}(PS)-\mathrm{tr}(R)-2k.
\end{align*}
Therefore, we have the desired equality.
Next, we prove that $\mathrm{tr}(PQ-S)=(3+3k)m_{12}-k$, where $m_{12}$ is the $(1,2)$-entry of $PQ-S$.
By the above, we have
\begin{align*}
\mathrm{tr}(PQ-S)=&\mathrm{tr}(P)\mathrm{tr}(Q)-\mathrm{tr}(R)-k^2-2k\\
=&\left(\begin{bmatrix}
    3+3k& 0
\end{bmatrix}P\begin{bmatrix}
    0 \\ 1
\end{bmatrix}-k\right)\left(\begin{bmatrix}
    3+3k& 0
\end{bmatrix}Q\begin{bmatrix}
    0 \\ 1
\end{bmatrix}-k\right)\\
&-\left(\begin{bmatrix}
    3+3k& 0
\end{bmatrix}R\begin{bmatrix}
    0 \\ 1
\end{bmatrix}-k\right)-k^2-2k\\
=&\begin{bmatrix}
    3+3k& 0
\end{bmatrix}P\begin{bmatrix}
    0&0\\3+3k& 0
\end{bmatrix}PR\begin{bmatrix}
    0 \\ 1
\end{bmatrix}-k\begin{bmatrix}
    3+3k& 0
\end{bmatrix}PR\begin{bmatrix}
    0 \\ 1
\end{bmatrix}\\
&-k\begin{bmatrix}
    3+3k& 0
\end{bmatrix}P\begin{bmatrix}
    0 \\ 1
\end{bmatrix}-\begin{bmatrix}
    3+3k& 0
\end{bmatrix}R\begin{bmatrix}
    0 \\ 1
\end{bmatrix}-k\\
\overset{\text{Lemma \ref{lem:M3+3kM}}}{=}&\begin{bmatrix}
    3+3k& 0
\end{bmatrix}\left((\mathrm{tr}P+k)P+\begin{bmatrix}
    0&0\\3+3k& 0\end{bmatrix}\right)R\begin{bmatrix}
    0 \\ 1
\end{bmatrix}-k\begin{bmatrix}
    3+3k& 0
\end{bmatrix}PR\begin{bmatrix}
    0 \\ 1
\end{bmatrix}\\
&-k\begin{bmatrix}
    3+3k& 0
\end{bmatrix}P\begin{bmatrix}
    0 \\ 1
\end{bmatrix}-\begin{bmatrix}
    3+3k& 0
\end{bmatrix}R\begin{bmatrix}
    0 \\ 1
\end{bmatrix}-k\\
\overset{\text{Lemma \ref{lem:basic-property-trace} (3)}}{=}&\begin{bmatrix}
    3+3k& 0
\end{bmatrix}P^2R\begin{bmatrix}
    0 \\ 1
\end{bmatrix}-k\begin{bmatrix}
    3+3k& 0
\end{bmatrix}P\begin{bmatrix}
    0 \\ 1
\end{bmatrix}-k\\
=&\begin{bmatrix}
    3+3k& 0
\end{bmatrix}(P^2R-PS-S)\begin{bmatrix}
    0 \\ 1
\end{bmatrix}-k\\
=&\begin{bmatrix}
    3+3k& 0
\end{bmatrix}(PQ-S)\begin{bmatrix}
    0 \\ 1
\end{bmatrix}-k.
\end{align*}
Finally, we prove $PQ-S \in SL(2,\mathbb Z)$. We set $PQ=\begin{bmatrix}
    x_{11} & x_{12}\\ x_{21} &x_{22} 
\end{bmatrix}$.
By the above discussion, we have $\mathrm{tr}(PQ-S)=(3+3k)x_{12}-k$, and thus $\mathrm{tr}(PQ)=(3+3k)x_{12}+k$. Therefore, we have
\begin{align*}
    \det(PQ-S)&=(x_{11}-k)(x_{22}-k)-x_{12}(x_{21}-3k^2-3k)\\
    &=\det(PQ)-k\cdot\mathrm{tr}(PQ)+k^2+(3k^2+3k)x_{12}\\
    &=\det(PQ)=1.
\end{align*}
This finishes the proof.
\end{proof}

As a corollary, for a $k$-generalized Markov triple $(a,b,c)$ with $b>\max\{a,c\}$, we obtain the existence of a $k$-generalized Cohn triple associated with $(a,b,c)$.  Before describing the statement, we will introduce the \emph{canonical graph isomorphism} between two trees.

\begin{definition}
Let $\mathbb T$ and $\mathbb T'$ be full binary trees. If a graph isomorphism $f\colon \mathbb T \to \mathbb T'$ preserves the left child and the right child, then $f$ is called \emph{the canonical graph isomorphism}.
\end{definition}

\begin{corollary}\label{cor:CT-MT}
Let $\ell\in \mathbb Z$. The correspondence between $(P,Q,R)$ in $\mathrm{WGC}\TT(k,\ell)$ and $(p_{12},q_{12},r_{12})$ induces the canonical graph isomorphism between $\mathrm{WGC}\TT(k,\ell)$ and $\mathrm{WM}\TT(k)$. In particular, for every $k$-generalized Markov triple with $b\geq \max\{a,c\}$, there is a $k$-generalized Cohn matrix associated with $(a,b,c)$.
\end{corollary}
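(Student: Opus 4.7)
The plan is to define the map $\Phi \colon \mathrm{WGC}\TT(k,\ell) \to \mathrm{WM}\TT(k)$ sending a vertex $(P,Q,R)$ to the triple $(p_{12},q_{12},r_{12})$ of $(1,2)$-entries, and verify that it is a canonical graph isomorphism by induction on the depth. Since both $\mathrm{WGC}\TT(k,\ell)$ and $\mathrm{WM}\TT(k)$ are full binary trees, it suffices to check that (a) $\Phi$ sends the root to the root, and (b) $\Phi$ commutes with the left-child and right-child operations; together these guarantee by induction that $\Phi$ is a depth-preserving bijection respecting left/right descendants.

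First, I would check (a) by inspecting the explicit formulas for $P_{1;\ell}$, $Q_{1;\ell}$, $R_{1;\ell}$ in the line above Proposition \ref{cohn-mat-with-111}: each of these matrices has $(1,2)$-entry equal to $1$, so $\Phi$ sends the root $(P_{1;\ell},Q_{1;\ell},R_{1;\ell})$ to the triple $(1,1,1)$, which is exactly the root of $\mathrm{WM}\TT(k)$. For (b), at any vertex $(P,Q,R)$ with $\Phi(P,Q,R) = (a,b,c)$, Theorem \ref{thm:cohn-markov} asserts that the left child $(P,PQ-S,Q)$ is a $k$-generalized Cohn triple associated with $\bigl(a,\tfrac{a^2+kab+b^2}{c},b\bigr)$, and the right child $(Q,QR-S,R)$ is associated with $\bigl(b,\tfrac{b^2+kbc+c^2}{a},c\bigr)$. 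These are precisely the left and right children of $(a,b,c)$ in $\mathrm{WM}\TT(k)$, so $\Phi$ intertwines the two child operations. Note also that Theorem \ref{thm:cohn-markov} already guarantees that $PQ-S$ and $QR-S$ are $k$-generalized Cohn matrices, so the child operation is well-defined at every vertex of $\mathrm{WGC}\TT(k,\ell)$.

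With these two properties in hand, a straightforward induction on the distance from the root in $\mathrm{WM}\TT(k)$ shows that $\Phi$ is surjective: the root $(1,1,1)$ is hit by the root of the Cohn tree, and if $(a,b,c) \in \mathrm{WM}\TT(k)$ has a preimage, then so do both of its children via the corresponding children in the Cohn tree. Injectivity is immediate since both trees are full binary trees on which $\Phi$ preserves depth, left-descendants, and right-descendants. For the "In particular" clause, any $k$-generalized Markov triple $(a,b,c)$ with $b \geq \max\{a,c\}$ appears in $\mathrm{WM}\TT(k)$ by Proposition \ref{prop:all-markov} (or is the root $(1,1,1)$), and pulling back through the isomorphism $\Phi$ produces an associated $k$-generalized Cohn triple.

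I do not expect a serious obstacle: the essential nontrivial content, namely that the child operations preserve the Cohn-triple conditions and correspond to Vieta jumpings on $(1,2)$-entries, has already been established in Theorem \ref{thm:cohn-markov}, and the root case is a mechanical verification carried out in Proposition \ref{cohn-mat-with-111}. The only point that might require care is ensuring that the inductive bijection argument is phrased correctly, since a priori both trees carry repetitions of triples (as in Proposition \ref{prop:all-markov}); however, the isomorphism is claimed at the level of the labelled tree structure rather than at the level of triples, so repetition on the Markov side is matched by the corresponding vertex in the Cohn tree and causes no difficulty.
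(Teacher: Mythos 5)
Your proposal is correct and follows exactly the route the paper intends: the corollary is stated without a separate written proof precisely because it is the combination of Proposition \ref{cohn-mat-with-111} (root case, all three matrices having $(1,2)$-entry $1$), Theorem \ref{thm:cohn-markov} (the child operations intertwine with the Vieta jumpings on $(1,2)$-entries), and Proposition \ref{prop:all-markov} (every such Markov triple appears in $\mathrm{WM}\mathbb{T}(k)$). No gaps; your induction on depth is the standard way to make the implicit argument explicit.
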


Moreover, a stronger result can be obtained.

\begin{proposition}\label{pr:all-cohn-triple}
Let $(P,Q,R)$ be a $k$-generalized Cohn triple associated with $(a,b,c)$. We assume that $b\geq \max\{a,c\}.$ Then, there exists $\ell\in \mathbb Z$ such that $(P,Q,R)$ is contained in $\mathrm{WGC}\mathbb T(k,\ell)$.    
\end{proposition}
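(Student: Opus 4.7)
The plan is to proceed by induction on the distance from the root of an occurrence of $(a,b,c)$ in $\mathrm{WM}\TT(k)$. The base case is $(a,b,c)=(1,1,1)$: here Proposition \ref{cohn-mat-with-111} provides an $\ell\in\mathbb Z$ with $(P,Q,R) = (P_{1;\ell}, Q_{1;\ell}, R_{1;\ell})$, which is the root of $\mathrm{WGC}\TT(k,\ell)$.

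For the inductive step, assume $(a,b,c)\neq (1,1,1)$, so that $b>\max\{a,c\}$, and fix an occurrence of $(a,b,c)$ in $\mathrm{WM}\TT(k)$. The right-child case is analogous, using the parent matrix $P':=(P+S)R^{-1}$; I treat the left-child case, where the parent in $\mathrm{WM}\TT(k)$ is $(a,c,c')$ with $c'=(a^2+kac+c^2)/b$. Guided by the shape of the left Cohn jump in \eqref{eq:k-gen-cohn-jumping}, I define
\[
R' := P^{-1}(R+S),
\]
so that $R = PR' - S$ holds by construction. If $R'$ is shown to be a $k$-generalized Cohn matrix with $(1,2)$-entry $c'$, then $(P,R,R')$ is a $k$-generalized Cohn triple associated with the parent; applying the induction hypothesis produces an $\ell$ with $(P,R,R')\in\mathrm{WGC}\TT(k,\ell)$, and the left Cohn jump yields $(P,PR-S,R)=(P,Q,R)$ inside the same tree.

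The main obstacle is therefore the verification that $R'$ is a $k$-generalized Cohn matrix. Integrality is immediate from $P^{-1}\in SL(2,\mathbb Z)$, and a direct expansion of $\det(R+S)$ using the Cohn-matrix identity $\mathrm{tr}(R) = (3+3k)c-k$ and $\det R=1$ shows that the $k$-dependent terms cancel to give $\det(R') = 1$. The $(1,2)$-entry of $R'$ reduces, via $b = q_{12} = p_{11}c + ar_{22}$ read off from $Q = PR-S$, to the expression $(3+3k)ac - b - k(a+c)$, which equals $c'$ by a rearrangement of $\mathrm{GME}(k)$. For the trace, one writes $\mathrm{tr}(R') = \mathrm{tr}(P^{-1}R) + \mathrm{tr}(P^{-1}S)$; the first summand equals $\mathrm{tr}(P)\mathrm{tr}(R) - \mathrm{tr}(PR)$ by Lemma \ref{lem:basic-property-trace}(2), while $\mathrm{tr}(PR) = \mathrm{tr}(Q+S) = (3+3k)b+k$ and a short matrix multiplication gives $\mathrm{tr}(P^{-1}S) = -k^2$. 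Combining these with the trace conditions for $P$ and $R$ yields $\mathrm{tr}(R') = (3+3k)c' - k$, as required. These trace manipulations (in the spirit of the proof of Theorem \ref{thm:cohn-markov}) form the technical core of the argument; the surrounding induction is then routine.
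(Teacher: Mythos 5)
Your proposal is correct and follows essentially the same route as the paper: the paper also descends to $(1,1,1)$ via the inverse transformations $(P,Q,R)\mapsto(P,R,P^{-1}(R+S))$ and $(P,Q,R)\mapsto((P+S)R^{-1},P,R)$ and then invokes Proposition \ref{cohn-mat-with-111}, phrasing the descent as repeated application rather than as induction on depth. The only difference is that the paper cites Lemma \ref{Cohn-Markov2} for the fact that these transformations produce Cohn triples associated with the parent Markov triple, whereas you re-derive that lemma's content inline (your trace and determinant computations match the paper's proof of that lemma).
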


To prove Proposition \ref{pr:all-cohn-triple}, we introduce the transformations from one vertex in $\mathrm{WGC}\mathbb T(k,\ell)$ except for the root to its parent. 
\begin{lemma}\label{Cohn-Markov2}
If $(P,Q,R)$ is a $k$-generalized Cohn triple associated with $(a,b,c)$, then $(P,R,P^{-1}(R+S))$ (resp. $((P+S)R^{-1},P,R)$) is a $k$-generalized Cohn triple associated with $\left(a,c,b'\right)$ (resp. $\left(b',a,c\right)$), where $b'=\dfrac{a^2+kac+c^2}{b}$.  
\end{lemma}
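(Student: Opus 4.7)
The strategy is direct verification of the three conditions of Definition \ref{def:gen-Cohn-triple} for each of the two proposed triples. Conceptually, the lemma asserts that the inverse of the left (resp.\ right) Vieta jumping of Theorem \ref{thm:cohn-markov} preserves the Cohn triple structure: applying $(\tilde P,\tilde Q,\tilde R)\mapsto(\tilde P,\tilde P\tilde Q-S,\tilde Q)$ to $(P,R,P^{-1}(R+S))$ returns $(P,Q,R)$, and applying the right Vieta jumping to $((P+S)R^{-1},P,R)$ likewise returns $(P,Q,R)$. So the content is really ``the inverse Vieta jumping is well defined on Cohn triples,'' and the proof proceeds by checking that the matrices produced are of the right form.

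I focus on the first case; the second is entirely parallel. Set $\tilde R:=P^{-1}(R+S)$. First, $\tilde R\in SL(2,\mathbb Z)$: it has integer entries because $P^{-1}$ and $R+S$ do, and a direct expansion of $\det(R+S)$, combined with $\det R=1$ and $\mathrm{tr}(R)=(3+3k)c-k$, simplifies to $1$. Second, the product condition $R=P\tilde R-S$ is immediate. Third, the $(1,2)$-entry computation: $\tilde R_{12}=p_{22}c-p_{12}(r_{22}+k)$, and using $q_{12}=(PR)_{12}=p_{11}c+ar_{22}$ (from $Q=PR-S$) to eliminate $r_{22}$, together with $\mathrm{tr}(P)=(3+3k)a-k$, reduces $\tilde R_{12}$ to $(3+3k)ac-k(a+c)-b$; multiplying by $b$ and applying $\mathrm{GME}(k)$ for $(a,b,c)$ yields $a^2+kac+c^2$, whence $\tilde R_{12}=b'$. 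Fourth, the trace: split $\mathrm{tr}(\tilde R)=\mathrm{tr}(P^{-1}R)+\mathrm{tr}(P^{-1}S)$, then use Lemma \ref{lem:basic-property-trace}(1)(2) to obtain $\mathrm{tr}(P^{-1}R)=\mathrm{tr}(P)\mathrm{tr}(R)-\mathrm{tr}(PR)=\mathrm{tr}(P)\mathrm{tr}(R)-\mathrm{tr}(Q)-2k$ (using $PR=Q+S$ and $\mathrm{tr}(S)=2k$), and compute $\mathrm{tr}(P^{-1}S)=-k^2$ exactly as $\mathrm{tr}(SP^{-1})=-k^2$ was handled in the proof of Theorem \ref{thm:cohn-markov}. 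Substituting the Cohn trace conditions and the expression for $b'$ from step three gives $\mathrm{tr}(\tilde R)=(3+3k)b'-k$. Finally, $(p_{12},r_{12},\tilde R_{12})=(a,c,b')$ is a $k$-generalized Markov triple because the inverse of the left Vieta jumping carries $(a,b,c)$ to $(a,c,b')$, so Definition \ref{def:gen-Cohn-triple} is fully verified.

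The second case $((P+S)R^{-1},P,R)$ runs on the same template: $\det(P+S)=1$ by an analogous expansion, the relation $P=(P+S)R^{-1}\cdot R-S$ is immediate, and the trace and $(1,2)$-entry of $(P+S)R^{-1}$ are computed using Lemma \ref{lem:basic-property-trace} together with $\mathrm{tr}(SR^{-1})=-k^2$ and the identity $q_{12}=p_{11}c+ar_{22}$ (this time solving for $p_{11}$).

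I expect the main obstacle to be step three (and its counterpart in the second case), since this is where the $k$-generalized Markov equation enters nontrivially: one must coordinate several matrix-entry identities — the Cohn trace condition on $P$, the product relation $Q=PR-S$, and $\mathrm{GME}(k)$ for $(a,b,c)$ — to produce the clean identity $b\,\tilde R_{12}=a^2+kac+c^2$. The trace computation in step four is longer but mechanical, mirroring the analogous calculation in the proof of Theorem \ref{thm:cohn-markov}.
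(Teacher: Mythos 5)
Your proposal is correct, and its skeleton matches the paper's: you verify the conditions of Definition \ref{def:gen-Cohn-triple} one by one, you establish the trace identity $\mathrm{tr}(P^{-1}(R+S))=\mathrm{tr}(P)\mathrm{tr}(R)-\mathrm{tr}(Q)-k^2-2k$ from $Q=PR-S$ and $\mathrm{tr}(P^{-1}S)=-k^2$ exactly as the paper does, and your determinant check $\det(R+S)=1$ is the paper's verbatim. Where you genuinely diverge is in identifying the $(1,2)$-entry. The paper never computes $m_{12}$ directly: it instead proves the structural identity $\mathrm{tr}(P^{-1}(R+S))=(3+3k)m_{12}-k$ by a long matrix manipulation built on Lemma \ref{lem:M3+3kM} and Lemma \ref{lem:basic-property-trace}(3) (the same machinery used in Theorem \ref{thm:cohn-markov}), and then $m_{12}=b'$ drops out by comparing this with the value $(3+3k)b'-k$ obtained from the trace Vieta jumping. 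You go the other way: compute $m_{12}=p_{22}c-a(r_{22}+k)$ by hand, eliminate $r_{22}$ via $q_{12}=p_{11}c+ar_{22}$ and the trace condition on $P$ to get $m_{12}=(3+3k)ac-k(a+c)-b$, and invoke $\mathrm{GME}(k)$ to see this equals $b'$; the Cohn trace condition on the new matrix then follows by substituting the three trace values into your trace identity. Both computations check out (I verified your entry manipulation). Your route is more elementary and makes the role of $\mathrm{GME}(k)$ explicit at the entry level; the paper's route avoids any entry-by-entry work and reuses the same lemmas that drive the forward direction in Theorem \ref{thm:cohn-markov}, which keeps the two proofs uniform. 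No gaps.
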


\begin{proof}
    We only prove the statement for $(P,R,P^{-1}(R+S))$. First, we prove that the transformation $(\mathrm{tr}(P),\mathrm{tr}(Q),\mathrm{tr}(R))\mapsto(\mathrm{tr}(P),\mathrm{tr}(R),\mathrm{tr}(P^{-1}(R+S)))$ is the inverse of the left Vieta jumping $(\alpha,\beta,\gamma)\mapsto(\alpha,\gamma,\alpha\gamma-\beta-k^2-2k)$ of $\mathrm{GSME}(k)$. Since $(P,Q,R)$ is a $k$-generalized Cohn matrix, it suffices to show
\[\mathrm{tr}(P^{-1}(R+S))=\mathrm{tr}(P)\mathrm{tr}(R)-\mathrm{tr}(Q)-k^2-2k.\]
Since $Q=PR-S$, we have
\begin{align*}
    \mathrm{tr}(P) \mathrm{tr}(R)- \mathrm{tr}(Q)-k^2-2k=\mathrm{tr}(P) \mathrm{tr}(R)- \mathrm{tr}(PR)-k^2.
\end{align*}
On the other hand, since $\mathrm{tr}(P^{-1}S)=-k^2$
(see the proof of Theorem \ref{thm:cohn-markov}), we have
\begin{align*}
\mathrm{tr}(P^{-1}(R+S))&=\mathrm{tr}(P^{-1}R)+\mathrm{tr}(P^{-1}S)=\mathrm{tr}(P) \mathrm{tr}(R)- \mathrm{tr}(PR)-k^2.
\end{align*}
Therefore, we have the desired equality.
Next, we prove that $\mathrm{tr}(P^{-1}(R+S))=(3+3k)m_{12}-k$, where $m_{12}$ is the $(1,2)$-entry of $P^{-1}(R+S)$.
By the above, we have
\begin{align*}
\mathrm{tr}(P^{-1}(R+S))=&\mathrm{tr}(P)\mathrm{tr}(R)-\mathrm{tr}(Q)-k^2-2k\\
=&\left(-\begin{bmatrix}
    3+3k& 0
\end{bmatrix}P^{-1}\begin{bmatrix}
    0 \\ 1
\end{bmatrix}-k\right)\left(\begin{bmatrix}
    3+3k& 0
\end{bmatrix}R\begin{bmatrix}
    0 \\ 1
\end{bmatrix}-k\right)\\
&-\left(\begin{bmatrix}
    3+3k& 0
\end{bmatrix}Q\begin{bmatrix}
    0 \\ 1
\end{bmatrix}-k\right)-k^2-2k\\
=&-\begin{bmatrix}
    3+3k& 0
\end{bmatrix}P^{-1}\begin{bmatrix}
    0&0\\3+3k& 0
\end{bmatrix}P^{-1}Q\begin{bmatrix}
    0 \\ 1
\end{bmatrix}\\
&-\begin{bmatrix}
    3+3k& 0
\end{bmatrix}P^{-1}\begin{bmatrix}
    0&0\\3+3k& 0
\end{bmatrix}P^{-1}S\begin{bmatrix}
    0 \\ 1
\end{bmatrix}\\&-k\begin{bmatrix}
    3+3k& 0
\end{bmatrix}P^{-1}Q\begin{bmatrix}
    0 \\ 1
\end{bmatrix}-k\begin{bmatrix}
    3+3k& 0
\end{bmatrix}P^{-1}S\begin{bmatrix}
    0 \\ 1
\end{bmatrix}\\
&+k\begin{bmatrix}
    3+3k& 0
\end{bmatrix}P^{-1}\begin{bmatrix}
    0 \\ 1
\end{bmatrix}-\begin{bmatrix}
    3+3k& 0
\end{bmatrix}Q\begin{bmatrix}
    0 \\ 1
\end{bmatrix}-k\\
\overset{\text{Lemma \ref{lem:M3+3kM}}}{=}&\begin{bmatrix}
    3+3k& 0
\end{bmatrix}\left((\mathrm{tr}P^{-1}+k)P^{-1}-\begin{bmatrix}
    0 & 0\\3+3k&0
\end{bmatrix}\right)Q\begin{bmatrix}
    0 \\ 1
\end{bmatrix}\\
&+\begin{bmatrix}
    3+3k& 0
\end{bmatrix}\left((\mathrm{tr}P^{-1}+k)P^{-1}-\begin{bmatrix}
    0 & 0\\3+3k&0
\end{bmatrix}\right)S\begin{bmatrix}
    0 \\ 1
\end{bmatrix}\\
&-k\begin{bmatrix}
    3+3k& 0
\end{bmatrix}P^{-1}Q\begin{bmatrix}
    0 \\ 1
\end{bmatrix}-k\begin{bmatrix}
    3+3k& 0
\end{bmatrix}P^{-1}S\begin{bmatrix}
    0 \\ 1
\end{bmatrix}\\
&+k\begin{bmatrix}
    3+3k& 0
\end{bmatrix}P^{-1}\begin{bmatrix}
    0 \\ 1
\end{bmatrix}-\begin{bmatrix}
    3+3k& 0
\end{bmatrix}Q\begin{bmatrix}
    0 \\ 1
\end{bmatrix}-k\\
\overset{\text{Lemma \ref{lem:basic-property-trace} (3)}}{=}&\begin{bmatrix}
    3+3k& 0
\end{bmatrix}(P^{-2}Q+P^{-2}S+P^{-1}S)\begin{bmatrix}
    0 \\ 1
\end{bmatrix}-k\\
=&\begin{bmatrix}
    3+3k& 0
\end{bmatrix}(P^{-1}(R+S))\begin{bmatrix}
    0 \\ 1
\end{bmatrix}-k.
\end{align*}
Note that in the second equality from the last, we use the fact that $S\begin{bmatrix}
    0\\1
\end{bmatrix}=\begin{bmatrix}
    0\\1
\end{bmatrix}$.
Finally, we prove $P^{-1}(R+S) \in SL(2,\mathbb Z)$. By setting $R=\begin{bmatrix}
    r_{11} & r_{12} \\ r_{21} &r_{22}
\end{bmatrix}$, we have
\begin{align*}
    \det(R+S)&=(r_{11}+k)(r_{22}+k)-r_{12}(r_{21}+3k^2+3k)\\
    &=\det(R)+k\cdot\mathrm{tr}(R)-k^2-(3k^2+3k)r_{12}\\
    &=\det(R)=1.
\end{align*}
This finishes the proof.
\end{proof}

\begin{proof}[Proof of Proposition \ref{pr:all-cohn-triple}]
We consider the transformation $(P, Q, R) \mapsto (P, R, P^{-1}(R+S))$ and $(P, Q, R) \mapsto ((P+S)R^{-1}, P, R)$. We assume that $(P, Q, R)$ is associated with $(a, b, c)$. If $a \leq c$ (resp., $a \geq c$), by using $(P, Q, R) \mapsto (P, R, P^{-1}(R+S))$ (resp., $(P+S)R^{-1}, P, R)$), we obtain another $k$-generalized Cohn triple $(P', Q', R')$ by Lemma \ref{Cohn-Markov2}. Let $a',b'$ and $c'$ be the $(1,2)$-entries of $P',Q'$ and $R'$, respectively. Then, we have $b' < b$. By repeating this operation, we obtain a $k$-generalized Cohn triple $(P'', Q'', R'')$ associated with $(1, 1, 1)$. According to Proposition \ref{cohn-mat-with-111}, there exists $\ell \in \mathbb{Z}$ such that $P'' = P_{1;\ell}$, $Q'' = Q_{1;\ell}$, and $R'' = R_{1;\ell}$. This implies that $(P, Q, R)$ is contained in $\mathrm{WGC}\mathbb T(k, \ell)$.
\end{proof}

\begin{remark}\label{rem:not-unique}
The integer $\ell$ in Proposition \ref{pr:all-cohn-triple} is not unique if $(a,b,c)\neq (1,1,1)$. Since there are two operations to get the $k$-generalized Cohn triple associated with $(1,1,1)$ from one associated with $(1,k+2,1)$, there are just two $\ell$'s. Therefore, $(P, Q, R)$ appears in just two wide $k$-generalized Cohn trees, one in the left half of one tree and the other in the right half of the other tree.
\end{remark}

The discussion thus far proves Theorem \ref{thm:cohn-intro}.

\begin{proof}[Proof of Theorem \ref{thm:cohn-intro}]
The statement follows from Theorem \ref{thm:cohn-markov} and Proposition \ref{pr:all-cohn-triple}.    
\end{proof}

Next, we prove the following theorem:

\begin{theorem}\label{thm:Cohn-distinct}
Let $k\in \mathbb Z_{\geq0}$ and $\ell\in \mathbb Z$. The second entries of $k$-generalized Cohn triples in $\mathrm{WGC}\mathbb T(k, \ell)$ are distinct.   
\end{theorem}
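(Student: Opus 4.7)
The plan is to argue by induction on the depth of vertices in $\mathrm{WGC}\mathbb{T}(k,\ell)$. First handle the root: its middle $Q_{1;\ell}$ has $(1,2)$-entry $1$, whereas every other vertex corresponds via the canonical graph isomorphism (Corollary \ref{cor:CT-MT}) to a Markov triple whose middle entry $b\geq k+2$. Indeed, applying a Vieta jump to $(1,1,1)$ produces $(1,k+2,1)$, and every subsequent Vieta jump strictly increases the middle entry along any root-to-descendant path (since $b>c$ implies $(a^2+kab+b^2)/c>b$). Thus the root's middle is distinguished from all other middles by $q_{12}$ alone.

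For the inductive step, assume all middles at depth strictly less than $n$ are pairwise distinct, and let $v_1,v_2$ be two vertices at depths at most $n$ with identical middle matrix $Q$. Writing the Cohn triples as $(P^{(i)},Q,R^{(i)})$, one has $P^{(1)}R^{(1)}=P^{(2)}R^{(2)}=Q+S$. By Remark \ref{rem:not-unique}, each Cohn triple appears at most once in $\mathrm{WGC}\mathbb{T}(k,\ell)$, so $v_1\neq v_2$ forces $(P^{(1)},R^{(1)})\neq (P^{(2)},R^{(2)})$; combined with equality of products, this forces both $P^{(1)}\neq P^{(2)}$ and $R^{(1)}\neq R^{(2)}$.

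Next, I would split into cases by the left/right type of $v_1,v_2$. By Lemma \ref{Cohn-Markov2}, a left child's parent has middle $R^{(i)}$ while a right child's parent has middle $P^{(i)}$, and parents lie at depth at most $n-1$. The strategy in each case is to produce a pair of vertices at depth strictly less than $n$ whose middles coincide, thus contradicting the inductive hypothesis. To do so I would invoke the matrix identities used in the proof of Theorem \ref{thm:cohn-markov}, in particular $\mathrm{tr}(SP^{-1})=-k^2$, together with entry-wise computations relating $Q$ to the outer $(1,2)$-entries $a^{(i)}=p^{(i)}_{12}$ and $c^{(i)}=r^{(i)}_{12}$ via the Markov equation.

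The main obstacle is the case in which $v_1,v_2$ are both left children (or both right children) but have distinct parents: a direct reduction fails because the parent middles $R^{(1)}$ and $R^{(2)}$ need not agree. To overcome this, more information must be squeezed out of the common $Q$. Specifically, one should show that the underlying Markov triples $(a^{(i)},b,c^{(i)})$ must agree between $i=1$ and $i=2$, after which the tree structure and the inductive hypothesis applied at the parent level force $P^{(1)}=P^{(2)}$ and $R^{(1)}=R^{(2)}$, yielding the desired contradiction.
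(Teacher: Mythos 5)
Your proposal is an induction scheme, not a proof: the decisive step is missing. You reduce to the situation of two vertices $v_1=(P^{(1)},Q,R^{(1)})$ and $v_2=(P^{(2)},Q,R^{(2)})$ sharing the same middle matrix, and you correctly observe that $P^{(1)}R^{(1)}=P^{(2)}R^{(2)}=Q+S$ with $P^{(1)}\neq P^{(2)}$, $R^{(1)}\neq R^{(2)}$. But a matrix $Q+S$ in $SL(2,\mathbb Z)$ admits many factorizations, so the common $Q$ by itself gives you no handle on the outer matrices, and you explicitly leave open the case where $v_1,v_2$ have the same left/right type but distinct parents, saying only that ``one should show that the underlying Markov triples agree.'' That claim is essentially as hard as the theorem, and even if you had it, it would not finish the argument: by Proposition \ref{prop:all-markov} each $k$-generalized Markov triple with $b>\max\{a,c\}$ labels \emph{two} distinct vertices of $\mathrm{WM}\mathbb T(k)$ (one in each half of the wide tree), so via Corollary \ref{cor:CT-MT} two vertices of $\mathrm{WGC}\mathbb T(k,\ell)$ associated with the same Markov triple need not coincide, and no contradiction with the inductive hypothesis follows. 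Distinguishing those two copies is precisely the content of the theorem, so the plan is circular at its core. (Your root case is fine, but note it only distinguishes middles by their $(1,2)$-entries; in general the $(1,2)$-entries of distinct middles can coincide — that is exactly why the statement is about matrices — so no argument at the level of Markov numbers alone can succeed.)

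The paper's proof avoids all of this with a single monotonicity statement. It introduces the index $I_M=m_{11}/m_{12}$ of a Cohn matrix and shows that every vertex $(P,Q,R)$ of the tree satisfies $I_P<I_Q<I_R$: from $P=(Q+S)R^{-1}$ one gets $0<p_{12}/(q_{12}r_{12})\leq I_R-I_Q$, and from $R=P^{-1}(Q+S)$ together with the trace condition $\mathrm{tr}(M)=(3+3k)m_{12}-k$ one gets $0<r_{12}/(p_{12}q_{12})\leq I_Q-I_P$. Since the children of $(P,Q,R)$ are $(P,PQ-S,Q)$ and $(Q,QR-S,R)$, the middles of the two subtrees have indices confined to the disjoint open intervals $(I_P,I_Q)$ and $(I_Q,I_R)$, neither containing $I_Q$, and distinctness of all middles follows by the standard Stern--Brocot nested-interval argument. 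If you want to salvage an inductive proof along your lines, you need an invariant of the middle matrix that separates the two halves below every vertex; the index is exactly such an invariant, and without something like it your case analysis cannot close.
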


To prove Theorem \ref{thm:Cohn-distinct}, we introduce the index of a $k$-generalized Cohn matrix:

\begin{definition}
For a $k$-generalized Cohn matrix $M=\begin{bmatrix}
    m_{11}&m_{12}\\m_{21}&m_{22}
\end{bmatrix}$, the \emph{index} of $M$ is the value $I_M:=\dfrac{m_{11}}{m_{12}}$. 
\end{definition}

\begin{proof}[Proof of Theorem \ref{thm:Cohn-distinct}]
For each $(P,Q,R)$, it is enough to show that  $I_P<I_Q<I_R$. First, we prove $I_Q<I_R$. Since $Q=PR-S$ holds, we have $P=(Q+S)R^{-1}$. By comparing the $(1,2)$-entries of this equality, we have
\[
    p_{12}=r_{11}q_{12}-(q_{11}+k)r_{12}\leq r_{11}q_{12}-q_{11}r_{12}.
\]
Therefore, we have
\[0< \dfrac{p_{12}}{q_{12}r_{12}}\leq \dfrac{r_{11}}{r_{12}}-\dfrac{q_{11}}{q_{12}}=I_R-I_Q,\]
and $I_Q<I_R$ holds as desired. Next, we prove $I_P<I_Q$. We obtain $R=P^{-1}(Q+S)$ from $Q=PR-S$. By comparing the $(1,2)$-entries of this equality, we have
\begin{align*}
     r_{12}&=p_{22}q_{12}-p_{12}(q_{22}+k)\\
     &=((3+3k)p_{12}-k-p_{11})q_{12}-p_{12}((3+3k)q_{12}-q_{11})\\
     &=-kq_{12}+q_{11}p_{12}-q_{12}p_{11}\leq q_{11}p_{12}-q_{12}p_{11}.
\end{align*}
Therefore, we have
\[0< \dfrac{r_{12}}{p_{12}q_{12}}\leq \dfrac{q_{11}}{q_{12}}-\dfrac{p_{11}}{p_{12}}=I_Q-I_P,\]
and $I_P<I_Q$ holds as desired.
\end{proof}

We fix $k\in \mathbb Z_{\geq 0}$. We denote by $\mathrm{GC}\mathbb T(k,\ell)$ the full subtree of $\mathrm{WGC}\mathbb T(k,\ell)$ whose root is the left child of $(P_{1;\ell},Q_{1;\ell}, R_{1;\ell})$, and we call this subtree the \emph{$k$-generalized Cohn tree}. By the results of $\mathrm{WGC}\mathbb{T}(k,\ell)$, we have the following corollaries.

\begin{corollary}\label{cor:CT-MT2}
Let $\ell\in \mathbb Z$. The correspondence between $(P,Q,R)$ in $\mathrm{GC}\TT(k,\ell)$ and $(p_{12},q_{12},r_{12})$ induces the canonical graph isomorphism between $\mathrm{GC}\TT(k,\ell)$ and $\mathrm{M}\TT(k)$. 
\end{corollary}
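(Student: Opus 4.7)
The plan is to deduce this corollary directly from the canonical graph isomorphism $\Phi\colon \mathrm{WGC}\TT(k,\ell) \to \mathrm{WM}\TT(k)$ established in Corollary \ref{cor:CT-MT}, by restricting $\Phi$ to the appropriate left subtree. Recall that $\Phi$ sends each vertex $(P,Q,R)$ of $\mathrm{WGC}\TT(k,\ell)$ to the triple $(p_{12},q_{12},r_{12})$ of $(1,2)$-entries; its bijectivity on vertices uses Proposition \ref{pr:all-cohn-triple} for surjectivity and Theorem \ref{thm:Cohn-distinct} for injectivity, while the compatibility with Vieta and Cohn jumpings (established in Theorem \ref{thm:cohn-markov}) guarantees that $\Phi$ preserves the left-child/right-child structure and hence is canonical.

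By the very definition of a canonical graph isomorphism, $\Phi$ maps the root $(P_{1;\ell}, Q_{1;\ell}, R_{1;\ell})$ of $\mathrm{WGC}\TT(k,\ell)$ to the root $(1,1,1)$ of $\mathrm{WM}\TT(k)$, and it carries the left child of $(P_{1;\ell}, Q_{1;\ell}, R_{1;\ell})$ to the left child of $(1,1,1)$. Since $\mathrm{GC}\TT(k,\ell)$ is by definition the full subtree of $\mathrm{WGC}\TT(k,\ell)$ rooted at the left child of $(P_{1;\ell}, Q_{1;\ell}, R_{1;\ell})$, and $\mathrm{M}\TT(k)$ is the full subtree of $\mathrm{WM}\TT(k)$ rooted at the left child of $(1,1,1)$, the restriction of $\Phi$ to $\mathrm{GC}\TT(k,\ell)$ is a bijection onto $\mathrm{M}\TT(k)$ which respects both the edge relation and the left/right designation of children. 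This restriction is therefore the required canonical graph isomorphism.

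There is no substantive obstacle: the corollary is essentially a formal consequence of restricting a canonical isomorphism of full binary trees to corresponding subtrees, once one observes that canonicality (preservation of left/right children) forces the left subtree of the root to be sent to the left subtree of the root. All the substantive content — existence, uniqueness, and compatibility with the jumping operations — has already been handled in Theorem \ref{thm:cohn-markov}, Proposition \ref{pr:all-cohn-triple}, Theorem \ref{thm:Cohn-distinct}, and Corollary \ref{cor:CT-MT}, so the proof amounts to invoking Corollary \ref{cor:CT-MT} and passing to the left subtree on both sides.
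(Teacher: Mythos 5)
Your proposal is correct and matches the paper's (implicit) argument: the paper states this corollary with no separate proof, asserting only that it follows from the results already established for $\mathrm{WGC}\mathbb{T}(k,\ell)$, which is exactly your restriction of the canonical isomorphism of Corollary \ref{cor:CT-MT} to the left subtrees on both sides. The only quibble is that the bijectivity of the correspondence in Corollary \ref{cor:CT-MT} is automatic from the two trees being generated by parallel branching rules (via Theorem \ref{thm:cohn-markov}), rather than requiring Proposition \ref{pr:all-cohn-triple} or Theorem \ref{thm:Cohn-distinct}, but this does not affect the validity of your argument.
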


\begin{corollary}\label{cor:Cohn-distinct}
Let $k\in \mathbb Z_{\geq0}$ and $\ell\in \mathbb Z$. The second entries of $k$-generalized Cohn triples in $\mathrm{GC}\mathbb T(k, \ell)$ are distinct.   
\end{corollary}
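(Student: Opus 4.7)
The plan is to deduce this corollary as an immediate consequence of Theorem \ref{thm:Cohn-distinct}. By the definition given just before the statement, $\mathrm{GC}\mathbb T(k,\ell)$ is the full subtree of $\mathrm{WGC}\mathbb T(k,\ell)$ rooted at the left child of $(P_{1;\ell},Q_{1;\ell},R_{1;\ell})$. In particular, every $k$-generalized Cohn triple $(P,Q,R)$ that occurs as a vertex of $\mathrm{GC}\mathbb T(k,\ell)$ also occurs as a vertex of the ambient tree $\mathrm{WGC}\mathbb T(k,\ell)$.

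Concretely, I would argue by contrapositive. Suppose two distinct triples $(P_1, Q_1, R_1)$ and $(P_2, Q_2, R_2)$ in $\mathrm{GC}\mathbb T(k,\ell)$ satisfied $Q_1 = Q_2$. Since both triples are then also vertices of $\mathrm{WGC}\mathbb T(k,\ell)$, this would contradict Theorem \ref{thm:Cohn-distinct}. Hence no such coincidence can occur, and the second entries of the $k$-generalized Cohn triples in $\mathrm{GC}\mathbb T(k,\ell)$ must all be distinct.

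There is no genuine obstacle here: the corollary is a pure inheritance statement, relying only on the fact that $\mathrm{GC}\mathbb T(k,\ell)$ is a sub-object of $\mathrm{WGC}\mathbb T(k,\ell)$ and not a newly constructed tree. If one wanted, one could alternatively write the proof by directly invoking the index inequality $I_P < I_Q < I_R$ used to establish Theorem \ref{thm:Cohn-distinct}, since that inequality controls the second-entry matrix through its index $I_M = m_{11}/m_{12}$ and the argument is insensitive to which subtree one restricts to; but the cleaner route is simply to cite Theorem \ref{thm:Cohn-distinct} and note the subtree inclusion.
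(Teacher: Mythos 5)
Your proposal is correct and matches the paper's intent exactly: the paper derives this corollary with no separate argument, simply noting that it follows from the results on $\mathrm{WGC}\mathbb{T}(k,\ell)$ (namely Theorem \ref{thm:Cohn-distinct}) because $\mathrm{GC}\mathbb{T}(k,\ell)$ is a full subtree of $\mathrm{WGC}\mathbb{T}(k,\ell)$. Your contrapositive phrasing is just a slightly more explicit write-up of the same inheritance argument.
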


Moreover, by Proposition \ref{pr:all-cohn-triple} and Remark \ref{rem:not-unique}, the following proposition holds.

\begin{proposition}\label{pr:all-cohn-triple2}
Let $(P,Q,R)$ be a $k$-generalized Cohn triple associated with $(a,b,c)$. We assume that $b> \max\{a,c\}.$ Then, there  exist a unique  $\ell\in \mathbb Z$ and a unique vertex $v$ in $\mathrm{GC}\mathbb T(k, \ell)$ such that $v=(P,Q,R)$.    
\end{proposition}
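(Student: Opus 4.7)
The plan is to bolt together Proposition \ref{pr:all-cohn-triple}, Remark \ref{rem:not-unique} and Corollary \ref{cor:Cohn-distinct}; essentially no new computation is required. Since $b>\max\{a,c\}$ excludes $(a,b,c)=(1,1,1)$, Remark \ref{rem:not-unique} is in force. Proposition \ref{pr:all-cohn-triple} supplies at least one $\ell$ with $(P,Q,R)\in \mathrm{WGC}\TT(k,\ell)$, and Remark \ref{rem:not-unique} refines this to exactly two such $\ell$: one, call it $\ell_L$, for which $(P,Q,R)$ occupies a vertex of the left half of $\mathrm{WGC}\TT(k,\ell_L)$, and another, $\ell_R\neq \ell_L$, for which it occupies a vertex of the right half of $\mathrm{WGC}\TT(k,\ell_R)$.

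Next I would use the construction of $\mathrm{GC}\TT(k,\ell)$: by definition it is the full subtree of $\mathrm{WGC}\TT(k,\ell)$ rooted at the left child of $(P_{1;\ell},Q_{1;\ell},R_{1;\ell})$, so $\mathrm{GC}\TT(k,\ell)$ is literally the ``left half'' of $\mathrm{WGC}\TT(k,\ell)$. Hence $(P,Q,R)$ lies in $\mathrm{GC}\TT(k,\ell)$ if and only if $\ell=\ell_L$, which simultaneously gives existence and uniqueness of $\ell$ in the statement.

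Uniqueness of the vertex $v$ inside $\mathrm{GC}\TT(k,\ell_L)$ is then immediate from Corollary \ref{cor:Cohn-distinct}: the middle matrices of distinct vertices of $\mathrm{GC}\TT(k,\ell_L)$ are pairwise distinct as elements of $SL(2,\ZZ)$, so no two vertices can carry the common triple $(P,Q,R)$, forcing $v$ to be unique.

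The only subtle input in this argument is the left/right dichotomy used at the first step, namely that the two occurrences of $(P,Q,R)$ in wide Cohn trees split as one in the left half of one tree and one in the right half of another tree. This is precisely the content of Remark \ref{rem:not-unique}, so no further obstacle arises here; the proof is essentially a short assembly of previously established facts, and I expect the writeup to take only a few lines.
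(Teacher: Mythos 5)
Your proposal is correct and matches the paper's own (very terse) justification, which simply derives the statement from Proposition \ref{pr:all-cohn-triple} and Remark \ref{rem:not-unique}; your identification of $\mathrm{GC}\mathbb{T}(k,\ell)$ with the left half of $\mathrm{WGC}\mathbb{T}(k,\ell)$ is exactly the intended reading of the remark's ``one in the left half of one tree and the other in the right half of the other tree.'' Your additional explicit appeal to Corollary \ref{cor:Cohn-distinct} for uniqueness of the vertex $v$ is a point the paper leaves implicit, but it is the right ingredient and does not constitute a different approach.
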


\begin{remark}
We denote by $\mathrm{GC}\mathbb T^\ast(k,\ell)$ the full subtree of $\mathrm{WGC}\mathbb T(k,\ell)$ whose root is the right child of $(P_{1;\ell},Q_{1;\ell}, R_{1;\ell})$. In fact, $\mathrm{GC}\mathbb T^\ast(k,\ell)=\mathrm{GC}\mathbb T(k,k+\ell+1)$ holds. This fact is proved in the following way. Let $(P,Q,R)$ be a $k$-generalized triple of the root in $\mathrm{GC}\mathbb T^\ast(k,\ell)$, and $p_{11}$ be the $(1,1)$-entry of $P$. We note that $(P,Q,R)$ is a $k$-generalized Cohn triple associated with $(1,k+2,1)$. By the generation rule of $\mathrm{WGC}\mathbb T(k,\ell)$, we have $p_{11}=k+\ell+1$. By Lemma \ref{Cohn-Markov2}, $(P,R,{P}^{-1}(R+S))$ is a $k$-generalized Cohn triple associated with $(1,1,1)$. By Proposition \ref{cohn-mat-with-111}, $(P,R,{P}^{-1}(R+S))$ is uniquely determined by $p_{11}$ and we have $(P,R,{P}^{-1}(R+S))=(P_{1;k+\ell+1},Q_{k+\ell+1},R_{k+\ell+1})$. Therefore, $(P,Q,R)$ is the left child of $(P_{1;k+\ell+1},Q_{k+\ell+1},R_{k+\ell+1})$, and it is the root of $\mathrm{GC}\mathbb T(k,k+\ell+1)$. Hence we have $\mathrm{GC}\mathbb T^\ast(k,\ell)=\mathrm{GC}\mathbb T(k,k+\ell+1)$.
\end{remark}

\subsection{Farey tree and fraction labeling}

In this subsection, we introduce the Farey tree, and we label $k$-generalized Cohn matrices in $\mathrm{GC}\mathbb T(k,\ell)$ with irreducible fractions. As a preparation, we extend the concept of the relatively prime.

\begin{definition}
Non-negative integers $a$ and $b$ with $(a,b)\neq (0,0)$ are said to be \emph{relatively prime} if there are no $a'$ and $b'\in \mathbb Z_{\geq 0}$ such that $ca'=a$ and $cb'=b$ for any $c\in \mathbb Z_{>1}$, 
\end{definition}

If $a$ and $b$ are both positive integers, then the above definition is the same as the usual sense. We consider the case $a=0$. If $b=1$, then $a$ and $b$ are relatively prime, and otherwise, $a$ and $b$ are not relatively prime. 

\begin{definition}\label{irreducible-fraction}
Let $q\in\QQ_{\geq 0}\cup\{\infty\}$ and $n$ and $d\in\ZZ_{\geq 0}$. The symbol $\dfrac{n}{d}$ is called the \emph{reduced expression} of $q$ if $n$ and $d$ are relatively prime and $q = \dfrac{n}{d}$, where $\dfrac{n}{d}$ is regarded as $\infty$ when $d = 0$ and $n > 0$.
Moreover, a fraction $\dfrac{n}{d}$ is said to be \emph{irreducible} if there exists $q\in\QQ_{\geq 0}\cup\{\infty\}$ such that $\dfrac{n}{d}$ is the reduced expression of $q$.
\end{definition}

\begin{definition}
For $\dfrac{a}{b}$ and $\dfrac{c}{d}$, we denote $ad-bc$ by $\det\left(\dfrac{a}{b},\dfrac{c}{d}\right)$. A triple $\left(\dfrac{a}{b},\dfrac{c}{d},\dfrac{e}{f}\right)$ is called a \emph{Farey triple} if the following conditions hold:
\begin{itemize}
    \item [(1)] $\dfrac{a}{b},\dfrac{c}{d}$ and $\dfrac{e}{f}$ are irreducible fractions, and
    \item [(2)] $\left|\det\middle(\dfrac{a}{b},\dfrac{c}{d}\middle)\middle|=\middle|\det\middle(\dfrac{c}{d},\dfrac{d}{e}\middle)\middle|=\middle|\det\middle(\dfrac{d}{e},\dfrac{a}{b}\middle)\right|=1$.
\end{itemize}
\end{definition}

We define the \emph{Farey tree} $\mathrm{F}\mathbb T$ as follows:
\begin{itemize}
\item [(1)] the root vertex is $\left(\dfrac{0}{1},\dfrac{1}{1},\dfrac{1}{0}\right)$, and
\item[(2)]every vertex $\left(\dfrac{a}{b},\dfrac{c}{d},\dfrac{e}{f}\right)$ has the following two children;
\[\begin{xy}(0,0)*+{\left(\dfrac{a}{b},\dfrac{c}{d},\dfrac{e}{f}\right)}="1",(-30,-15)*+{\left(\dfrac{a}{b},\dfrac{a+c}{b+d},\dfrac{c}{d}\right)}="2",(30,-15)*+{\left(\dfrac{c}{d},\dfrac{c+e}{d+f},\dfrac{e}{f}\right).}="3", \ar@{-}"1";"2"\ar@{-}"1";"3"
\end{xy}\]
\end{itemize}

\begin{proposition}[see \cite{aig}*{Section 3.2}]\label{prop:property-farey}\indent
\begin{itemize}
    \item [(1)]If $\left(\dfrac{a}{b},\dfrac{c}{d},\dfrac{e}{f}\right)$ is a Farey triple, then so are $\left(\dfrac{a}{b},\dfrac{a+c}{b+d},\dfrac{c}{d}\right)$ and $\left(\dfrac{c}{d},\dfrac{c+e}{d+f},\dfrac{e}{f}\right)$. In particular, each vertex in $\mathrm{F}\mathbb T$ is a Farey triple. 
    \item [(2)] For every irreducible fraction $\dfrac{a}{b} \in \mathbb Q_{> 0}$, there exists a unique Farey triple $F$ in $\mathrm{F}\mathbb T$ such that $\dfrac{a}{b}$ is the second entry of $F$.
    \item [(3)] For $\left(\dfrac{a}{b},\dfrac{c}{d},\dfrac{e}{f}\right)$ in $\mathrm{F}\mathbb T$, $\dfrac{a}{b}<\dfrac{c}{d}<\dfrac{e}{f}$ holds. 
\end{itemize}
\end{proposition}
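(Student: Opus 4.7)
The plan is to treat the three parts in turn, with part~(2) carrying the real content. For part~(1), I would verify the three defining determinant conditions by direct computation on the children. Given a Farey triple $(a/b, c/d, e/f)$, for the left child $(a/b, (a+c)/(b+d), c/d)$ one has
\[|a(b+d)-b(a+c)| = |ad-bc| = 1, \qquad |(a+c)d-(b+d)c| = |ad-bc| = 1,\]
while the determinant of the unchanged outer pair $(a/b, c/d)$ is inherited from the parent. Irreducibility of the mediant $(a+c)/(b+d)$ follows at once, since any common divisor of $a+c$ and $b+d$ must divide $(a+c)d-(b+d)c = ad-bc$ and hence equals $1$. The right child is symmetric, and since the root $(0/1, 1/1, 1/0)$ is manifestly a Farey triple, induction on depth in $\mathrm{F}\mathbb T$ finishes the ``in particular'' clause.

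Part~(3) is the classical mediant inequality: for $a/b < c/d$ with positive denominators,
\[\frac{a}{b} < \frac{a+c}{b+d} \iff a(b+d) < b(a+c) \iff ad < bc \iff \frac{a}{b} < \frac{c}{d},\]
and similarly $(a+c)/(b+d) < c/d$. Combined with $0/1 < 1/1 < 1/0$ at the root (reading $1/0 = \infty$) and induction on depth, every node satisfies $a/b < c/d < e/f$ strictly.

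For part~(2) I would prove existence and uniqueness simultaneously by a descent procedure. Given an irreducible $a/b \in \QQ_{>0}$, start at the root and repeatedly compare $a/b$ with the middle entry $r/s$ of the current node; if $a/b = r/s$ we stop, and otherwise by part~(3) exactly one of the two open intervals cut out by the children strictly contains $a/b$, and we recurse into the corresponding child. The key input for termination is the classical Farey-neighbor estimate: whenever $p/q < t/u$ satisfy $|qt-pu|=1$, every rational in the open interval $(p/q, t/u)$ has denominator at least $q+u$, with equality only for the mediant $(p+t)/(q+u)$. Each further descent step replaces the outer pair by one with strictly larger denominator sum, so the lower bound on $b$ strictly increases; hence after finitely many steps $a/b$ must coincide with the current middle entry. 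Uniqueness is then automatic: by part~(3) the two subtrees of any node have middle entries in disjoint open intervals, so the sequence of comparisons $a/b \lessgtr r/s$, and hence the node locating $a/b$, is uniquely forced. The step I expect to be the main obstacle is the denominator lower bound powering termination; it is classical but uses the inductive observation that at every node the outer entries form a Farey-neighbor pair, which is exactly the $|\det|=1$ condition already established in part~(1).
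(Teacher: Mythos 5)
The paper offers no proof of this proposition at all --- it is quoted from \cite{aig}*{Section 3.2} --- so there is nothing in-paper to compare against; your argument is the standard Stern--Brocot/Farey-mediant proof that the cited source uses. Parts (1) and (3) are correct as written: the determinant computations, the irreducibility of the mediant via any common divisor of $a+c$ and $b+d$ dividing $ad-bc=\pm1$, the mediant inequality, and the induction from the root all go through.

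There is, however, one concrete flaw in the termination argument for part (2). At every vertex the middle entry is the mediant of the outer two, so if the outer denominators are $b$ and $f$, the left child's outer denominator sum is $2b+f$ and the right child's is $b+2f$; the claimed strict increase therefore fails exactly when you descend right and $f=0$. This happens along the entire rightmost spine of $\mathrm{F}\mathbb T$, whose vertices are $\left(\frac{n}{1},\frac{n+1}{1},\frac{1}{0}\right)$: the outer denominator sum is stuck at $1$, and the neighbor estimate ``denominator at least $q+u$'' is vacuous there. So for a target $\frac{a}{b}>1$ your stated reason for termination does not apply to the initial run of right-descents; termination in that phase holds for a different reason (you descend right only while $\frac{a}{b}>n$, which can happen at most $\lfloor a/b\rfloor$ times), after which a left-descent makes both outer denominators positive and your denominator argument takes over. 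This is a one-sentence patch, and it is immaterial for the fractions in $[0,1]$ that the paper actually uses, but as a proof of the statement for all of $\mathbb Q_{>0}$ the step as written would fail.
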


By using the canonical graph isomorphism from the Farey tree to the $k$-generalized Cohn tree, we provide the correspondence from Farey triple in $\mathrm{F}\mathbb T$ to $k$-generalized Cohn triple in $\mathrm{GC}\mathbb T(k,\ell)$. This correspondence induces the map from irreducible fractions in $(0,\infty)$ to $k$-generalized Cohn matrices which are the second components of $k$-generalized Cohn triples in $\mathrm{GC}\mathbb T(k,\ell)$. This map is called the \emph{fraction labeling to $k$-generalized Cohn matrices}. We denote by $C_q(k,\ell)$ the $k$-generalized Cohn matrix labeled with a fraction $q$. By Theorem \ref{thm:Cohn-distinct}, its proof and Proposition \ref{prop:property-farey} (3), we have the following corollary:

\begin{corollary}
\label{fraction-labeling-inj}
The fraction labeling to $k$-generalized Cohn matrices is injective. Moreover, if $p<q$ for $p,q\in \QQ_{\geq 0}\cup\{\infty\}$, then $I_{C_p(k,\ell)}<I_{C_q(k,\ell)}$ holds.
\end{corollary}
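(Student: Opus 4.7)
The plan is to deduce both statements from the combination of Corollary \ref{cor:Cohn-distinct}, Proposition \ref{prop:property-farey}, and the inequality $I_P<I_Q<I_R$ established inside the proof of Theorem \ref{thm:Cohn-distinct}. For injectivity, if $p\neq q$ are distinct irreducible fractions in $\QQ_{>0}$, then Proposition \ref{prop:property-farey}(2) realises them as middle entries of two distinct Farey triples in $\mathrm{F}\TT$, and the canonical graph isomorphism $\mathrm{F}\TT\to\mathrm{GC}\TT(k,\ell)$ carries these to two distinct Cohn triples of $\mathrm{GC}\TT(k,\ell)$. Their middle matrices $C_p(k,\ell)$ and $C_q(k,\ell)$ must then be distinct by Corollary \ref{cor:Cohn-distinct}.

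For the ordering statement I first prove, by a single induction on descendant depth, the following two parallel interval-propagation claims. Claim (A): for any vertex $v\in\mathrm{F}\TT$ with Farey triple $\left(\dfrac{a}{b},\dfrac{c}{d},\dfrac{e}{f}\right)$, every strict descendant of $v$ in the left subtree has its Farey middle in the open interval $\left(\dfrac{a}{b},\dfrac{c}{d}\right)$, and every strict descendant in the right subtree has its Farey middle in $\left(\dfrac{c}{d},\dfrac{e}{f}\right)$. Claim (B): for the corresponding vertex of $\mathrm{GC}\TT(k,\ell)$ with Cohn triple $(P,Q,R)$, every strict descendant in the left subtree has its middle-matrix index in $(I_P,I_Q)$, while every strict descendant in the right subtree has middle index in $(I_Q,I_R)$. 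Claim (A) is immediate from the mediant rule together with Proposition \ref{prop:property-farey}(3): the left child of $v$ is $\left(\dfrac{a}{b},\dfrac{a+c}{b+d},\dfrac{c}{d}\right)$, whose middle is strictly between $\dfrac{a}{b}$ and $\dfrac{c}{d}$, and the induction then propagates the containment downward. Claim (B) follows from an identical recursion: the left child of $(P,Q,R)$ in $\mathrm{GC}\TT(k,\ell)$ is $(P,PQ-S,Q)$, and applying the index inequality from the proof of Theorem \ref{thm:Cohn-distinct} to this triple gives $I_P<I_{PQ-S}<I_Q$, after which the induction step is identical to that for (A).

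To conclude, take distinct labels $p<q$ and, via the canonical isomorphism, identify $v_p,v_q\in\mathrm{GC}\TT(k,\ell)$ with the corresponding Farey vertices. Let $w$ be their lowest common ancestor in the tree, with Farey middle $\dfrac{c}{d}$ and Cohn middle $Q_w$. If $w=v_p$, then $v_q$ is a strict descendant of $w$; claim (A) rules out the left subtree of $w$ (whose middles are all strictly below $\dfrac{c}{d}=p<q$), so $v_q$ sits in the right subtree of $w$, and claim (B) then gives $I_{C_q(k,\ell)}>I_{Q_w}=I_{C_p(k,\ell)}$. The case $w=v_q$ is handled symmetrically, and if $w$ is a strict ancestor of both $v_p$ and $v_q$, then (A) forces $v_p$ into the left subtree of $w$ and $v_q$ into the right, whence (B) yields $I_{C_p(k,\ell)}<I_{Q_w}<I_{C_q(k,\ell)}$. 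The main obstacle is essentially just the bookkeeping of the interval invariants (A) and (B) through a common binary-tree induction; since they are driven respectively by Proposition \ref{prop:property-farey} and the proof of Theorem \ref{thm:Cohn-distinct} along the same recursion, once the framework is set up the argument is mechanical.
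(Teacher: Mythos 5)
Your proposal is correct and follows exactly the route the paper intends: the paper derives this corollary from Theorem \ref{thm:Cohn-distinct}, the inequality $I_P<I_Q<I_R$ established in its proof, and Proposition \ref{prop:property-farey} (3), which are precisely your ingredients. You have merely written out the routine interval-propagation induction on the binary tree that the paper leaves implicit, so no further comment is needed.
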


\section{Proof of main theorems}
In this section, we prove the criterion theorem (Theorem \ref{thm:criterion-intro}) and find $k$-generalized Markov numbers to which this theorem can be applied.

Let $\mathrm{LGC}\mathbb T(k,\ell)$ be the full subtree of $\mathrm{GC}\mathbb T(k,\ell)$ whose root is the left child of the root of  $\mathrm{GC}\mathbb T(k,\ell)$. In this section, we consider the case $\ell=-k$. In this case, the root $\left(C_{\frac{0}{1}}(k,-k),C_{\frac{1}{2}}(k,-k),C_{\frac{1}{1}}(k,-k)\right)$ of $\mathrm{LGC}\mathbb T(k,-k)$ is given by
\begin{align*}
  C_{\frac{0}{1}}(k,-k)&=\begin{bmatrix}
    -k &1\\-3k^2-3k-1&3k+3
\end{bmatrix},\\
   C_{\frac{1}{2}}(k,-k)&=\begin{bmatrix}
      k+2  &2k^2 + 6k + 5\\3k^2+9k+5 &6k^3+24k^2+31k+ 13
\end{bmatrix}, \text{ and}\\
 C_{\frac{1}{1}}(k,-k)&=\begin{bmatrix}
   1 &k+2\\3k + 2&3k^2+8k+ 5
\end{bmatrix}.
\end{align*}

All $k$-generalized Cohn matrices labeled with irreducible fractions between $0$ and $1$ are included in $\mathrm{LGC}\mathbb T(k, \ell)$. Moreover, if we restrict $\mathrm{M}\mathbb T(k)$ to the same region (we denote by $\mathrm{LM}\mathbb T(k)$ this tree), then all positive integer solutions to $\mathrm{GME}(k)$ but $(1,1,1)$ and $(1,k+2,1)$ appear exactly once without overlap (here, triples that differ only in order are regarded as the same solution). Therefore, the correspondence between the Farey triple $(r, t, s)$ and the $(1,2)$-entries of the $k$-generalized Cohn triple $(C_r(k,\ell), C_t(k,\ell), C_s(k,\ell))$ induces a bijection from the set of Farey triples in $[0,1]^3$ to the set of all $k$-generalized Markov triples but $(1,1,1)$ and $(1,k+2,1)$. If we take the second entries of $(r,t,s)$, then this bijection gives each $k$-generalized Markov number except for $1$ and $k+2$ a fraction labeling. We call it the \emph{fraction labeling to $k$-generalized Markov numbers}, and for every irreducible fraction $t\in (0,1)$, we denote by $m_t$ the corresponding $k$-generalized Markov number. Also, we set $m_\frac{0}{1}=1$ and $m_{\frac{1}{1}}=k+2$.

We will use the following proposition to show Theorem \ref{thm:criterion-intro}.

\begin{proposition}\label{prop:equivalent-condition-each}
 Let $k\in \mathbb Z_{\geq 0}$. For a $k$-generalized number $b$, there is a unique $k$-generalized Markov triple $(a,b,c)$ up to order such that $\max\{a,b,c\}=b$ if and only if there is a unique irreducible fraction $t\in [0,1]$ such that $b=m_t$. 
\end{proposition}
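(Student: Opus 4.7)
The plan is to prove the stronger equality $|\mathcal{T}_b| = |\mathcal{F}_b|$, where $\mathcal{T}_b$ denotes the set of $k$-generalized Markov triples (up to order) with maximum equal to $b$, and $\mathcal{F}_b$ denotes the set of irreducible fractions $t \in [0,1]$ with $m_t = b$. The iff statement of the proposition then follows immediately. The main tool is the bijection set up in the paragraph preceding the proposition, between Farey triples in $[0,1]^3$ and unordered $k$-generalized Markov triples other than $(1,1,1)$ and $(1,k+2,1)$; combined with Proposition \ref{prop:property-farey}(2), which assigns each irreducible fraction $t \in (0,1)$ a unique Farey triple having $t$ as its middle entry, this restates as a bijection between irreducible fractions $t \in (0,1)$ and unordered $k$-generalized Markov triples $(a,b,c) \neq (1,1,1), (1,k+2,1)$ with $b > \max\{a,c\}$, under which $t$ corresponds to the maximum $m_t = b$.

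First I would invoke Proposition \ref{relatively-prime}: whenever $b \neq 1$, the condition $\max\{a,b,c\} = b$ forces $b > \max\{a,c\}$, since $a = b$ or $c = b$ would contradict the coprimality of that pair. Therefore, when $b \notin \{1, k+2\}$, every triple in $\mathcal{T}_b$ fits into the bijection, which identifies $\mathcal{T}_b$ with $\{t \in (0,1) : m_t = b\}$. Since $m_{0/1} = 1$ and $m_{1/1} = k+2$ are both different from $b$, this last set equals $\mathcal{F}_b$, giving $|\mathcal{T}_b| = |\mathcal{F}_b|$.

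For the two boundary cases I would argue directly. When $b = 1$, coprimality gives $\mathcal{T}_1 = \{(1,1,1)\}$, while $m_{1/1} = k+2 \geq 2$, and every $m_t$ with $t \in (0,1)$ strictly dominates the other (positive integer) entries of its associated triple and is therefore at least $2$, so $\mathcal{F}_1 = \{0/1\}$; hence both sides have size one. When $b = k+2$, the triple $(1,k+2,1)$ lies in $\mathcal{T}_{k+2}$, while every further triple with maximum $k+2$ has $k+2 > \max\{a,c\}$ and is not $(1,1,1)$, hence corresponds under the bijection to some $t \in (0,1)$ with $m_t = k+2$; likewise $1/1 \in \mathcal{F}_{k+2}$ while $0/1 \notin \mathcal{F}_{k+2}$, so the remaining elements of $\mathcal{F}_{k+2}$ lie in $(0,1)$ and pair off with the corresponding triples. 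This yields $|\mathcal{T}_{k+2}| = 1 + |\{t \in (0,1) : m_t = k+2\}| = |\mathcal{F}_{k+2}|$. The argument is ultimately just bookkeeping layered on top of the pre-established bijection; the only real care needed is in matching up the two Markov triples excluded from the bijection, $(1,1,1)$ and $(1,k+2,1)$, with the two endpoint fractions $0/1$ and $1/1$ sitting outside the interval $(0,1)$, and this is precisely what dictates the three-case split.
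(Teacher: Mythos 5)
Your proof is correct and rests on the same ingredients as the paper's own argument: the bijection between Farey triples in $[0,1]^3$ and the $k$-generalized Markov triples other than $(1,1,1)$ and $(1,k+2,1)$, Proposition \ref{prop:property-farey}(2) to attach a unique Farey triple to each middle fraction, and coprimality (Proposition \ref{relatively-prime}) to force the maximum to be strict, with the two endpoint fractions handled separately. Recasting the equivalence as the cardinality identity $|\mathcal{T}_b|=|\mathcal{F}_b|$ is a slightly tidier bookkeeping of what the paper proves as two separate implications, not a genuinely different route.
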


\begin{proof}
For $b=1$ and $b=k+2$, there exist a unique $k$-generalized Markov triple up to order such that $\max\{a,b,c\}=b$, and a unique irreducible fraction $t\in[0,1]$ such that $b=m_t$. Let $b\neq 1$ or $k+2$.  We assume that there is a unique irreducible fraction $t\in [0,1]$ such that $b=m_t$. If $m_t$ is the maximal number in two $k$-generalized Markov triples $(m_r,m_t,m_s)$ and $(m_{r'},m_t,m_{s'})$ in $\mathrm{LM}\mathbb T(k)$, then the corresponding Farey triples are $(r,t,s)$ and $(r',t,s')$. By Proposition \ref{prop:property-farey}, we have $r=r'$ and $s=s'$, and thus $m_r=m_{r'}$ and $m_s=m_{s'}$. Conversely, we assume that there is a unique $k$-generalized Markov triple $(a,b,c)$ up to order such that $\max\{a,b,c\}=b$. If $b=m_t=m_{t'}$, then by the assumption, there exist unique fractions $r$ and $s$ such that $(m_r,b,m_s)$ is a $k$-generalized Markov triples in $\mathrm{LM}\mathbb T(k)$. Since $(r,t,s)$ and $(r,t',s)$ are both Farey triple, by the definition of Farey tree, we have $t=t'$.
\end{proof}

This proposition yields the following:

\begin{corollary}\label{prop:equivalent-condition}
Conjecture \ref{conj:intro} is true if and only if the fraction labeling to the $k$-generalized Markov numbers $t\mapsto m_t$ is an injective map. 
\end{corollary}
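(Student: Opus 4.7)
The plan is to apply Proposition \ref{prop:equivalent-condition-each} pointwise. Conjecture \ref{conj:intro} is, by its statement, a universal quantification over $k$-generalized Markov numbers $b$ of the uniqueness condition ``there is a unique $k$-generalized Markov triple $(a,b,c)$ up to order with $\max\{a,b,c\}=b$''. By Proposition \ref{prop:equivalent-condition-each}, this condition is equivalent at each individual $b$ to ``there is a unique irreducible fraction $t\in[0,1]$ with $b=m_t$''. So Conjecture \ref{conj:intro} is equivalent to the latter holding for every $k$-generalized Markov number $b$.

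The next step is to observe that the fraction labeling $t\mapsto m_t$ is surjective onto the set of $k$-generalized Markov numbers: we have $m_{0/1}=1$ and $m_{1/1}=k+2$ by definition, and for any other $k$-generalized Markov number $b$, consider the first triple in a Vieta jumping path from $(1,1,1)$ in which $b$ appears; since new entries produced by Vieta jumping in $\mathrm{WM}\TT(k)$ are strictly larger than all previous ones, $b$ is the strict maximum of this triple, so via the bijection between Farey triples in $[0,1]^3$ and non-trivial $k$-generalized Markov triples, $b=m_t$ for some irreducible $t\in(0,1)$. Given surjectivity, the condition ``for every $k$-generalized Markov number $b$ there is a unique irreducible $t\in[0,1]$ with $b=m_t$'' is precisely the injectivity of $t\mapsto m_t$.

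Combining the two equivalences yields the corollary in both directions: Conjecture \ref{conj:intro} holds iff each $k$-generalized Markov number has a unique preimage under the labeling iff $t\mapsto m_t$ is injective. The corollary is essentially a restatement and carries no genuine obstacle; the real combinatorial content---translating maximality of $b$ in a triple into the Farey-triple position of $t$---was already discharged in the proof of Proposition \ref{prop:equivalent-condition-each} using Proposition \ref{prop:property-farey}.
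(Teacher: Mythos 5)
Your proof is correct and follows the same route as the paper, which states the corollary as an immediate consequence of Proposition \ref{prop:equivalent-condition-each} without further argument. The only detail you add beyond the paper's implicit reasoning is the (routine but worth noting) surjectivity of $t\mapsto m_t$ onto the set of $k$-generalized Markov numbers, which converts the pointwise uniqueness statement into injectivity of the labeling.
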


\subsection{Proof of Theorem \ref{thm:criterion-intro}}
%\begin{theorem}\label{thm:criterion-intro}
%For a $k$-generalized Markov number $b\neq 1$ or $k+2$, if the number of solutions to $x^2+ kx+1\equiv 0 \mod b$ and  $x^2-kx+1\equiv 0 \mod b$ is at most two, then Conjecture \ref{conj:intro} is true for $b$.
%\end{theorem}
We fix $k\in \mathbb Z_{\geq 0}$ and a $k$-generalized Markov triple $(m_r,m_t,m_s)$ in $\mathrm{LM}\mathbb T(k)$. Note that $m_t>\max\{m_r,m_s\}$ and $m_r\neq m_s$. We consider solutions $x$ to equations
\begin{align}\label{eq:msx}
    m_rx\equiv \pm m_s \mod m_t
\end{align}
(it means that $m_rx$ is congruent to $m_s$ or $-m_s$). Since $m_r$ and $m_t$ are relatively prime from Proposition \ref{relatively-prime}, there is a unique solution to \eqref{eq:msx} in the range $\left(0,\dfrac{m_t}{2}\right)$.

\begin{definition}
 The \emph{characteristic number} $u_t$ of $(m_r,m_t,m_s)$ is a unique solution to \eqref{eq:msx} in the range $\left(0,\dfrac{m_t}{2}\right)$.
\end{definition}

\begin{lemma}\label{lem:1,1-positivity}
For an irreducible fraction $t\in (0,1)$, the $(1,1)$-entry of $C_{t}(k,-k)$ is positive.
\end{lemma}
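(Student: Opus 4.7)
The plan is to establish the stronger statement that all four entries of $C_t(k,-k)$ are positive integers for every irreducible $t \in (0,1]$; this immediately implies the lemma. I would work by induction on the denominator of $t$, equivalently on the depth in the Farey tree inside $\mathrm{LGC}\mathbb{T}(k,-k)$. Recall from Proposition \ref{prop:property-farey} that every irreducible $t \in (0,1)$ with $t \neq 1/2$ is the middle of a unique Farey triple $(r,t,s)$ with $r,s$ of strictly smaller denominator, and that the $k$-generalized Cohn identity $C_t = C_r C_s - S$ then expresses the entries of $C_t$ directly in terms of those of $C_r$ and $C_s$.

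The base cases $t = 1/1$ and $t = 1/2$ are verified directly from the explicit formulas for $C_{1/1}(k,-k)$ and $C_{1/2}(k,-k)$ recorded at the start of this section. For the inductive step I would split on whether $r = 0/1$. When $r \neq 0/1$ (the main case), both $r$ and $s$ lie in $(0,1]$, so the inductive hypothesis provides the positivity of every entry of $C_r$ and $C_s$; combining this with the bound $m_r \geq m_{1/1} = k + 2$ valid for all $r \in (0,1]$ and the auxiliary estimate $(C_r)_{22} \geq (C_{1/1})_{22} = 3k^2 + 8k + 5$ (which I would carry along in the induction), an entry-by-entry expansion of $C_t = C_r C_s - S$ yields inequalities such as
\[
  (C_t)_{11} \geq 1 + (k+2) - k = 3 \qquad\text{and}\qquad (C_t)_{21} \geq 1 + (3k^2 + 8k + 5) - (3k^2 + 3k) = 5k + 6,
\]
together with analogous lower bounds for $(C_t)_{12}$ and $(C_t)_{22}$ and the preservation of the auxiliary bound (since $(C_r)_{22}(C_s)_{22}$ trivially dominates the subtraction of $k$).

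When $r = 0/1$, the Farey-neighbor condition $|bc-ad|=1$ with $r=0/1$ forces $s = 1/n$ and $t = 1/(n+1)$ for some $n \geq 1$, and I would handle this via a secondary induction on $n$ using the recursion $C_{1/(n+1)} = C_{0/1}\cdot C_{1/n} - S$. A short calculation isolates the $(1,1)$-entry into a one-variable linear recurrence $u_{n+1} = (2k+3)u_n - u_{n-1} - k$ with $u_1 = 1$ and $u_2 = k+2$, where $u_n := (C_{1/n})_{11}$; from this an easy induction shows $u_n > 0$ and even $u_{n+1} - u_n \geq k+1$. The remaining entries of $C_{1/n}$ are handled by parallel linear recursions, and the auxiliary bound $(C_{1/n})_{22} \geq 3k^2 + 8k + 5$ follows from the identity $(C_{1/n})_{22} = (3+3k)m_{1/n} - k - u_n$ together with the monotonicity of the sequence $m_{1/n}$.

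The main obstacle I expect is the interaction between the two cases: since $C_{0/1}(k,-k)$ has negative $(1,1)$- and $(2,1)$-entries as soon as $k \geq 1$, the clean ``all entries positive'' hypothesis simply cannot be propagated through the leftmost edge of the tree, which is exactly why the separate secondary induction in Case B is needed. Once that is available, Case A is relatively routine; the only delicate point is the $-(3k^2 + 3k)$ subtraction appearing in $(C_t)_{21}$, which is precisely what the auxiliary lower bound on $(C_r)_{22}$ is designed to absorb.
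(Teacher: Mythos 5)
Your proposal is correct, but it takes a genuinely different route from the paper. The paper's proof first invokes the monotone increase of the index $I_{C_t}=(C_t)_{11}/(C_t)_{12}$ in $t$ (Corollary \ref{fraction-labeling-inj}, already available from the proof of Theorem \ref{thm:Cohn-distinct}) to reduce the whole lemma to the leftmost branch $t=\frac{1}{n}$: since some $\frac{1}{n}<t$ and $(C_t)_{12}=m_t>0$, positivity of $(C_{1/n})_{11}$ forces positivity of $(C_t)_{11}$. It then runs a single induction along that branch via $C_{1/(i+1)}=C_{0/1}C_{1/i}-S$, showing the $(1,1)$-entries increase. Your secondary induction in Case B is exactly this computation: eliminating the $(2,1)$-entry from the coupled recursion $a'=-ka+c-k$, $c'=-(3k^2+3k+1)a+(3k+3)c-(3k^2+3k)$ yields precisely your $u_{n+1}=(2k+3)u_n-u_{n-1}-k$, and your bound $u_{n+1}-u_n\geq k+1$ is the paper's $a''-a'\geq(2k+1)a-k\geq k+1$. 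What you do differently is replace the index-monotonicity reduction by a direct induction over the entire Farey tree with the strengthened hypothesis that all four entries are positive, carrying the auxiliary bound $(C_r)_{22}\geq 3k^2+8k+5$ to absorb the $-(3k^2+3k)$ in the $(2,1)$-entry. This makes the argument self-contained (independent of Corollary \ref{fraction-labeling-inj}) and proves more, at the cost of extra bookkeeping; the one point you state a bit too quickly is the propagation of the auxiliary bound along the leftmost branch, which needs $u_n<m_{1/n}$ --- true, and provable in a line since both sequences satisfy the same recurrence $x_{n+1}=(2k+3)x_n-x_{n-1}-k$ with $m_{1/n}$ starting ahead, but it does not follow from monotonicity of $m_{1/n}$ alone. (Incidentally, the paper's displayed recursion writes $C_{\frac{1}{1}}(k,-k)$ where the matrix used is visibly $C_{\frac{0}{1}}(k,-k)$; your version has the factor right.)
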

\begin{proof}
By the monotonically increasing property of indices (Corollary \ref{fraction-labeling-inj}), it suffices to show the statement for the case $t=\dfrac{1}{n}$. First, the $(1,1)$-entry of $C_{\frac{1}{1}}(k,-k)$ is $1$. Since the $(1,1)$-entry of $C_{\frac{1}{2}}(k,-k)$ is $k+2$, it is larger than that of $C_{\frac{1}{1}}(k,-k)$. We assume that the $(1,1)$-entry of $C_{\frac{1}{i}}(k,-k)$ is positive, and that of $C_{\frac{1}{i+1}}(k,-k)$ is larger than that of $C_{\frac{1}{i}}(k,-k)$. We will prove that the $(1,1)$-entry of $C_{\frac{1}{i+2}}(k,-k)$ is larger than that of $C_{\frac{1}{i+1}}(k,-k)$, in particular, it is positive. We set \[C_{\frac{1}{i}}(k,-k)=\begin{bmatrix}
    a&b\\c&d
\end{bmatrix},\quad C_{\frac{1}{i+1}}(k,-k)=\begin{bmatrix}
    a'&b'\\c'&d'
\end{bmatrix},\quad C_{\frac{1}{i+2}}(k,-k)=\begin{bmatrix}
    a''&b''\\c''&d''
\end{bmatrix}.\]Then, we have
\begin{align*}
    C_\frac{1}{i+1}(k,-k)&=C_\frac{1}{1}(k,-k)C_\frac{1}{i}(k,-k)-S\\&=\begin{bmatrix}
    -k&1\\-3k^2-3k-1&3k+3
\end{bmatrix}\begin{bmatrix}
    a&b\\c&d
\end{bmatrix}-\begin{bmatrix}
    k&0\\3k^2+3k&k
\end{bmatrix}\\
&=\begin{bmatrix}
    -ka+c-k &\ast\\-(3k^2+3k+1)a+(3k+3)c-3k^2-3k&\ast
\end{bmatrix}. 
\end{align*}
Therefore, we have
\[a'=-ka+c-k,\quad c'=-(3k^2+3k+1)a+(3k+3)c-3k^2-3k.\]
By assumption, we have $-ka+c-k\geq a>0$. Since
\[ C_\frac{1}{i+2}(k,-k)=C_\frac{1}{1}(k,-k)C_\frac{1}{i+1}(k,-k)-S,\]
we have
\begin{align*}
    a''-a'&=-(k+1)a'+c'-k\\
    &=-(k+1)(-ka+c-k)-(3k^2+3k+1)a+(3k+3)c-3k^2-4k\\
    &=(-2k^2-2k-1)a+(2k+2)c-2k^2-3k\\
    &\geq (-2k^2-2k-1)a+(2k+2)((k+1)a+k)-2k^2-3k\\
    &=(2k+1)a-k\geq k+1>0,
\end{align*}
as desired.
\end{proof}
\begin{lemma}\label{lem:index-of-Ct}
   For an irreducible fraction $t\in (0,1)$, the following equality holds:\[C_{t}(k,-k)=\begin{bmatrix}
   u_t&m_t\\ \ast &(3+3k)m_t-u_t-k
   \end{bmatrix}.\]
\end{lemma}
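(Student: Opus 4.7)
The plan is to read off $a := (C_t(k,-k))_{11}$ from the Cohn triple relation and prove directly that $a = u_t$; the trace condition $\mathrm{tr}(C_t(k,-k)) = (3+3k)m_t - k$ then forces the $(2,2)$-entry to be $(3+3k)m_t - k - u_t$, and the $(2,1)$-entry is irrelevant. Let $(r,t,s)$ be the unique Farey triple with middle entry $t$ (Proposition \ref{prop:property-farey}(2)) and write $(C_r,C_t,C_s) = (C_r(k,-k),C_t(k,-k),C_s(k,-k))$ for the corresponding $k$-generalized Cohn triple in $\mathrm{LGC}\mathbb T(k,-k)$. No induction on depth is needed.

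First I would localize $a$ in the open interval $(0, m_t/2)$. Positivity is precisely Lemma \ref{lem:1,1-positivity}. For the strict upper bound $a < m_t/2$, since $t<1$ strictly, Corollary \ref{fraction-labeling-inj} yields $I_{C_t(k,-k)} < I_{C_{1/1}(k,-k)}$, and the explicit form of $C_{1/1}(k,-k)$ displayed just above the statement gives $I_{C_{1/1}(k,-k)} = 1/(k+2) \leq 1/2$, so $a/m_t < 1/2$.

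The main algebraic step is to show $m_r a \equiv m_s \pmod{m_t}$. Writing $C_r = \begin{bmatrix} a_r & m_r \\ c_r & d_r \end{bmatrix}$ and $C_s = \begin{bmatrix} a_s & m_s \\ c_s & d_s \end{bmatrix}$ and comparing the $(1,1)$- and $(1,2)$-entries of the defining relation $C_t = C_r C_s - S$ yields
\[
a = a_r a_s + m_r c_s - k, \qquad m_t = a_r m_s + m_r d_s.
\]
Multiplying the first by $m_s$ and substituting $m_s c_s = a_s d_s - 1$ (from $\det C_s = 1$) produces the polynomial identity
\[
m_s a = a_s(a_r m_s + m_r d_s) - m_r - k m_s = a_s m_t - m_r - k m_s,
\]
i.e.\ $m_s(a+k) = a_s m_t - m_r$, hence $m_s(a+k) \equiv -m_r \pmod{m_t}$. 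On the other hand, $\det C_t = 1$ combined with the trace condition forces $d \equiv -a - k$ and $ad \equiv 1 \pmod{m_t}$, so $a(a+k) \equiv -1 \pmod{m_t}$. Multiplying the previous congruence by $a$ and collapsing via $a(a+k) \equiv -1$ gives $-m_s \equiv -m_r a \pmod{m_t}$, i.e.\ $m_r a \equiv m_s \pmod{m_t}$.

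Combining the two steps, $a \in (0, m_t/2)$ satisfies $m_r a \equiv m_s \pmod{m_t}$, so the uniqueness clause in the definition of the characteristic number forces $a = u_t$. I do not foresee a significant obstacle; the only mild subtlety is the strict inequality in the upper bound when $k = 0$, which is why the strict monotonicity in Corollary \ref{fraction-labeling-inj} must be invoked rather than any weak statement.
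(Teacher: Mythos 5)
Your proof is correct and follows essentially the same route as the paper's: both arguments localize the $(1,1)$-entry in $(0,m_t/2)$ via Lemma \ref{lem:1,1-positivity} and the strict index monotonicity of Corollary \ref{fraction-labeling-inj}, establish the congruence $m_r a \equiv m_s \pmod{m_t}$ from the Cohn triple relation, and then invoke the uniqueness clause in the definition of $u_t$. The only (harmless) difference is that the paper obtains the congruence in one step by comparing $(1,2)$-entries of $C_s = C_r^{-1}(C_t+S)$, whereas you reach it from $C_t = C_rC_s - S$ combined with $\det C_s=\det C_t=1$ and the trace condition; both computations are valid.
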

\begin{proof}
 We can check the statement for $t=\dfrac{1}{2}$ directly. We set
 \[C_{t}(k,-k)=\begin{bmatrix}
   a_t&m_t\\ \ast &(3+3k)m_t-a_t-k
   \end{bmatrix},\]and we will prove $a_t=u_t$. Let $(C_r(k,-k),C_t(k,-k),C_s(k,-k))$ be a $k$-generalized Cohn triple in $\mathrm{LGC}\mathbb T(k,-k)$. Since $C_s(k,-k)=C_r(k,-k)^{-1}(C_t(k,-k)+S)$, by comparing $(1,2)$-entries, we have $m_s=a_tm_r-(a_r+k)m_t$. Therefore, we have $m_ra_t\equiv m_s \mod m_t$, which implies $a_t\equiv \pm u_t \mod m_t$. By monotonically increasing property, the inequalities $I_\frac{0}{1}<I_{t}<I_{\frac{1}{1}}$ hold, and we have $-k<\dfrac{a_t}{m_t}<\dfrac{1}{k+2}$. By Lemma \ref{lem:1,1-positivity} and the previous inequality, we have $0<a_t<\dfrac{m_t}{k+2}\leq\dfrac{m_t}{2}$. Therefore, by the definition of $u_t$, we have $a_t=u_t$.
\end{proof}

In the proof of Lemma \ref{lem:index-of-Ct}, we have the following corollary, which is a generalization of \cite{aig}*{Remark 4.14}:

\begin{corollary}
The characteristic number $u_t$ is a solution of $m_rx\equiv m_s \mod m_t$.    
\end{corollary}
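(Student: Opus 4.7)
The plan is to observe that the desired congruence was already essentially proved inside the proof of Lemma \ref{lem:index-of-Ct}; what remains is to record that the sign is $+m_s$ rather than $\pm m_s$.

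First, I would recall from that proof that for the $k$-generalized Cohn triple $(C_r(k,-k), C_t(k,-k), C_s(k,-k))$ in $\mathrm{LGC}\mathbb{T}(k,-k)$, the defining Cohn relation $C_t(k,-k) = C_r(k,-k)\, C_s(k,-k) - S$ rearranges to
\[
C_s(k,-k) = C_r(k,-k)^{-1}\bigl(C_t(k,-k) + S\bigr).
\]
Then I would compare the $(1,2)$-entries of both sides. Using Lemma \ref{lem:index-of-Ct} to express the diagonal entries of $C_r(k,-k)$ in terms of $a_r$ and $m_r$ (together with $\det C_r(k,-k)=1$ to get the explicit form of the inverse), a straightforward entrywise calculation yields the honest integer equality $m_s = a_t m_r - (a_r+k) m_t$, with no ambiguity in sign.

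Reducing this equality modulo $m_t$ gives $m_r a_t \equiv m_s \pmod{m_t}$ with the $+$ sign explicitly, not merely $\pm m_s$. Finally, applying Lemma \ref{lem:index-of-Ct} to replace $a_t$ by the characteristic number $u_t$ yields $m_r u_t \equiv m_s \pmod{m_t}$, which is precisely the claim. The only real obstacle is notational bookkeeping when inverting $C_r(k,-k)$ and reading off the sign; no new ideas are required beyond those already present in the proof of Lemma \ref{lem:index-of-Ct}.
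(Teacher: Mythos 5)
Your proposal is correct and follows exactly the paper's route: the paper derives this corollary directly from the identity $m_s=a_tm_r-(a_r+k)m_t$ obtained in the proof of Lemma \ref{lem:index-of-Ct}, combined with the conclusion $a_t=u_t$ of that lemma. Your observation that this pins down the $+$ sign (rather than $\pm m_s$) is precisely the content the paper intends.
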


Moreover, we have the following property for $u_t$:

\begin{lemma}\label{lem:solution-ut-ut'}
The characteristic number $u_t$ is a solution to $x^2+kx+1\equiv 0 \mod m_t$.  
\end{lemma}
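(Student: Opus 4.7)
The plan is to read off the congruence directly from the structure of $C_t(k,-k)$ as an element of $SL(2,\mathbb{Z})$. By Lemma \ref{lem:index-of-Ct}, we have
\[C_{t}(k,-k)=\begin{bmatrix} u_t & m_t \\ v_t & (3+3k)m_t-u_t-k \end{bmatrix}\]
for some integer $v_t$. Since every $k$-generalized Cohn matrix lies in $SL(2,\mathbb{Z})$, its determinant equals $1$, so
\[u_t\bigl((3+3k)m_t-u_t-k\bigr)-m_t\, v_t = 1.\]

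The next step is simply to reduce this identity modulo $m_t$. Every term that carries an explicit factor of $m_t$ vanishes, and we are left with
\[u_t(-u_t-k)\equiv 1 \pmod{m_t},\]
which rearranges to $u_t^2+ku_t+1\equiv 0 \pmod{m_t}$, as required.

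There is essentially no obstacle here: the whole content of the lemma is packaged in Lemma \ref{lem:index-of-Ct} (which pins down both the $(1,1)$- and $(2,2)$-entries in terms of $u_t$) together with the $SL(2,\mathbb{Z})$ condition. The only thing to double-check is that the sign/normalization conventions agree, so that after reduction mod $m_t$ one indeed obtains $x^2+kx+1$ rather than, say, $x^2-kx+1$; that verification is immediate from the explicit $(2,2)$-entry $(3+3k)m_t-u_t-k$.
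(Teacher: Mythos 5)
Your proof is correct, but it takes a different route from the paper's. The paper argues at the level of the Markov triple: reducing the $k$-generalized Markov equation modulo $m_t$ gives $m_r^2+km_rm_s+m_s^2\equiv 0 \pmod{m_t}$, and combining this with the congruence $m_ru_t\equiv m_s \pmod{m_t}$ (the corollary extracted from the proof of Lemma \ref{lem:index-of-Ct}) yields $m_r^2(u_t^2+ku_t+1)\equiv 0 \pmod{m_t}$; one then cancels $m_r^2$ using the coprimality of $m_r$ and $m_t$ from Proposition \ref{relatively-prime}. You instead read the congruence off from the determinant condition $\det C_t(k,-k)=1$ applied to the explicit form of $C_t(k,-k)$ given in Lemma \ref{lem:index-of-Ct}; reducing $u_t\bigl((3+3k)m_t-u_t-k\bigr)-m_tv_t=1$ modulo $m_t$ immediately gives $u_t^2+ku_t+1\equiv 0 \pmod{m_t}$. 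Your argument is shorter, needs only the statement of Lemma \ref{lem:index-of-Ct} together with the $SL(2,\mathbb Z)$ membership built into the definition of a $k$-generalized Cohn matrix, and avoids invoking Proposition \ref{relatively-prime}; the paper's argument stays closer to the arithmetic of the triple $(m_r,m_t,m_s)$ and makes transparent why $u_t$ encodes the ratio $m_s/m_r$ modulo $m_t$. Both are complete proofs.
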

\begin{proof}
    By $m_r^2+km_rm_s+m_s^2\equiv 0\mod m_t$ and $m_ru_t\equiv m_s \mod m_t$, we have
    \[m_r^2u_t^2+k m_r^2u_t+m_r^2\equiv 0 \mod m_t.\]
Since $m_r$ and $m_t$ are relatively prime, we have      \[u_t^2+ ku_t+1\equiv 0 \mod m_t,\]
as desired. 
\end{proof}

\begin{lemma}\label{lem:ut+k<mt/2}
 For an irreducible fraction $t\in(0,1)$, the inequality $u_t+k<\dfrac{m_t}{2}$ holds. 
\end{lemma}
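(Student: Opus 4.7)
The plan is to sharpen the upper bound on $u_t$ that is already established inside the proof of Lemma~\ref{lem:index-of-Ct}: namely $u_t < m_t/(k+2)$. Once this observation is in hand, what remains is a routine numerical check, and the argument naturally splits into the degenerate case $k=0$ and the generic case $k\geq 1$.

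First, I will extract from the proof of Lemma~\ref{lem:index-of-Ct} the strict inequality $0<u_t<m_t/(k+2)$; this comes from the monotonicity $I_{0/1}<I_t<I_{1/1}=1/(k+2)$ combined with the positivity of the $(1,1)$-entry of $C_t(k,-k)$ supplied by Lemma~\ref{lem:1,1-positivity}. In the degenerate case $k=0$ this already reads $u_t+k=u_t<m_t/2$, and there is nothing further to prove.

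For $k\geq 1$, I will rearrange: $u_t+k<m_t/2$ follows from $\tfrac{m_t}{k+2}+k\leq \tfrac{m_t}{2}$, and clearing denominators reduces this in turn to the purely numerical lower bound $m_t\geq 2(k+2)$. To secure this bound I use that, since $t\in(0,1)$ is irreducible, $m_t$ is the maximal entry of a $k$-generalized Markov triple appearing in $\mathrm{LM}\mathbb T(k)$. The root of $\mathrm{LM}\mathbb T(k)$ corresponds to the Farey triple $(\tfrac{0}{1},\tfrac{1}{2},\tfrac{1}{1})$ with $m_{1/2}=2k^2+6k+5$ (read off from the $(1,2)$-entry of $C_{1/2}(k,-k)$ displayed at the start of this section). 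A short comparison using the Vieta-jumping formulas shows that the middle (hence maximal) entry strictly increases at every descent in $\mathrm{LM}\mathbb T(k)$, so $m_t\geq 2k^2+6k+5$ for every irreducible $t\in(0,1)$, and $2k^2+6k+5\geq 2(k+2)$ is immediate from $2k^2+4k+1\geq 0$.

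There is no serious obstacle; the only delicate point is that strictness of the final inequality relies on the first step $u_t<m_t/(k+2)$ being strict, while the second step $\tfrac{m_t}{k+2}+k\leq \tfrac{m_t}{2}$ need not be. That is automatic from the monotonicity of indices. The whole proof is essentially bookkeeping, once one notices that Lemma~\ref{lem:index-of-Ct} furnishes the sharper intermediate bound $u_t<m_t/(k+2)$ rather than merely $u_t<m_t/2$.
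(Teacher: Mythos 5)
Your proposal is correct and uses exactly the two ingredients of the paper's own proof, namely the bound $u_t<m_t/(k+2)$ from the monotonicity of indices and the minimality $m_t\geq 2k^2+6k+5$; the only cosmetic difference is that you split off $k=0$, whereas the paper runs a single chain of inequalities (whose final step $\frac{k(2k^2+6k+5)}{2(k+2)}>k$ is an equality at $k=0$, so your case split is arguably the cleaner bookkeeping). No gap.
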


\begin{proof}
    Since $m_t\geq 2k^2+6k+5$ and $u_t<\dfrac{m_t}{k+2}$ by the monotonically increasing property of the index, we have \[\dfrac{m_t}{2}-u_t>\left(\dfrac{1}{2}-\dfrac{1}{k+2}\right)m_t\geq\dfrac{k(2k^2+6k+5)}{2(k+2)}>k,\]
    as desired.
\end{proof}
Now, we are ready to prove Theorem \ref{thm:criterion-intro}.

\begin{proof}[Proof of Theorem \ref{thm:criterion-intro}]
By Proposition \ref{prop:equivalent-condition-each}, it suffices to show that a fraction $t\in (0,1)$ which satisfies $b=m_t$ is unique.
Let $t$ and $\tau$ be irreducible fractions in $(0,1)$ satisfying $b=m_t=m_{\tau}$. We will prove $t=\tau$. By Lemma \ref{lem:solution-ut-ut'}, $u_t$ and $u_\tau$ are solutions to $x^2+ kx+1\equiv 0 \mod b$. By Lemma \ref{lem:ut+k<mt/2}, we have $u_t,u_{\tau}<\dfrac{b}{2}-k$. Let $\bullet \in \{t,\tau\}$. Now, $b-(u_{\bullet}+k)$ is also a solution to $x^2+kx+1\equiv 0 \mod b$, and $b-(u_{\bullet}+k)\geq \dfrac{b}{2}$. By the assumption that the number of solutions is at most two, $u_t$ and $u_{\tau}$ are the unique solution of
\begin{align}
    x^2+kx+1\equiv 0 \mod b \text{\ and\ } 0<x<\dfrac{b}{2}.
\label{eq:unique-solution}\end{align}
Therefore, we have $u_t=u_\tau$. Moreover, by Lemma \ref{lem:index-of-Ct}, we have $C_{t}(k,-k)=C_{\tau}(k,-k)$. By Theorem \ref{thm:Cohn-distinct}, we have $t=\tau$. 
\end{proof}

\subsection{Uniqueness theorem of prime power and twice of prime power}

By Theorem \ref{thm:criterion-intro}, we can prove Theorem \ref{thm:intro1}.
%\begin{theorem}
%For any $k\in \mathbb Z_{\geq 0}$, Conjecture \ref{conj:intro} is true for $b=p$ or $2p$, where $p$ is a prime.
%\end{theorem}
To use of Theorem \ref{thm:criterion-intro}, the following lemma is essential.

\begin{lemma}\label{lem:at-most-2-solution-p}
   Let $p$ be a prime. The number of solutions to $x^2+ kx+1\equiv 0 \mod p$ is at most two.
\end{lemma}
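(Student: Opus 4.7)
The plan is to reduce to the well-known fact that a polynomial of degree $d$ over a field has at most $d$ roots. Since $p$ is prime, $\mathbb{F}_p = \mathbb{Z}/p\mathbb{Z}$ is a field (in particular, an integral domain), so the quadratic polynomial $f(x) = x^2 + kx + 1 \in \mathbb{F}_p[x]$ has at most two roots in $\mathbb{F}_p$. That is essentially the entire proof.

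If one prefers a more explicit argument, I would split into cases according to the parity of $p$. For $p$ odd, $2$ is invertible modulo $p$, and the standard completing-the-square trick rewrites $x^2 + kx + 1 \equiv 0 \pmod{p}$ as $(2x + k)^2 \equiv k^2 - 4 \pmod{p}$; then the number of solutions in $x$ equals the number of solutions in $y := 2x + k$ (since $x \mapsto 2x + k$ is a bijection on $\mathbb{Z}/p\mathbb{Z}$), which is $0$, $1$, or $2$ depending on whether $k^2 - 4$ is a nonresidue, zero, or a nonzero square mod $p$. For $p = 2$, a direct check of the two residues $x \equiv 0, 1$ shows there is at most one solution (in fact, exactly one when $k$ is even and none when $k$ is odd).

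I do not expect any obstacle here: this is a one-line consequence of elementary number theory and is independent of the Cohn-triple machinery developed in the preceding sections. The lemma is stated merely to package the fact in a form that feeds directly into the criterion of Theorem~\ref{thm:criterion-intro}, so the proof can be kept to one or two sentences.
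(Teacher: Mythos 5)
Your proposal is correct and rests on the same principle as the paper's proof: the paper simply writes out the integral-domain argument explicitly (if $x_1\not\equiv x_2$ are both roots, then $(x_1-x_2)(x_1+x_2+k)\equiv 0 \pmod p$ forces $x_1+x_2+k\equiv 0 \pmod p$, so any third root $x_3\not\equiv x_1$ gives $x_2\equiv x_3$), which is just the standard proof that a quadratic over $\mathbb{F}_p$ has at most two roots, specialized to $x^2+kx+1$. Your completing-the-square variant is also valid but not needed.
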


\begin{proof}
   Let $x_1$, $x_2$ be solutions to $x^2+ kx+1\equiv 0 \mod p$ such that $x_1\not\equiv x_2$ holds. Since $(x_1-x_2)(x_1+x_2+ k)\equiv 0\mod p$, we have $x_1+x_2+ k\equiv 0\mod p$. Let $x_3$ be another solution to $x^2+ kx+1\equiv 0 \mod p$ such that $x_1\not\equiv x_3$ holds. Then, we have $x_1+x_3+ k\equiv 0\mod p$. Therefore, we have $x_2-x_3\equiv 0 \mod p$.
\end{proof}

\begin{proof}[Proof of Theorem \ref{thm:intro1}]
 When $b=4$, we have $k=2$. Since $m_t>4$ for any $t\in(0,1)$, we have a conclusion. When $b=p$, it follows from Theorem \ref{thm:criterion-intro} and Lemma \ref{lem:at-most-2-solution-p}. We assume $b=2p$ and $p\neq 2$. Clearly, $x^2+kx+1\equiv 0 \mod 2$ has at most one solution. Since $2$ and $p$ are relatively prime, by using Lemma \ref{lem:at-most-2-solution-p} and Chinese reminder theorem, $x^2+kx+1\equiv 0 \mod 2p$ has at most two solutions. Therefore, applying Theorem \ref{thm:criterion-intro}, we have the conclusion.  
\end{proof}
Next, we consider the case where $b=p^m$ or $2p^m$. In this case, using Theorem \ref{thm:criterion-intro} requires some constraints regarding $k$ and $p$.

 \begin{theorem}\label{thm:k-p-markov-conj}
For a $k$-generalized Markov number $b$, if there exists an odd prime $p$ and $m\in \mathbb Z_{\geq 2}$ such that $b=p^m$ or $2p^m$ and $k^2-4 \not\equiv 0 \mod p$, then there is a unique $k$-generalized Markov triple $(a,b,c)$ up to order such that $\max\{a,b,c\}=b$.  
\end{theorem}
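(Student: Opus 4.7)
The plan is to reduce to the criterion of Theorem \ref{thm:criterion-intro} and then control the number of solutions to $x^{2}+kx+1\equiv 0 \pmod{b}$ via Hensel's lemma and the Chinese remainder theorem. By Theorem \ref{thm:criterion-intro}, since $b=p^m$ or $2p^m$ with $m\geq 2$ is certainly not equal to $1$ or $k+2$, it suffices to show that the congruence $f(x):=x^{2}+kx+1\equiv 0\pmod{b}$ has at most two solutions.

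First I would treat the prime-power case $b=p^{m}$. By Lemma \ref{lem:at-most-2-solution-p}, the reduction mod $p$ has at most two solutions, so the goal reduces to verifying that every mod-$p$ solution lifts uniquely to a mod-$p^{m}$ solution, i.e.\ Hensel's lemma applies. The Hensel condition requires $f'(x_{0})=2x_{0}+k\not\equiv 0\pmod{p}$ at each root $x_{0}$. Since $p$ is odd, $2$ is invertible mod $p$, so $2x_{0}+k\equiv 0\pmod{p}$ would force $x_{0}\equiv -k/2\pmod{p}$; plugging in gives
\[
f(-k/2)\;\equiv\;\tfrac{k^{2}}{4}-\tfrac{k^{2}}{2}+1\;\equiv\;\tfrac{4-k^{2}}{4}\pmod{p},
\]
which is nonzero under the hypothesis $k^{2}-4\not\equiv 0\pmod{p}$. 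Hence $f$ has simple roots modulo $p$, Hensel lifts each uniquely, and the number of solutions modulo $p^{m}$ equals the number modulo $p$, which is at most two.

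For the case $b=2p^{m}$, since $p$ is odd we have $\gcd(2,p^{m})=1$, so by the Chinese remainder theorem the solutions to $f(x)\equiv 0\pmod{2p^{m}}$ are in bijection with pairs of solutions modulo $2$ and modulo $p^{m}$. Modulo $2$ the equation $x^{2}+kx+1\equiv 0$ has at most one solution (indeed, $x=0$ never works, and $x=1$ works iff $k$ is even), while modulo $p^{m}$ the previous paragraph gives at most two. The product gives at most two solutions mod $b$, and we again conclude by Theorem \ref{thm:criterion-intro}.

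The only potential obstacle is the discriminant-style calculation that rules out $f'\equiv 0$ at a root; once that is in place the argument is entirely standard (Hensel plus CRT). Note that the hypothesis $k^{2}-4\not\equiv 0\pmod{p}$ is precisely what is needed, which also illuminates why such a condition appears in the statement of Theorem \ref{thm:intro2}: the cases $k=2$, and $k$ even with $(k/2\pm 1)$ not divisible by $p^{2}$, and $k$ odd with $k\pm 2$ not divisible by $p^{2}$, are exactly the instances where one controls when $k^{2}-4=(k-2)(k+2)$ can fail to vanish mod $p$ (or, in the stronger statement there, has limited $p$-adic valuation).
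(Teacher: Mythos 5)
Your proposal is correct, and its overall skeleton matches the paper's: reduce to Theorem \ref{thm:criterion-intro}, bound the number of solutions to $x^{2}+kx+1\equiv 0 \pmod{p^{m}}$ by two, and handle the factor of $2$ in $b=2p^{m}$ by the Chinese remainder theorem together with the observation that there is at most one solution mod $2$. Where you differ is in the key counting step (the paper's Lemma \ref{lem:at-most-2-solutions-pm-even}): the paper argues directly that for two incongruent solutions $x_{1},x_{2}$ one has $(x_{1}-x_{2})(x_{1}+x_{2}+k)\equiv 0\pmod{p^{m}}$, rules out the possibility that both factors are divisible by $p$ by deriving $k^{2}\equiv 4\pmod p$ from $2x_{1}+k\equiv 0\pmod p$, and concludes that $x_{2}$ is determined mod $p^{m}$ by $x_{1}$; you instead invoke Lemma \ref{lem:at-most-2-solution-p} for the mod-$p$ count and lift via Hensel's lemma, with the same discriminant computation $f(-k/2)\equiv(4-k^{2})/4$ guaranteeing simple roots. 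The paper explicitly acknowledges your route in the remark following the theorem, noting that the Hensel argument yields exactly the same condition $k^{2}-4\not\equiv 0\pmod p$, so nothing is gained or lost in generality; the paper's version is self-contained and elementary, while yours is shorter given standard machinery. One small point worth tightening: you assert that $b=p^{m}$ or $2p^{m}$ with $m\geq 2$ "is certainly not equal to $1$ or $k+2$"; the case $b=k+2$ is not automatic on its face (e.g.\ $k=7$, $b=9$), but it is in fact excluded by your own hypothesis, since $b=k+2$ forces $p\mid k+2$ and hence $k^{2}-4\equiv 0\pmod p$ --- alternatively, as the paper does, one can simply dispose of $b=1$ and $b=k+2$ separately since uniqueness there is immediate.
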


It is proved by the following lemma and Theorem \ref{thm:criterion-intro}.

\begin{lemma}\label{lem:at-most-2-solutions-pm-even}
 Let $p$ be an odd prime and $m\in \mathbb Z_{\geq 2}$. If $k^2-4\not\equiv 0 \mod p$ holds, then the number of solutions to $x^2+ kx+1\equiv 0 \mod p^m$ is at most two.
\end{lemma}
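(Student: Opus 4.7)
The plan is to prove this via Hensel's lemma, exploiting the discriminant hypothesis $k^2 - 4 \not\equiv 0 \pmod{p}$, which says precisely that the quadratic $f(x) := x^2 + kx + 1$ is separable modulo $p$.

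First, I would observe that every solution to $f(x) \equiv 0 \pmod{p^m}$ reduces to a solution modulo $p$, so it suffices to show that each solution modulo $p$ admits a \emph{unique} lift to a solution modulo $p^m$. Combined with Lemma \ref{lem:at-most-2-solution-p} (which bounds the number of solutions modulo $p$ by two), this will yield at most two solutions modulo $p^m$.

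The key identity is the derivative computation: if $x_0$ satisfies $x_0^2 + kx_0 + 1 \equiv 0 \pmod{p}$, then
\[
(f'(x_0))^2 = (2x_0 + k)^2 = 4(x_0^2 + kx_0 + 1) + (k^2 - 4) \equiv k^2 - 4 \pmod{p},
\]
so the hypothesis $k^2 - 4 \not\equiv 0 \pmod p$ forces $f'(x_0) \not\equiv 0 \pmod p$ (here it is crucial that $p$ is odd, so $4$ is a unit). This is exactly the non-degeneracy condition needed for Hensel's lemma.

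With $f'(x_0) \not\equiv 0 \pmod p$ in hand, the standard Hensel lifting argument shows that $x_0$ lifts to a unique $\tilde{x}_0$ modulo $p^m$ with $f(\tilde{x}_0) \equiv 0 \pmod{p^m}$ and $\tilde{x}_0 \equiv x_0 \pmod p$. Since every solution modulo $p^m$ arises this way from a solution modulo $p$, and Lemma \ref{lem:at-most-2-solution-p} gives at most two of the latter, the claim follows.

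I do not expect a serious obstacle: the entire argument is a one-step application of Hensel. The only minor point to verify carefully is the derivative identity, which requires $p$ odd so that $4$ is invertible modulo $p$, but this is already part of the hypothesis.
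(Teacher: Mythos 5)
Your proof is correct, but it takes a different route from the paper's. You reduce modulo $p$, invoke Lemma \ref{lem:at-most-2-solution-p} to bound the number of residues there by two, and then lift each one uniquely via Hensel's lemma; the discriminant identity $(2x_0+k)^2 = 4f(x_0) + (k^2-4)$ correctly shows that every root of $f$ modulo $p$ is simple, which is exactly the non-degeneracy Hensel needs. (One small quibble: you do not actually need $4$ to be a unit for that identity --- it holds over $\mathbb Z$ --- only that $\mathbb Z/p\mathbb Z$ is an integral domain so that $(f'(x_0))^2\not\equiv 0$ forces $f'(x_0)\not\equiv 0$; the oddness of $p$ is what makes the hypothesis $k^2-4\not\equiv 0$ usable elsewhere.) The paper instead argues directly and self-containedly at the level of $p^m$: given two distinct solutions $x_1\not\equiv x_2 \bmod p^m$, it factors $(x_1-x_2)(x_1+x_2+k)\equiv 0 \bmod p^m$, uses the hypothesis $k^2-4\not\equiv 0 \bmod p$ to rule out that both factors are divisible by $p$ (via a case split on the parity of $2x_1+k$, since $k$ may be odd), concludes $x_1+x_2+k\equiv 0\bmod p^m$, and hence that any third solution coincides with $x_2$. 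The paper's proof does not use Lemma \ref{lem:at-most-2-solution-p} at all, whereas yours depends on it; in exchange, your derivative computation is cleaner than the paper's parity case analysis. The authors themselves acknowledge your route in the remark following Theorem \ref{thm:k-p-markov-conj}, noting that the same condition $k^2-4\not\equiv 0\bmod p$ arises as the Hensel non-degeneracy condition and cannot be removed either way.
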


\begin{proof}
Let $x_1$, $x_2$ be solutions to $x^2+ kx+1\equiv 0 \mod p^m$ such that $x_1\not\equiv x_2$ holds. Then, we have $(x_1-x_2)(x_1+x_2+ k)\equiv 0\mod p^m$. We assume that $(x_1-x_2)\equiv (x_1+x_2+ k)\equiv 0\mod p$. Then, we have $2x_1+ k\equiv 0 \mod p$. If $2x_1+ k$ is even, there exists $n\in \mathbb Z$ such that $2x_1+ k=2np$. Therefore, we have $x_1=np- \dfrac{k}{2}$. However, we have $-1\equiv x_1(x_1+ k)\equiv -\dfrac{k^2}{4}\mod p$, which conflicts with the assumption. If $2x_1+ k$ is odd, then there exists $n\in \mathbb Z$ such that $2x_1+ k=(2n+1)p$. Therefore, we have $x_1=np+ \dfrac{p- k}{2}$. However, we have $-1\equiv x_1(x_1+ k)\equiv \dfrac{p^2-k^2}{4}\mod p$, which conflicts with the assumption. Therefore, we have $x_1+x_2+ k\equiv 0\mod p^m$. Let $x_3$ be another solution to $x^2+ kx+1\equiv 0 \mod p^m$ such that $x_1\not\equiv x_3$ holds. Then, we have $x_1+x_3+ k\equiv 0\mod p^m$. Therefore, we have $x_2-x_3\equiv 0 \mod p^m$.    
\end{proof}

\begin{proof}[Proof of Theorem \ref{thm:k-p-markov-conj}]
    The cases where $b=1$ and $k+2$ are clear. We assume that $b\neq 1$ or $k+2$. The statement for $b=p^m$ follows from Theorem \ref{thm:criterion-intro} and Lemma \ref{lem:at-most-2-solutions-pm-even}. We assume $b=2p^m$.  We note that $x^2+kx+1\equiv 0 \mod 2$ has at most one solution. Since $2$ and $p^m$ is relatively prime, by Chinese reminder theorem and Lemma \ref{lem:at-most-2-solutions-pm-even}, $x^2+kx+1\equiv 0 \mod 2p^m$ has at most two solutions. Therefore, applying Theorem \ref{thm:criterion-intro}, we have the conclusion.  
\end{proof}

\begin{remark}
Theorem \ref{thm:k-p-markov-conj} is also proved by using Hensel's lifting lemma. The condition $k^2-4\not\equiv 0 \mod p$ in Theorem \ref{thm:k-p-markov-conj} is derived from the existence conditions of $x$ satisfying $f(x)=x^2+ kx+1\equiv 0 \mod p^m$ and $f'(x)=2x+ k\not\equiv 0 \mod p$. Therefore, we cannot remove this condition even if we use Hensel's lifting lemma. 
\end{remark}

In Theorem \ref{thm:k-p-markov-conj}, we only consider the case where $p$ is odd. There are no $k$-generalized Markov numbers of the form $2^m$ if $k^2-4 \not\equiv 0 \mod 2$. More strongly, the following lemma holds:

\begin{lemma}\label{lem:even-Markov}
For $k\in \mathbb Z_{\geq 0}$, if there exists an even $k$-generalized Markov number, then $k$ is even.
\end{lemma}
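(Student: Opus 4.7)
The plan is to prove the contrapositive: if $k$ is odd, then every $k$-generalized Markov number is odd. Since by Proposition~\ref{prop:all-markov} every $k$-generalized Markov triple appears in $\mathrm{WM}\mathbb{T}(k)$, it suffices to show by induction on the distance from the root that every vertex $(a,b,c)$ of $\mathrm{WM}\mathbb{T}(k)$ consists of three odd integers.

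The base case is immediate: the root is $(1,1,1)$. For the inductive step, I will use the alternative presentation of the Vieta jumpings given just after Corollary~\ref{cor:all-markov}, namely
\[
\frac{a^2+kab+b^2}{c} = (3+3k)ab - c - k(a+b), \qquad \frac{b^2+kbc+c^2}{a} = (3+3k)bc - a - k(b+c),
\]
which avoids having to analyze divisibility of a quotient. Assume $(a,b,c)$ has all three entries odd and that $k$ is odd. Then $3+3k = 3(k+1)$ is even, so $(3+3k)ab$ is even; the sum $a+b$ of two odd numbers is even, so $k(a+b)$ is even; and $c$ is odd. Therefore $(3+3k)ab - c - k(a+b)$ is odd, and the same argument works for the right Vieta jumping. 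Hence each child of $(a,b,c)$ again has all odd entries, completing the induction.

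This proves that when $k$ is odd, no $k$-generalized Markov number can be even, which is exactly the contrapositive of the lemma. There is no real obstacle here: the only minor subtlety is preferring the polynomial form of the Vieta jumping over the quotient form, so that the parity computation is elementary.
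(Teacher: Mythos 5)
Your proof is correct and follows essentially the same route as the paper: both argue the contrapositive by induction on the tree $\mathrm{WM}\mathbb{T}(k)$, using the polynomial form $(3+3k)ab-c-k(a+b)$ of the Vieta jumping to check that all entries stay odd when $k$ is odd. The parity computation you spell out is exactly the one the paper leaves implicit.
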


\begin{proof}
We will prove that if $k$ is odd, all $k$-generalized Markov number are odd. Clearly, $(1,1,1)$ is a $k$-generalized Markov triple whose elements are odd. If all of $(a,b,c)$ are odd, then  so are $(3+3k)ab-c-k(a+b)$ and $(3+3k)bc-a-k(b+c)$. Therefore, all elements of each $k$-generalized Markov triple of $\mathrm{WM}\mathbb T(k)$ are odd. By Proposition \ref{prop:all-markov}, we have the conclusion. 
\end{proof}

Furthermore, weakening the constraint on $k$ to $\dfrac{k^2}{4}-1 \not\equiv 0 \mod 2$ does not yield any additional properties:

\begin{proposition}\label{prop:4-Markov}
If there exists a $k$-generalized Markov number $b$ which is a multiple of $4$, then $k\equiv 2 \mod 4$ holds. In particular, $\dfrac{k^2}{4}-1\equiv 0 \mod 2$ holds.    
\end{proposition}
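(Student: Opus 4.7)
The plan is straightforward: invoke Lemma \ref{lem:even-Markov} to conclude that $k$ must be even, and then rule out the residue class $k \equiv 0 \pmod 4$ by a short modular argument. Since $b$ is a multiple of $4$, in particular $b$ is even, so Lemma \ref{lem:even-Markov} forces $k$ to be even; the remaining task is to exclude $k \equiv 0 \pmod 4$.

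The main step will be to assume $k \equiv 0 \pmod 4$ and show that no $k$-generalized Markov number can then be divisible by $4$. Reducing $\mathrm{GME}(k)$ modulo $4$, every term carrying a factor of $k$ vanishes, so every $k$-generalized Markov triple $(x,y,z)$ satisfies
\[
x^2+y^2+z^2 \equiv 3xyz \pmod 4.
\]
Suppose for contradiction $4 \mid z$. By Proposition \ref{relatively-prime}, both $x$ and $y$ are coprime to $z$ and therefore odd, which gives $x^2+y^2+z^2 \equiv 1+1+0 \equiv 2 \pmod 4$, while $3xyz \equiv 0 \pmod 4$ because $4 \mid z$. This contradicts the displayed congruence, so no entry of any $k$-generalized Markov triple is divisible by $4$. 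Combined with $k$ even from the first paragraph, the hypothesis $4 \mid b$ forces $k \equiv 2 \pmod 4$.

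For the ``in particular'' clause, writing $k = 4n+2$ gives $k/2 = 2n+1$, which is odd, so $k^2/4 = (2n+1)^2$ is odd and $k^2/4 - 1$ is even. There is no real obstacle in this proof; the argument is essentially a one-line congruence check once Lemma \ref{lem:even-Markov} and Proposition \ref{relatively-prime} are available, and the only subtlety to keep in mind is that coprimality (not merely the Markov equation mod $4$) is needed to conclude that $x$ and $y$ are odd whenever $z$ is a multiple of $4$.
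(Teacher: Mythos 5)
Your proof is correct, but it takes a genuinely different route from the paper's. The paper argues through the matrix framework it has built up: it takes a $k$-generalized Cohn matrix $C=\begin{bmatrix} a & 4m\\ c & (3+3k)\cdot 4m-a-k\end{bmatrix}$ associated with $b=4m$ (whose existence comes from Corollary \ref{cor:CT-MT}), uses $\det C=1$ to get $a(-a-4n)\equiv 1 \pmod{4m}$ when $k=4n$, and derives a contradiction by checking the parity cases of $a$: if $a$ is even the left side is even, and if $a$ is odd then $a(-a-4n)\equiv -1\pmod 4$. You instead reduce the defining equation $\mathrm{GME}(k)$ directly modulo $4$: with $4\mid k$ the equation collapses to $x^2+y^2+z^2\equiv 3xyz \pmod 4$, and the pairwise coprimality from Proposition \ref{relatively-prime} forces $x,y$ odd whenever $4\mid z$, giving $2\equiv 0\pmod 4$. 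Both arguments are sound; yours is more elementary in that it needs only the equation and Proposition \ref{relatively-prime} rather than the existence of Cohn matrices, and you are right to flag that coprimality is the essential extra input (the congruence alone would not rule out, say, $z\equiv 2\pmod 4$, where both sides are $2$ bmod $4$). The paper's version has the advantage of staying inside the Cohn-matrix formalism it uses for the neighboring results such as Proposition \ref{prop:sufficient-condition-p^m}. Your handling of the ``in particular'' clause is also fine.
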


\begin{proof}
 Since $b$ is even, we have $k\not\equiv 1$ or $3\mod 4$ by Lemma \ref{lem:even-Markov}. Therefore, it suffices to show that $k\not\equiv 0 \mod 4$. We assume that $k=4n$. We set $b=4m$. We consider a $k$-generalized Cohn matrix corresponding to $4m$,
 \[C=\begin{bmatrix}
     a & 4m\\ c& (3+3(4n))\cdot 4m-a-4n
 \end{bmatrix}.\]
 Since $\det C=1$, we have
 \[1\equiv a(-a-4n) \mod 4m.\]
 If $a=2\ell$, then $a(-a-4n)$ is even, and thus $1\not\equiv a(-a-4n) \mod 4m$. If $a=2\ell+1$, then we have $a(-a-4n)\equiv -1 \mod 4$. Therefore, such a $k$-generalized Cohn matrix would not exist, which is a contradiction. 
\end{proof}

We provide a simpler sufficient condition about $p$ than Theorem \ref{thm:k-p-markov-conj}.

\begin{proposition}\label{prop:enough-large-prime}
    Let $k\in \mathbb Z_{\geq 0}$ and $p$ be an odd prime. If $k$ and $p$ satisfy either of the following two conditions, then the condition $k^2-4 \not\equiv 0 \mod p$ in Theorem \ref{thm:k-p-markov-conj} holds:
\begin{itemize}
    \item [(1)] $k\geq 4$ is even and $p>\dfrac{k}{2}+1$.
    \item [(2)] $k$ is odd and $p>k+2$.
\end{itemize}
\end{proposition}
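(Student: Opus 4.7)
The plan is to show directly that the odd prime $p$ does not divide $k^2-4$, using the factorization $k^2-4=(k-2)(k+2)$. By primality, it suffices in each case to rule out $p \mid k-2$ and $p \mid k+2$ separately, which I would do by combining a size bound (from the hypothesis on $p$) with a parity argument.

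For case (1), where $k \geq 4$ is even, both $k-2$ and $k+2$ are strictly positive even integers, and the hypothesis $p > \tfrac{k}{2}+1$ rewrites as $2p > k+2$. So any positive multiple of $p$ that equals $k+2$ must in fact equal $p$ itself; but $p = k+2$ is impossible since $k+2$ is even while $p$ is odd. The same size bound applies to $k-2$ (using $k \geq 4$ so that $k-2 \geq 2 > 0$ and $2p > k+2 > k-2 > 0$), and the same parity obstruction rules out $p = k-2$. Hence $p \nmid k^2-4$.

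For case (2), where $k$ is odd, no parity argument is needed: since $k \geq 1$, one has $|k+2| = k+2 < p$ and $|k-2| \leq k+2 < p$, so $p$ cannot divide either factor of $k^2-4$. The only point requiring a second of care is $k=1$, where $k-2 = -1$; but $|-1| = 1 < p$, and divisibility still fails. I do not expect any real obstacle in the proof — it is essentially a combination of a size bound with a parity check, and the two parity regimes of $k$ are exactly what dictates the split into the two stated conditions on $p$.
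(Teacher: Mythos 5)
Your argument is correct and is essentially the paper's: both proofs reduce to showing that $p$ divides neither factor of a factorization of $k^2-4$ via a size bound coming from the hypothesis on $p$. The only cosmetic difference is in case (1), where the paper factors $\frac{k^2}{4}-1=\left(\frac{k}{2}+1\right)\left(\frac{k}{2}-1\right)$ so that both factors lie strictly between $0$ and $p$ (and then uses that $4$ is invertible modulo the odd prime $p$), whereas you factor $(k-2)(k+2)$ directly and compensate for the weaker bound $2p>k+2$ with the parity observation that the even number $k\pm 2$ cannot equal the odd prime $p$.
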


\begin{proof}
We will prove (1). We have $\dfrac{k}{2}+1\not\equiv 0\mod p$ and $\dfrac{k}{2}-1\not\equiv 0\mod p$. Therefore, we have $\dfrac{k^2}{4}-1\not\equiv 0\mod p$. Since $p$ is an odd prime, it is equivalent to $k^2-4 \not\equiv 0\mod p$. We can prove (2) in the same way.
\end{proof}

Proposition \ref{prop:enough-large-prime} implies that if $p$ is large enough, we can apply Theorem \ref{thm:k-p-markov-conj} to $p^m$ and $2p^m$. More strongly, we have the following proposition:

\begin{proposition}\label{prop:sufficient-condition-p^m}
  Let $k\in \mathbb Z_{\geq 0}$ and $p$ be an odd prime. If $k$ and $p$ satisfy either of the following two conditions, then the condition $k^2-4 \not\equiv 0 \mod p$ in Theorem \ref{thm:k-p-markov-conj} holds:
\begin{itemize}
    \item [(1)] $k\geq 4$ is even, and both $\dfrac{k}{2}+1$ and $\dfrac{k}{2}-1$ are not divided by $p^2$,
    \item [(2)] $k$ is odd, and both $k+2$ and $k-2$ are not divided by $p^2$.
\end{itemize}
\end{proposition}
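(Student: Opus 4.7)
\textbf{Plan for proving Proposition \ref{prop:sufficient-condition-p^m}.} Read against the backdrop of Theorems \ref{thm:criterion-intro} and \ref{thm:k-p-markov-conj}, the target is to show that the listed conditions yield the uniqueness conclusion of Theorem \ref{thm:k-p-markov-conj} via the weaker numerical hypothesis $p^{2}\nmid k^{2}-4$ (rather than $p\nmid k^{2}-4$). My plan splits into two independent steps: a purely number-theoretic step reducing $p^{2}\nmid k^{2}-4$ to the hypotheses on $k\pm 2$ (or $k/2\pm 1$), and a structural step generalising Lemma \ref{lem:at-most-2-solutions-pm-even} to this weaker assumption.

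For the first step I will use $k^{2}-4=(k-2)(k+2)$ together with $\gcd(k-2,k+2)\mid 4$. Since $p$ is odd, $p$ cannot simultaneously divide $k-2$ and $k+2$, so writing $v_{p}$ for the $p$-adic valuation, $v_{p}(k^{2}-4)=\max\{v_{p}(k-2),v_{p}(k+2)\}$, which is bounded by $1$ by the hypothesis of case (2). In case (1) I first rewrite $k^{2}-4=4(k/2-1)(k/2+1)$; since $p$ is odd the factor $4$ contributes nothing to $v_{p}$, and the analogous gcd argument (now $\gcd(k/2-1,k/2+1)\mid 2$) gives $v_{p}(k^{2}-4)=\max\{v_{p}(k/2-1),v_{p}(k/2+1)\}\le 1$.

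For the second step I will complete the square. Because $p$ is odd, $4$ is invertible modulo $p^{m}$, so $x^{2}+kx+1\equiv 0\mod p^{m}$ is equivalent to $(2x+k)^{2}\equiv k^{2}-4\mod p^{m}$, and the bijection $x\mapsto y=2x+k$ reduces the count to the number of $y\mod p^{m}$ with $y^{2}\equiv k^{2}-4$. Hensel's lemma immediately gives at most two solutions when $p\nmid k^{2}-4$. In the remaining subcase $v_{p}(k^{2}-4)=1$, which is the new content beyond Lemma \ref{lem:at-most-2-solutions-pm-even}, any solution $y$ must satisfy $p\mid y^{2}$ and hence $p\mid y$; writing $y=pz$ yields $pz^{2}\equiv (k^{2}-4)/p\mod p^{m-1}$, which is impossible for $m\ge 2$ since the right-hand side is a unit mod $p$. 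Hence there are no solutions in this subcase, and in particular at most two.

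With Lemma \ref{lem:at-most-2-solutions-pm-even} thus upgraded, the conclusion of Theorem \ref{thm:k-p-markov-conj} follows verbatim: for $b=p^{m}$ apply Theorem \ref{thm:criterion-intro} directly, and for $b=2p^{m}$ combine the bound mod $p^{m}$ with the trivial observation that $x^{2}+kx+1$ has at most one root mod $2$, via the Chinese remainder theorem. The only genuine obstacle is the subcase $v_{p}(k^{2}-4)=1$: here the proof of Lemma \ref{lem:at-most-2-solutions-pm-even} breaks down because $p$ can legitimately divide both $x_{1}-x_{2}$ and $x_{1}+x_{2}+k$, so one must abandon the residue-identity argument and use the completing-the-square/Hensel viewpoint above, which exploits $m\ge 2$ to kill all solutions.
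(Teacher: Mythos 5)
Your Step 1 only yields $p^2\nmid k^2-4$, which is strictly weaker than the Proposition's actual conclusion $p\nmid k^2-4$, and this is not a presentational quibble: the implication you would need as a bare statement about $k$ and $p$ is false. Take $k=2p-2$, so $\frac{k}{2}+1=p$ and $\frac{k}{2}-1=p-2$, neither divisible by $p^2$, yet $k^2-4=4p(p-2)\equiv 0\bmod p$. The Proposition is nevertheless true because it carries the standing hypothesis of Theorem \ref{thm:k-p-markov-conj} that $p^m$ or $2p^m$ (with $m\ge 2$) is a $k$-generalized Markov number, and the paper's proof uses exactly this input: it argues by contraposition, takes the $k$-generalized Cohn matrix $C$ with $(1,2)$-entry $p^m$, and from $\det C=1$ together with $k\equiv -2\bmod p$ first gets $(a-1)^2\equiv 0\bmod p$ for the $(1,1)$-entry $a$, and then, reading $\det C\equiv 1\bmod p^2$, concludes that $p^2$ divides $\frac{k}{2}+1$ (resp.\ $\frac{k}{2}-1$, $k\pm 2$). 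Hence if $p\mid k^2-4$ then conditions (1)/(2) fail. Your proposal never invokes the existence of the Markov number, so it cannot and does not prove the statement as written; that is the missing idea.

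That said, the content of your reroute is correct and would deliver the downstream Theorem \ref{thm:intro2}. The valuation computation $v_p(k^2-4)=\max\{v_p(k-2),v_p(k+2)\}\le 1$ (and its analogue with $\frac{k}{2}\pm1$) is right, and completing the square correctly shows that for $m\ge 2$ the congruence $x^2+kx+1\equiv 0\bmod p^m$ has at most two solutions when $p\nmid k^2-4$ and no solutions at all when $v_p(k^2-4)=1$; this is an honest strengthening of Lemma \ref{lem:at-most-2-solutions-pm-even} obtained with no Cohn-matrix machinery. In fact you are one observation away from the literal Proposition: by Lemma \ref{lem:solution-ut-ut'} the characteristic number $u_t$ of $b=p^m$ (or of $2p^m$) is a solution of $x^2+kx+1\equiv 0\bmod p^m$, so your ``no solutions'' conclusion in the case $v_p(k^2-4)=1$ shows that no such $k$-generalized Markov number can exist then, and the Proposition follows by contradiction. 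Either add that citation, or restate the Proposition and Theorem \ref{thm:k-p-markov-conj} in terms of the weaker hypothesis $p^2\nmid k^2-4$; as it stands the proposal proves a different (and, in isolation, unprovable-as-stated) claim.
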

\begin{proof}
We will prove the case where $b=p^m$ (the case $b=2p^m$ is shown in exactly the same way). First, we assume that $k$ is even and $k\geq 4$. We will prove that $k^2-4\equiv 0 \mod p$ implies $\dfrac{k}{2}+1\equiv 0 \mod p^2$ or $\dfrac{k}{2}-1\equiv 0 \mod p^2$. Since $p$ is odd, we have $\dfrac{k}{2}+1\equiv 0 \mod p$ or $\dfrac{k}{2}-1\equiv 0 \mod p$. If $\dfrac{k}{2}+ 1\equiv 0 \mod p$, there exists $n\in \mathbb Z$ such that $\dfrac{k}{2}+ 1=np$. We consider the $k$-generalized Cohn matrix associated with $p^m$, $C=\begin{bmatrix}
    a &p^m \\c& (3+3k)p^m-a-k
\end{bmatrix}$. Since $\det C=1$, we have $a(-a-k)\equiv 1 \mod p$. Since $k=2np-2$, we have $-a^2- 2a\equiv 1 \mod p$. Therefore, we have $(a- 1)^2\equiv 0 \mod p$, and $a- 1\equiv 0 \mod p$. We denote by $a=n'p+ 1$. By calculating $\det C\mod p^2$, we have
\[1 \equiv -a(a+k)\equiv -(n'p+ 1)(n'p+2(np- 1)+ 1)\equiv- 2np+1 \mod p^2.\]
Hence we have $2n\equiv 0 \mod p$, and thus $n\equiv 0 \mod p$. Therefore, we have $\dfrac{k}{2}+1\equiv 0 \mod p^2$. Similarly, if $\dfrac{k}{2}-1\equiv 0 \mod p$, we have $\dfrac{k}{2}-1\equiv 0 \mod p^2$. We can prove (2) in the same way. 
\end{proof}
By using these argument, we prove Theorem \ref{thm:intro2}. The method of proof differs between (1) and (2)--(3). Let us start with (1). We use the following proposition.

\begin{proposition}[\cite{gyomatsu}*{Theorem 2.9}]\label{squre-markov}
If $(a,b,c)$ is a Markov triple, then $(a^2,b^2,c^2)$ is a $2$-generalized Markov triple. Conversely, if $(A,B,C)$ is a $2$-generalized Markov triple, then $(\sqrt{A},\sqrt{B},\sqrt{C})$ is a Markov triple.    
\end{proposition}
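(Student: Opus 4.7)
\medskip\noindent\textbf{Proof proposal.} The plan is to split the biconditional and treat the forward direction as a one-line algebraic identity and the backward direction as a tree induction that is needed to force the square-root operation to land in the integers.

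For the forward direction, I would evaluate the $2$-generalized Markov equation at $(a^2,b^2,c^2)$. It reads
\[a^4+b^4+c^4+2(b^2c^2+c^2a^2+a^2b^2) \;=\; 9\,a^2b^2c^2,\]
and the left hand side factors as $(a^2+b^2+c^2)^2$. Under the hypothesis $a^2+b^2+c^2 = 3abc$ the right hand side equals $(3abc)^2$, so the identity holds and positivity is inherited.

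The backward direction is the subtle one: from the equation alone one only reads off $(A+B+C)^2 = 9ABC$, which tells us $ABC$ is a perfect square but not that each individual entry is. My plan is to invoke Proposition \ref{prop:all-markov} with $k=2$, so that every $2$-generalized Markov triple sits as a vertex of the wide tree $\mathrm{WM}\mathbb T(2)$, and then prove by induction on this tree the stronger statement that every vertex has the form $(\alpha^2,\beta^2,\gamma^2)$ for some classical Markov triple $(\alpha,\beta,\gamma)$. The base case $(1,1,1)=(1^2,1^2,1^2)$ is trivial, with associated Markov triple $(1,1,1)$. For the inductive step, the new entry produced by the left Vieta jumping at $(\alpha^2,\beta^2,\gamma^2)$ is
\[\frac{(\alpha^2)^2+2\alpha^2\beta^2+(\beta^2)^2}{\gamma^2} \;=\; \frac{(\alpha^2+\beta^2)^2}{\gamma^2},\]
and the classical Markov relation rewrites $\alpha^2+\beta^2 = \gamma(3\alpha\beta-\gamma)$, turning this quotient into $(3\alpha\beta-\gamma)^2$. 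Since $3\alpha\beta-\gamma$ is exactly the corresponding Vieta jump of $(\alpha,\beta,\gamma)$ in the classical Markov tree, the new $2$-generalized Markov triple is the square of a classical Markov triple, and the right jumping is symmetric. Proposition \ref{prop:all-markov} then completes the argument for every $2$-generalized Markov triple.

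The main obstacle is the conceptual gap in the backward direction: the factorization $(A+B+C)^2 = 9ABC$ is tantalizing but does not by itself force each entry to be a perfect square, so one really must pass through the tree structure. Once that inductive framework is in place, the single computation $(\alpha^2+\beta^2)/\gamma = 3\alpha\beta - \gamma$ simultaneously produces the integrality of the square root and identifies it with the classical Vieta jump, closing the induction cleanly.
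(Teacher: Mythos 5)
Your proof is correct. Note that the paper itself gives no argument for this proposition --- it is imported verbatim from \cite{gyomatsu}*{Theorem 2.9} --- so there is no internal proof to compare against; judged on its own, your two-part argument (the identity $(a^2+b^2+c^2)^2=9a^2b^2c^2$ for the forward direction, and induction along $\mathrm{WM}\mathbb{T}(2)$ via Proposition \ref{prop:all-markov} for the converse) is sound, and the key computation $(\alpha^2+\beta^2)^2/\gamma^2=(3\alpha\beta-\gamma)^2$ correctly identifies the $2$-generalized Vieta jump with the square of the classical one. One remark: your closing claim that ``one really must pass through the tree structure'' is overstated. The equation $\mathrm{GME}(2)$ reads $(A+B+C)^2=9ABC$, so $ABC$ is a perfect square, and Proposition \ref{relatively-prime} makes $A,B,C$ pairwise coprime; a prime dividing $A$ then divides neither $B$ nor $C$, so its exponent in $ABC$ equals its exponent in $A$ and is even, whence each of $A,B,C$ is individually a perfect square. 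Taking positive square roots, $A+B+C=3\sqrt{ABC}$ becomes $(\sqrt{A})^2+(\sqrt{B})^2+(\sqrt{C})^2=3\sqrt{A}\sqrt{B}\sqrt{C}$, which is the Markov equation. This shortcut avoids the tree induction entirely, though your inductive route has the side benefit of exhibiting the isomorphism between the two Markov trees.
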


\begin{proof}[Proof of Theorem \ref{thm:intro2} (1)]
By Proposition \ref{squre-markov}, we have $\sqrt{b}=p^{m'}$ and it is a Markov number. By Theorem \ref{cor:markov-p^m}, there is a unique Markov triple $(a',\sqrt{b},c')$ such that $\sqrt{b}\geq \max\{a',c'\}$.  By Proposition \ref{squre-markov} again, $({a'}^2,b,{c'}^2)$ is the unique $2$-generalized Markov triple such that $b\geq \max\{{a'}^2,{c'}^2\}$.
\end{proof}

\begin{remark}
By Proposition \ref{squre-markov}, any $2$-generalized Markov number cannot be of the from $2p^m$, where $p$ is an odd prime.
\end{remark}

\begin{proof}[Proof of Theorem \ref{thm:intro2} (2) -- (3)]
The cases where $b=1$ and $k+2$ are clear. We assume that $b=1$ or $k+2$. First, we will prove the case where $p$ is an odd prime. Let $b$ be a $k$-generalized Markov number of the form $b=p^m$ or $2p^m$. By the assumption and Proposition $\ref{prop:sufficient-condition-p^m}$, we have $k^2-4\not\equiv 0\mod p$ for all odd prime $p$. Therefore, the statement follows from Theorem \ref{thm:k-p-markov-conj}. Second, we will prove that there is no $k$-generalized Markov number $b$ of the form $b=2^m$ under the conditions (2) or (3) in Theorem \ref{thm:intro2}. We assume that there is a $k$-generalized Markov number $b$ of the form $b=2^m$. By Proposition \ref{prop:4-Markov}, we have $k=4n+2$. Then, either $\dfrac{k}{2}-1$ or $\dfrac{k}{2}+1$ is divided by $4$. Therefore, $k$ satisfies neither (2) nor (3), and thus there is no $k$-generalized Markov number $b$ of the form $b=2^m$.
\end{proof}

\begin{remark}
Although Theorem \ref{thm:k-p-markov-conj} do not cover all $k$ and $p$, we have yet to find examples of $k$-generalized Markov numbers $p^m$ (or $2p^m$) to which Theorem \ref{thm:criterion-intro} cannot be applied, i.e., $k$ and $p$ satisfy that $p^m \neq1$ or $k+2$, and the equation $x^2+kx+1 \equiv 0 \mod p^m$ has more than two solutions. Even after removing the last condition, we have yet to find such an example. However, it is easy to find an example where the first condition is removed. Indeed, when we set $k=7$, $p=3$ and $m=2$ (note that $k^2-4\equiv 0 \mod 3$ holds), $9=3^2$ is the second smallest $7$-generalized Markov number, and the equation $x^2+7x+1\equiv 0 \mod 9$ has three solutions $x=1,4$, and $7$.
\end{remark}

From discussion in this section, the following theorem for general $k$-generalized Markov numbers, which is a generalization of \cite{aig}*{Theorem 3.19}, is proved.

\begin{theorem}
We assume that $k$ satisfies either of the following two conditions is satisfied:
\begin{itemize}
    \item [(1)] $k\geq 4$ is even, and both $\dfrac{k}{2}+1$ and $\dfrac{k}{2}-1$ are not divided by a squared prime,
    \item [(2)] $k$ is odd, and both $k+2$ and $k-2$ are not divided by a squared prime.
\end{itemize}
For a $k$-generalized Markov number $b=p_1^{a_1}\cdots p_n^{a_n}$ or $2p_1^{a_1}\cdots p_n^{a_n}$, where $p_i$ are odd primes, $b$ is the maximum of at most $2^{n-1}$ $k$-generalized Markov triples.
\end{theorem}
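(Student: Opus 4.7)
The plan is to bound the number of $k$-generalized Markov triples with maximum $b$ by counting solutions to $x^2+kx+1\equiv 0\pmod b$ in a specific interval, and then to exploit the multiplicative structure of $b$ together with an involution on the solution set.

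First, I would reduce the counting to a congruence problem using the characteristic number machinery from the previous subsection. By Proposition \ref{prop:equivalent-condition-each}, each triple with maximum $b$ (away from the trivial values $1$ and $k+2$) corresponds to an irreducible fraction $t\in(0,1)$ with $m_t=b$, and Lemmas \ref{lem:solution-ut-ut'} and \ref{lem:ut+k<mt/2} yield a characteristic number $u_t\in(0,b/2-k)$ satisfying $u_t^2+ku_t+1\equiv 0\pmod b$. Moreover, Lemma \ref{lem:index-of-Ct} together with Theorem \ref{thm:Cohn-distinct} implies the assignment $t\mapsto u_t$ is injective on these fractions. Hence the number of triples of interest is bounded above by the count $N$ of solutions to $x^2+kx+1\equiv 0\pmod b$ lying in $(0,b/2-k)$.

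Next, I would bound the total number $T$ of solutions to $x^2+kx+1\equiv 0\pmod b$. Under hypothesis (1) or (2), Proposition \ref{prop:sufficient-condition-p^m} shows that $k^2-4\not\equiv 0\pmod{p_i}$ for each odd prime $p_i$ dividing $b$, so Lemma \ref{lem:at-most-2-solutions-pm-even} gives at most two solutions modulo each $p_i^{a_i}$. For $b=2p_1^{a_1}\cdots p_n^{a_n}$, Lemma \ref{lem:even-Markov} forces $k$ to be even, and modulo $2$ the equation reduces to $x^2+1\equiv 0\pmod 2$ with the unique solution $x\equiv 1$. The Chinese Remainder Theorem then gives $T\leq 2^n$ in both cases. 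To halve this bound, I would introduce the involution $\sigma(u)=b-u-k\pmod b$, which preserves the solution set (a direct calculation yields $(b-u-k)^2+k(b-u-k)+1\equiv u^2+ku+1\pmod b$) and restricts to a bijection from $(0,b/2-k)$ onto the disjoint interval $(b/2,b-k)$. This yields $N\leq T/2\leq 2^{n-1}$, as required.

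The step requiring the most care is verifying that no fixed point of $\sigma$ contributes to $N$, so that the bijection genuinely halves the count. Fixed points satisfy $2u+k\equiv 0\pmod b$, whose representatives in $[0,b)$ are $u=(b-k)/2$ and, when $b$ and $k$ are both even, also $u=b-k/2$; both satisfy $u\geq b/2-k$ for $k\geq 0$, so they lie outside $(0,b/2-k)$. Assuming $n\geq 1$ (the small cases $b=1$ and $b=k+2$ are handled separately, with a unique triple each), the three steps above assemble into the desired bound $2^{n-1}$.
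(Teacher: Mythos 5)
Your proposal is correct and follows essentially the same route as the paper: reduce to counting characteristic numbers $u_t$ in $\left(0,\tfrac{b}{2}-k\right)$, bound the total number of solutions to $x^2+kx+1\equiv 0 \bmod b$ by $2^n$ via Proposition \ref{prop:sufficient-condition-p^m}, Lemma \ref{lem:at-most-2-solutions-pm-even} and the Chinese remainder theorem, and then halve the bound by pairing each $u_t$ with the distinct solution $b-(u_t+k)\geq \tfrac{b}{2}$. Your extra care with the involution's fixed points and the explicit appeal to Lemma \ref{lem:even-Markov} for the modulus $2$ are harmless refinements of the same argument.
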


\begin{proof}
We consider how many possible fractions $t$ which satisfies $b=m_t$. To do so, we consider how many possible values we take as $u_t$, which is uniquely determined from $t$. First, we prove the case where $b=p_1^{a_1}\cdots p_n^{a_n}$. The value $u_t$ is a solution to $x^2+kx+1\equiv 0$. When $u_t$ satisfies $x^2+kx+1\equiv 0 \mod b$, it must be a solution of
\begin{align}\label{eq:x2+kx+1}
    x^2+kx+1\equiv 0 \mod p_i^{a_i}.
\end{align}
for every $p_i^{a_i}$. By Proposition \ref{prop:sufficient-condition-p^m}, for each $p_i^{a_i}$, the number of the solutions to \eqref{eq:x2+kx+1} is at most $2$. Since any two elements in $\{p_i^{m_i}\mid 1\leq i \leq n \}$ are relatively prime, by using Chinese reminder theorem, the number of solutions to $x^2+kx+1\equiv 0 \mod a$ is at most $2^n$. Of these solutions, at most $2^{n-1}$ solutions can be $u_t$ for some $t$. Otherwise, since $0<u_t<\dfrac{b}{2}-k$ and $b-(u_t+k)$ is also a solution to this equation, the number of solutions is more than $2^n$, which is a contradiction. 
Next, we will prove the case where $b=2p_1^{a_1}\cdots p_n^{a_n}$. Since the number of solutions to $x^2+kx+1\equiv 0 \mod 2$ is at most $1$, we can apply the same argument to the case $b=2p_1^{a_1}\cdots p_n^{a_n}$, and we have the conclusion.
\end{proof}

\bibliography{myrefs}
\clearpage
\appendix

\section{Lists of $k$-generalized Markov primes}

In this section, we enumerate the $k$-generalized Markov primes ($0 \leq k \leq 10$) that appear up to a depth of $10$ in the $\mathrm{LM}\mathbb T(k)$. The numbers enumerated here are those for which the uniqueness claimed in Conjecture \ref{conj:intro} holds according to Theorem \ref{thm:intro1}. Note that there may be $k$-generalized Markov primes that are smaller than numbers listed here because they are listed based on the depth of the tree.

\subsection{Case $k=0$}
There are 93 primes:

\seqsplit{$2, 5, 13, 29, 89, 233, 433, 1597, 2897, 5741, 7561, 28657, 33461, 43261, 96557, 426389, 1686049, 2922509, 3276509, 94418953, 321534781, 780291637, 1405695061, 19577194573, 25209506681, 128367472469, 143367113573, 172765826641, 311809494089, 2156735837173, 4360711162037, 31801503090601, 65082055350517, 316141040381993, 1797110092720441, 14497376284172417, 28399179102482729, 578533194662405393, 5767133969508832793, 97504568602938767093, 154466196153475428721, 3440971837880006083249, 17395856662027256694321797, 40355056064972093419107929, 91015356306777369659693437, 533085545064611653385916913, 3879105994487597861648750461, 11194070542156175027972809049, 12054029854689213198649480981, 31460683183191329406107570801, 93751461575379756817272172381, 270689299528632194363285692481, 651037040799629773892401592669, 12060176252674248098532194336069, 85467224948730400032627299618761, 599382268853352715214171250771697, 2606748241724755004225913279393673, 4082965036857519237701078947153277, 11931901168369766271363018362344337, 72591090488871527694883271387948849, 17485099331241436500304831816208615281, 71210044871026329275614448782231080593, 6996290921227261944591864320795169235553, 64174516502056990396699103795678804361073, 106920930603408183157082125869544611090157, 132309744245891382873239125373876913978917, 339175151906227045659481064270409315843901, 15615032611674039955859653729414781291177369, 37349458047631454783499863189347080654745997, 3716978032399630769202590236649048950390728014581, 466463831368434062632205434427890123606348915018273, 1497558446260381055561779253217329246757271442016229, 3012556079232600154288430154868911188481795308527949, 1945327268227131127789435205775917645151277919681930897, 2230975534285313210500248197789776594751566147528904069, 29190590062277652784355900047193160395890797208632997321, 15570385232724936425730051894737678502197799346990156608853, 106059732403157220688956131123233603686462504201395343809489, 223929840970722367725917492466048835033113404578928902340386673, 35660549109371271086347556560348399005278358369265705732178637257, 89909513192602280332673987042609143951574863326894949802919531221, 842876321548713895984657551462364447935375460135941450935868293773, 17195104666415505256099786197676708148802374715467191954581493829257, 462917397243059184474589078657761984616576912490946077800313939432633, 20533586554896767742105267798245536890140842999496538138222513180472649, 41913311503433252417405967767749933154932311777084879770785589745717121, 905909108344395064949473397052656597831780438403253233570327680503236111617, 11738224931537333296826931392405109715208951462845467660849885653556881061821, 33292141791404062267828738366793808192735303758677040659247252794628242061962162443149, 1350764071826610020640080924389722341483668796882562192645029793702857878961989087708157680721, 12542437302638098937341986562352353827202706364758443588529501994615775387815169753499594832077, 214683900183268720453485241969355139457627974763749675352266426469238282044604174730124750631489, 8233334979901891564928327532295554548294060273600257579348218922783341118390772077687903550935885447957709$.}

\subsection{Case $k=1$}
There are 67 primes:

\seqsplit{$3, 13, 61, 3673, 6673, 16693, 17857153, 27755113, 98938381, 185236633, 623860921, 20311628317, 28254272353, 86821473241, 83112623451313, 564049807419121, 3894587461664461, 61849362653982373, 217856088794091661, 3287632262847072013, 37912116519566946373, 766203028625944475790901, 23170769381747812007684353, 22812698952914776091748541501, 827935911697332396024189326317, 30483037562863384451065047559021, 59009839393143501649791937036813, 120721674464536452650143481200453, 145735708086950863798749935681461, 840926199223252533755094242157733, 849438127374094683265801105621287313, 2015366550264529819664164592842115893, 121024140926633322069833840477628268693, 2518692207195818262688340926387330650817, 10143130823942955256815901336229999647117, 121957236149326664605129521779470009256891701, 193829254218146424737157234159682106264422633, 771512134800040253792125383449688902344033033, 19633102255850612118898006127453014126085091001, 193997022379754903755415623347186524916309778693, 19359828957294756320648368469516102213658178454233, 327974301007223689343603666797846023090988563057853, 1130807625121570102116749960860818560416855384971966450961, 353058645520666328397179027475753264177546571004029887787490300581, 2615049374936460986777783535735221859028508259955251156407911566241, 9728883891459635097059635206656590745199723300635561468715856427473, 1063129637059950061235762358253138375500439065244025089114233794814613, 233610756916655207033143103417044271404129789658043248338979615959471601881, 15882026709870880781368082244488486356149541769050653584426418853677999078361, 20991527316744332999111350552680275994289588850479959284043849055106070820794817, 8130160827355932962559674308683999904846222602271940979439553292370962041938387573, 161861153961039729283702046451055004531986165801121979809443829144387980541716393186981, 263159576828068690175580535968930734282001519034734532787271603355296170351817337538017, 3734016832699647083793454602019710555159813353570969668479607206383956887989856712378650040741, 5166975315809973754552701843174483272556098927461883066399909556390450184866834211101183853933, 481298111077478663759280317851529524243105866451466841509897946131649142148873486540915655777801833881, 3547281512182831939231966119336114185231372139368673345726791915128570380172213357887958815834522687817, 66729129037325378335607241870533230855452923956989978351254937079336332058508715539826709190183361465732733, 205053415463219096559588445093793555123093488768659682312855062500994479862782753498116760412209745493110313, 118576186291566948294413337618903206778796752985580103344393773126589330456230884351935676228580100492637535321, 12317013421697130732960044748363636971547393734327540612627431496885351891821839277002209872778713135135044536847817, 101331613752390758853213670372430504968148647227408747727354970957281002609862163495960789066294723508144910080699694504003141, 37829624703839813073620708477828082720274660741472588158745825620898374156298324345914179527608879496073572439403422323648217830401, 2715094970817392625900570750421715201810932677694548141033118527738077158960547264330102711064054660154662068664060165686312157815258101, 2631451708831244181535264472505556965038822231396386529119589031855506648894277428071622913902287376205571729492823130431686672195756410992713393, 75616070543618660628128422170938269252651907213852265112848842983601764310837573215180449498186774407644449555003744749887877648074747108703605875230899163563342616317, 121399365373028214733145359431958562592007576954337453319898796323804496122719138533038704964029988452590188813965196871833549098030934455415250998615394765855655691266984611110804289761$.}

\subsection{Case $k=2$}
There is no prime, because all $2$-generalized Markov numbers are  squared numbers by Proposition \ref{squre-markov}.
\subsection{Case $k=3$}
There are 55 primes:

\seqsplit{$5, 41, 132241, 1134841, 7535401, 13873361, 19993321, 763656121, 42121344479761, 132695455574081, 4527015394166801, 4932254562930009641, 14699217590644688641, 20755070347480916201, 31730004640863679441, 3422006887000239268361, 14570010547223517780881, 79543321509059500635241, 189019839694690565532721, 1329173939074630469901688441, 19010036255861670966386724841, 30692675398583160314177328721, 7205807494050882300850852392241, 134424247566495135036068168585665001, 155158171766624512155439036493202761, 5284970879544248172251163711896269601, 194277101167156560711701525279306786921, 2581508932687984062688611015646035624324858227028281, 14861100544329388769153461768272437963984475470324230441, 846355959868352264897744309134218504610471340831973741121, 48340362591941914518817465176545190988883830106144268910921, 6537000291411228665322125478055883762958543142038845054292281, 17008128064240957217843347171516137795623493601364086647613201, 20211426879777741065690400475493317623483623291896660706290783876032881, 5690120256185911511459737131999334612593203511084445324935488230907296361, 957334822550527436504664727538069381023867877780851634895785375376489795721, 4827534988509465862606716005740268795598621302699586340677171697261316520721, 5973111340089975369011469532450932145800374682508865833338155475493478046721, 2875426248842934402970068746767028376569807652388880135149419970386165757511914561, 84820387896790891037035226222513456977270547955161181411192771461785114221187449281, 99753091036157316405702765092144940564019855976965864406462416030706925107200129961, 157705665658865858933072502608351900458677789635820740568251900179205284586683711786403521, 9010308254222215389177144286743394954873952260732999442912467659346889595283003614163112801, 3179369384000130783418864048262853383287937326418177746076517481190700524147004742584304994161, 116132621917776897082130535636186905291504702765976680998921499074069028893583193140474177604042497813561, 80959683411438137642180286616701940397908061653548803805276446347312278327605256498071029168634686158757241, 4493246392893824588532363276777023671074464588968949360502528496965771447954617993985920938826380795946752757783681, 21537067415432173162122195771887550443549384024931947953951436492245175241994752095831042939358785282687481117338593721, 6879760782001754156792403800230800049989744186764443929433319354378103857915859171696125972215966430717252727503769044576547961, 30252636620599077256344741300796106299754008280442880790397668552760450299391537948585035643914555368311828604161886580531195921, 2051609253237628688935668086874640281673208655211498198889233924481001749477253013375597499420622998647244451974519285280587316147426499329013303807601, 5377902423230703545161823865806857962350964248286341833120643280762528225172193008854294460284319274971107608276811672340289423227004624262628924940597761, 169412759681254318878406746439378769275180670205496126140491759852600420660108966965601305853165344577118502636263660275661479508339469673988193613478711393072567271937487381587274700530703241, 640497668824197831506462696355191209749402212037398909861919971604115840485455328090692686010334325725961577488101896785314988012258277915496593426710938925181512777771116399156323120817261881, 13942308088390741003916092884940303203610277395866240115850959101068510900915446253804484091125016618760268601091464941090380256498182436837250188930753159589170831054890211562591933024965206689743239168054793817721168443697793719701067430255882873708265396927090178531081$.}

\subsection{Case $k=4$}
There are 49 primes:

\seqsplit{
$61, 661, 71416021, 3319319701, 285422892301, 3947161764661, 181471304078560621, 309106080799634401, 2284112692621065162121, 16875249251984559827821, 24114712263690978573661, 21904055646392477274305881, 2606398492100174424409269481, 162951365551357498981428235921, 33797350886766706961921737909561, 30789349558523342488221584461149541, 7315936264990635956265931636951300081, 113870785728358383131842809349710274321, 870109359550086539866076332188880085521, 1774340296173487361644775870454011170621, 25552666196141012523641537199087319251541, 829352118716392386506703475855734136412954221, 46596115655825025788868272288520692903844809701, 443283599519521041764216752230880751537966183281, 2758868598942603615708939514534710746328855592321, 323089880042361055422107121860359190183007207608881, 9530346889386118538226323404314067646341159413103201, 26610577700834918432591545839165152125092080647409101, 1754083135745295450315309185894368641924760603810549561, 14241251148349048662592067647059531904383331090155223081, 6358989088449655721078330414310812712630584371009661244341803322881, 15182991857445199142037338107379514223839565982126230758823076112789421, 209970300245166602877719279719802295734728692439990654040401706957044081, 10750925408584767779726674429351297909755459031488107877886387162739919406662061, 3273451771030283883326617345006071638917267018702265967940193607831156949748893506661, 2065000982774926216030896131893585573148306718454351669086657789872330234199471001232181001, 96101031639322360633331968832560545922946407597774626090252685384129968142973484425977282737721, 2345637360265802968078362021625168968261911707427699072660358398395664437895028130638264377982255998402699468630188861, 7611526721504954901450049432442735998207543016329018607948014045159305191487146067929368093741745888101007334190932541, 11324608814419488429809201981846564953115112731647102199821180672949391792552398610940658106868133248448131359436949925481, 14559249293157774106033486071364225075126836661607597686272949997140731845363507590510288580829629570367852915767287489892192161, 131136009042828204457186861801880429873362182371457148263585999158250973969812223026928039380263585635721447069249909836885045519634461, 513505446812534948240238235704685606196014297970074839968186033371938719494525365902127050255569023121337838170703332918012853116703301, 3038044214094157129350137760066054383946601701749757891332596039014835408561599703332103805487583202574577691762095012184582090915407293684513863042596404661, 22657697993883587202614593303361576126451700615891099643787192412304106882584470610643007269789761989360438277604185037353212542749583276168929992822063083200333427061, 20389512460451220282419312201902755825880031087225117218282729671117163164248250589044548135699753694906739934562655071210589500332024104600833216677350423508253185241554212395961, 10510565334551669687959264868111511775763386931338644528473344536880756531681377335271855800971946390589439436273517051121415067393514464428353760936378118471070175177133266377534801, 7426085653561886728775381768278220021635044030378259168420534190441993035787359859281599323330907300785442415561292575966118585470473089400890572394582511855590028790011250177970544784262101, 155146934429033404929070597614589042366590520750954370277843703027706961374503709664993694314321517130144636556579995468499731050028577607260452260825803541814455782584245347546079787008727907986671331018852570661798163946715095461$.}

\subsection{Case $k=5$}
There are 38 primes:

\seqsplit{
$7, 1093, 1240009, 393723877, 36347842625557, 25428313050159271477, 49021781155904210053, 709117999268439721329097, 10505383338504940851075043333, 45797770193125248155660778581499097, 2251307000658876872062465044496730641, 98831383329340018615641396986919729889, 2870272305396763811568163812756426950859907175557, 2143384719302523478008329773599903339536562731307829, 119109791656211898671637700857068109796849120589873957994975635701, 10038574479963567248256348232525453273140627631105796165372770026962704777, 1492969571329450240929397186348089554787995336883452607032400072184957991629, 93843725652912811789995245912097442243105841861541314761251511797440390303024617, 229926861478786326195172575321269552512200964482724393305602000839726711589375634349, 32659566357126474655566313074863380212167892197956404313817392853093019366725972887644599754433, 37290774994995864379059399978028502737214629070740206938090768879288506955610532992163680876829, 6998911222606049572002910860292261182282477355382174004841529017970083562745675060410915677024498614493, 99482052218676922674979569582850475855393895854325040989390142028020614038847538045796296527514037243879221, 21678599102898627833171257483185906489352199306588321603601350742406906032364672504185012669205142195753411228364053006643902558773, 4691067995599748409314617482626031816931832792282451639961577547638542645694063958593451227601054250424889627698771787363389645591696339655601, 41530976128012158414909042923994501229739985239904450067595705767238060965068138931308699683126689532884139067811992402819039217737707042982469, 9387896741188621678335585969847099204296686948931677647839909616492834881850033115044834191832070178071335109843526596544749706479677388715295321, 85337041115008577323568999619196721984842311856993094133289106294055542568220670411621637103501519181258080027735953690062917557358833101915959437283681, 190672401990800609399104229082502195447531052617382680353858857275055743438365432501658657198767141047738450834469969455111903040446119631587558224174398466478441, 623997396539990779639638511366663448130044286918299679457673338046899908909907696610788604574980774779651335206846894006569075861447729102423187137459133434080812881, 9513241475814252516879406932561344126366005545024519830204943663357900679159189775276208038249070351778450090872700357894958985426058292174487482824665777026290684669, 4255837948001771143701527068971699143784607719047114960834190903369249977563008927463488179677717267194306328335967183121915926001060260293267500629747876168071909939889, 112514358369660080853730832339749379340352716102767475024156391333009897173395199894425452897430645160010975017651580395982236512806313059986637865754601703331982003894093, 47181464568837071005163836238316257299395991863980311612834869139456218886780951618706169967263738438783064317250032377085517469615625601451681258757225325768736979291656640706294801, 8022416362465114336475022070973907759364093851437711692989685996799978905608943925667662517087165148664375535176449143588053227406947759522587856672690014161349469885837379721067290504572190234157, 2613881758878352894527725171959461087445648708728418027817681531929209807713815292448744155255635877503115555379228688447292905833538898763777843345299116339489913801456161955685249715033078905507346232128526895860018632497877, 11314795977468686234217308821248110244481863042139420551663335477298931123503251355128636348705288566731273278375509190366785580524395990116214089618439206895231359902153384130956109268926757645991168731628137447635427087280827217, 301042428958379702595908050567901607917413719952894923581391400344105056026247304231185880590992567819228391229400750270964857036795053784252722367239863621031616395425957350817526926973353833843601586824223712093744514274688677214474563911629$.}

\subsection{Case $k=6$}
There are 37 primes:

\seqsplit{$113, 18257, 374753, 5596193, 885753233, 4955850464270161, 9820931236845233, 1154156300537819796740657, 63898167192827714492865224113, 155564470773008716820324640193, 138353516272962924749127088203019057921, 43089038055293512596755774914595638725473, 795191433127050979125774873954814176797976476099201, 93152111658726668879535480913756384350031051325764433, 3287460207282071037743081795265582606508040347151635357873, 37481840673708171705184821348361399689807608347287424279361, 633211386559733098097681563368632389943733184653600959410738940257, 100496149799768892607521734391164942612764444202924495120686605885779761, 43840384560370793177652982765446950095318647923814508751674346878701453473, 1332735494780827302651820055062177814104796359492446069710513287101212489114100593, 3786804226017567744400589141944095484865257574063819779313423404002078107029370353, 35684076109195087163719393677294180135737407471687558599877071945526736737707177761, 906482085939642768342250072857333998981449494947888948245977295839280774686618932353, 162267602402463731048968705450789093767454414450902938518947043272419699855068741447163506799713, 1476835922590919487014373386552703820239229055514698226148755301443908879097004268024462354491681, 137608669872968883222891803939747429901981719200715793670685830143024557684523741571943810795099201, 1481340873656942519525251238812427655909233932821144723819204596482051345329621012561782356048994516735270515388593, 14435038384748401710632344600244180644281173367732091591183583374713932030514117650892973106658943918957265045992913, 594352984254345029434401876185806316305172827511079271117707210502115633990888967031970139524723398173004258603749210345854775415492315872346033, 1978615748251759320172745071520740320392181370285929982664339700308747132196575664705845140824883787762238630047732631559534429328443103380502708993, 4704402369850066222630902576603550211233476068279241862313293836271239844014180386485911964252458391292618802492144327529912492949765513635721743313, 1242052321370128766723469711653672994187178905315142551745594421399790904676980948521680829474504663710952772551719829277296865471532071324781009118433214411013601, 23496624000395106996763273682308984787081902112201344805056057878975122929967576133609248286356682063729114513349521615440451727512422451407180914949088813990678248677901873, 23534303767952284525582527231110605368547802366974141545819461241230885477731876005657743549958359031079114722823593060775313808905473158886326352276757001937610314358361841, 9303582081364951481392289946147193204007083844322953010780022213646911615619742098784132394094131297652370151194118414251350488142472698072537677721290226944132700323146263547748753, 64679228542839176209047979925233466851544330417513256271183844733340953938201566482187303876722096991252248790258295500173220693377853924841509685562279765520387917483639317128459181796374357261164881980473659451173857, 12557493212604297133236320601080271239162910271779631206109002640744233663796669950328719237073303508123399834803067736979222235046164755347489784351720865694142846828351115331458078334795181373808688820239229589694991806734509574768928673742053696388595999228906406237771129459693256113$.}
\subsection{Case $k=7$}
There are 34 primes:

\seqsplit{$30241, 702721, 57694863582721, 102010032773806579921, 526679216577303259681, 141989940019695654896719681, 1230416786573088512127276241, 11089676451675798292112944561, 80045809252608730426494463009, 8048631904828380013739097910787281, 16316805876747072873785422341151883041, 351494856030828988511200064186349107329, 179007503884080521694127354653108666143041, 14724250381754052649904670616127883079878832254241, 99096082093321616256481388492039326791955862025113242321, 50411997571320203064084532712594154054184436084394253806118801, 170357023849207923876304490325952812349404678553689835015549441, 11383960909610866271600300768459306940310856711534942075509338484369, 55348156431057608637312465770335383851188085029113766584660962895321601, 173150000297829845513223128453035746679278050936615607489517902448134234643921, 157839262527155536129932470495208996666427984204063196442350833400164594162769924058289, 156966375135846001782316999106517195841368618224386023029491556787729303716327532559746478321, 4046328140867378904522474713104382641044021023115522913797508765371470137519313385199137857243281, 15533286896596988116577019625510436945030438098949311522958154637079089602678720759283912508217305147569, 158195841050527223252103539010565879755880751337685702680162244549447175499856606110414482552673935293474744174904449, 78130128117418100615914611100607341351623106816264181737392068749967165489868738288574213716182541041141474705654126881, 76671887373173134984127669354213913926230306871066174198833556407871433022876945751366425006321043347724592558263401375978414267683352296961, 487138023162715232548182323028854703437855994852990289929980402653771162499619599929018285452088540051064340961906863065515303710600806870686692379158579215761, 72539931661237784442500844497774796191648233446903724515213615736034119815398965193789415042228412904171925573952036777926489596828790408779498972299115713365969601969355041, 15160492301244251922399649931558262857393969786039272056720983778208471183518112803948870376633583421262418802911484379133778194127225330527126617968051079244313949057522591921, 2252867388373413074776927657851317895097479974412114993368547519901688859609068601655452004739065807606767430708325381910503007995399770241957564333852254921676374111381044453421969, 2228022668387567854848734998470208097588268034798815500204635019530190998228041346056024342667812013906016060328289159064814294151277784903017906794557793228850848758778758458767127823207787441, 1423813980944735723731188096300266179585908574428860370109915149152806581431313910721088152036095266226288632713194471666924553309898621118399126845001235230514521314073555352659556210577969959908454182685841514135847307202129, 68020360883741028477642896340509220979418900861043121259640539744937336857536758600395347690334092540647435297234504629029583429503605754866025742599628454740832568856002028658175806066225712191036004021630513830228076477497701798668940282462726169928366690076955447649$.}
\subsection{Case $k=8$}
There are 31 primes:

\seqsplit{$181, 3249559801, 158258325061, 397290134033101, 276929486645133061, 15311029428785850481, 377365723705932607981, 4011431270446911644101, 11513295980125316029270921899601, 141044201578443975569621904996170820181, 494961847809536156417697614010237735842881, 1007294225371319934597015278349682132381999681, 14171482031869637002384876317074552084560703401, 389950210290320052529895482171169752084332373074263211781, 5558098036596028229882858624520277461090103019085783455341, 4583365969109021927254382441139020346781346020372499310169441, 8029418335675246917417991432534083813392476663464825114792601, 12765684492116849811410304274671263318624235166366677420744386161, 8962720541749085733106027135995731964503816304085682543942786003281, 13118404930381877860965287643975932031837884987165521348034599392743887201, 37566013422100879092560639803905621576195124578290914109506184582753391971347150813921, 2870499008599459484720738508672302081688572909679858800728769446406858691151645269813331326926041, 32418659288325516312904659769387966904026415926514734495282084414412631014496717407469954734863338714991664961948187692762301, 25472050249002558160041018212231225336020792166714370098350991515826280583801918690787400345113201709621564869157001853483853181, 277231758771168323326140161277451756636401390869563871187782907338728983975136380572503886119845552628801952838793647055117319161, 92834438727094573915231893872134439701989060523107143261840022788170090073956624817340616702162255021723614442515019642559926024707139321, 32461043539317133157768320766397161936616132953106246322622557589847669144071316775493613170991973504139493797243914385007261388765052343029729005727245460805970312211143381, 5695141303923086283298831607745023568834683585252077275255078032416455951132289181805394066343539627390236952828425715566944724537330258514972243528326552932860443324110093881, 193998731609040483900840064480489569884432482371794593063315850643934930598260077828183173610460004160428821491469661476180716211200190071082491204840823736286274968732160959248483133083291053919205355861, 1281356522219913344703509603286934772632140155886553155872412428202999460187800506924792296984575532521170211884273529978131287891830712891350681336660097649150940403091678104166737077992493566523535778800281501, 3076234617525375042689350830113352774687715940429339096829788216376386193784198992280747664878000149208401793504133871644459788535549471773297730802162269131113227402053071189467468475768680972686516563614701518029588114825345040216792164172796323149366445588514454166028728933150536749543863743838781151584803129679519681$.}

\subsection{Case $k=9$}
There are 28 primes:

\seqsplit{$11, 4621, 70841, 2028401, 2343069836801, 38975976320917241, 1135329926783699021, 17109723132002248441, 3892359005608909854701, 96052546721095804900993241, 3572444154497949862834106611901, 13314399592503207645856489362041, 25506556658438710899815879651128256995244858081, 8054156759464489310267162861697064345213874703241, 1978938227361189387606715729368467315261108499832089069926162441, 310127734139115986653155845843644218764647913954368682960310408428701, 56651306872590023356832232082317174787263664931908677191922276325619321, 2691801986081999360050957215729673905029940075013628315145533743633350424957854837174137816107241, 152333881666449835601659399090114961829394212468316979834928259362041742757039532015094254898866236081, 177991630730613823339945038861027831798262650712999726380238953502056859197473254642107780603482114531183945830621, 3282708628260426575200975762009332935769583633354329345989213666057987282249465165216894200346828805396046626625364565061, 916977607580023059604499735925596565740172185300058973049233358161520262358504397713676658533832351617540962846480002918379429095919861, 22850318428481058927969399434606593663667491635144024277599094003580007784976587082057183871307387136173982568587976588418382559451448802794484424617119690119560098626729241, 3732717805535234871260518154319691969396396620682418683826018050087593854885068393023528456017540293470552134064119255961216305675055492466407662483512076695240842431692191172500904914759882839358254198312821, 76323250622110415808629707848316867698047213838179251556826334879602441722797544769660545829947915926015163896540815263354205428701199254575730869094137278059752726400179473985562961698224938128188772842661196041, 4884683516827530257155227782519652122811062307001378378800209603306059241550284189903947336384319911461513555487857295277084073696282699627057838962301845569760168532618987270388313334025589113014136657733770349918845317595320820678535357611426800428041, 35254323970493133107297797340334494177157799900463130475886524229974413275837677252557688901193591795170199414171570938698103194612396312641935919955691934972426127434507675363684722147117687132942802421058848958739684315989724461303909147339922191860658043521801, 9241788870093407465014997814935911097382500814801756935098155836397728106099523334246705961157671066245303333668970942281507336829575189728974388395659681831145940519102981007476528112682865812171672935880162049179615438710777169135173291088496238581425805761265979732869038558465547260502887415047337494394227076074014738871043749588806147418294141$.}
\subsection{Case $k=10$}
There are 34 primes:

\seqsplit{$6073, 3200209, 39436849, 73465393, 594797955822739276609, 819345902930686790609017, 18809263461615454339806361, 45383216669219147480607900121, 227554455001506706798175042857, 1433280611499591293572532623398241, 32902980177887110883846983237124881, 17339766918259845356777724068277462241, 360282554452088386416776048087744319102890881, 1205235064120434495355696332830370101214365435477624209, 7673882656136090525784960977384891983719281971373981755597657, 367068996074533139730258249731622265010452407297423877732898304522529, 13750998738022336874108420784900894868951194113427768738514537217027527263649, 45098525326058596839639651276281185272842101030467870318406043701475264728410841, 539143871619374455241608403260760879542876365010925742421882150562256255805612417, 22317436995605961571460624751928517784805949217364234406612743132036910991847837580803214521, 2571384827667589499088172531121089472068117545879409788661684733909122892816154423460059041389681, 90870591827933884565488966062603181997242564647235938399628061567228324236586343220872179409083510569, 237807963215862295716846765445726004356163675042030430910475537687241528749496935915804467206603610969, 2940288472048293303011340533492406825386718561461937982930101466182747543610278204042884494270601176032362521, 12201162304165511029027527954971070808311128484800839953669397737218918927045523829296612146787435833240176801, 1110825469467883963679088800381450036772638036769208452581637325828296435275148665715253672469297247890890731398473, 458744691219808314718636499696859708611251671508333688136001193669412436002655028061841642113522651733188015401876331225579399263939999377, 19887613045081775812243727519341400151279529490376959279891695202086573105423265361352332207758995991609436794966211034028960898464772898750932649, 1206069534311008523125649733964828709206505457217630561891719047679201235062444203902448900330659879552059703095632150723685086521542044097313485872929, 7959329410930411703009106189765855857001555288122677784776211864749892142765411830944273255432985331418938677024051121575496585318658233547749561367061134617498637045936633, 399666652292583791352652053129273280084703006879938221389846662842065920123038727918143831038146670803834059731820813663116322408681987239184704770699345631776462442125674409935093407430001628272526564367637545417469989714286860031453664929, 3059863688866294045584019258260237232276653963621009037054943177658198447701374353913795907322059843985705186493839087376474419640810654603582058540133766352939194136234653216971256974313497857224134612981401776706578672035211108254594241676190773406864248377, 289686265808612212138345695567607468220776206958450748752887425091354344002662849253203651930720267564711327064657466567322353733378801381229235289455282278839283253119219859004436264440891772135236753042222341879009834977642325596963427509822986578223231140729541909836474085885489316770217, 1564184613890177668936330949806464343245747896473280431401200775868402933692237961552059951608664415068108516196798238862912780828465364061183782850246615886498738530986794251334506062155059236360254324325261844327219808756481083643273462645099596892892252923843225951745612313146196525664509338081268193$.}
\end{document}